\numberwithin{equation}{section}
 \def\Im{{\rm Im}}
 \DeclareMathOperator{\Ker}{Ker}  
  \DeclareMathOperator{\rk}{rk} \DeclareMathOperator{\Id}{Id} 
 \DeclareMathOperator{\tr}{Tr}
 \newtheorem{thm}{Theorem}[section] \newtheorem{lemma}[thm]{Lemma} \newtheorem{conj}[thm]{Conjecture} \newtheorem{prop}[thm]{Proposition}
  \theoremstyle{definition} \newtheorem{rem}[thm]{Remark} \theoremstyle{definition}
 \newtheorem{defn}[thm]{Definition} %
 \theoremstyle{remark} %
 \newcommand{\be}{\begin{eqnarray}}
 \newcommand{\comment}[1]{}
\begin{document}

 \title{On the gluing formula of real analytic torsion forms}
 \date{\today}
 \author{Jialin Zhu}
 \address{Chern Institute of Mathematics, Nankai University, Tianjin 300071, P.R. China}
 \email{jialinzhu@nankai.edu.cn}

\begin{abstract} In this paper we extend first the Bismut-Lott's analytic torsion form for flat vector bundles to the boundary case, then we establish its gluing formula on
 a smooth fibration under the assumption that a fiberwise Morse function exists.
We assume that the metrics have product structures near the cutting hypersurface.
\end{abstract}

\maketitle
\tableofcontents
\setcounter{section}{-1}

\section{Introduction}\label{s1}
 In 1935, Reidemeister, Franz and de Rham introduced what we now call Reidemeister-Franz torsion
  (RF-torsion) for certain finite
simplicial complexes
(cf. \cite{BisZh92},  \cite{Franz35}, \cite{Milnor66}, \cite{RaySing71}, \cite{Reid35}). RF-torsion is the first algebraic-topological
 invariant which can distinguish the homeomorphism types of homotopy-equivalent lens spaces known
 at that time (cf. \cite[$\S$12]{Milnor66}).
 As its analytic analogue, Ray
 and Singer \cite{RaySing71} introduced what we now call Ray-Singer analytic torsion associated to de Rham complex twisted by a flat vector bundle $F$ over a
compact oriented Riemannian
manifold $M$. They constructed this new analytic invariant in term of the zeta function for Hodge Laplacian, and they conjectured that RF-torsion and the analytic torsion should be equivalent for unitary representations.
This conjecture was proved by Cheeger \cite{Chg} and M\"{u}ller \cite{Mu78} independently by using different methods.
In 1992 Bismut and Zhang \cite{BisZh92} and M\"{u}ller \cite{Mu93} simultaneously considered its generalizations. M\"{u}ller extended
his result to the case where the dimension of the manifold is odd and only the metric induced on $\det{F}$ is required to be flat.
Bismut and Zhang generalized the original Cheeger-M\"{u}ller theorem to arbitrary flat vector bundles with arbitrary Hermitian
metrics. There are also various extensions to the equivariant case (cf. \cite{BisZh94}, \cite{LoRo91}, \cite{Luck93}). In particular, Bismut and Zhang \cite{BisZh94} extended their results in \cite{BisZh92} to the equivariant situation.

In \cite{BL}, Bismut and Lott constructed what we now call Bismut-Lott torsion form (BL-torsion)  for smooth fibrations
with compact
fiber as generalization of Ray-Singer analytic torsion, which will be discussed in detail later.
 Inspired by the work of Bismut and Lott,
Igusa
\cite{Igusa02} can finally complete the construction of higher RF-torsion by using the parameterized Morse theory.
 The reader refers to the books of Igusa \cite{Igusa02} and \cite{Igu05} for more information about the higher Igusa-Klein
torsion (IK-torsion). A second version of higher RF-torsion (DWW-torsion) was defined by Dwyer, Weiss and Williams \cite{DwWeiWill03} in the homotopy theoretical approach.
Bismut and Goette \cite{BGo} obtained a family version of the Bismut-Zhang Theorem under the assumption that there exists a fiberwise
Morse function for the fibration in question. Goette \cite{Goette01}, \cite{Goette03} did more work towards the precise relation on BL-torsion and IK-torsion. The survey
\cite{Goette08} of Goette gives an overview about these higher torsion invariants for families. The
reader can refer to \cite{BGo}, \cite{Bunke00} for the equivariant BL-torsion form and to \cite{BisMaZh} for the recent works on the analytic torsion form.

In Igusa's axiomatization of higher torsion invariants (cf. \cite[$\S$3]{Igu08}), he summarized two axioms: Additivity Axiom and Transfer Axiom,
 to characterize the higher torsions, up to an universal cohomology class depending only on the underlaying manifold. In \cite[$\S$5]{Igu08}, Igusa established the additivity formula and the
transfer formula for IK-torsion. Roughly speaking, the additivity
formula of IK-torsion corresponds to the gluing formula of BL-torsion, and the transfer formula of IK-torsion corresponds to
the functoriality of BL-torsion with respect to the composition of two submersions, which
has been established by Ma \cite{Ma02}.
The main results of Igusa in \cite{Igu08} were first developed and announced during
the conference \cite{Conf03} on the higher torsion invariants in G\"{o}ttingen in September 2003.
To study the gluing problem of BL-torsion was proposed as an
open problem during this conference in order to clarify the relation between BL-torsion and IK-torsion.
Once we have established the gluing formula for BL-torsion, then it will imply basically that there exist a constant
$c$ and a cohomology class $R\in H^{*}(S)$ such than $\tau_{\rm IK}=c\tau_{\rm BL}+R$,
when they are well-defined as cohomology classes. This is the main motivation of the present work.

L\"{u}ck \cite{Luck93} established the gluing formula for the analytic torsion for unitary flat vector bundles when the Riemannian metric has product structure near the boundary by using the results in \cite{LoRo91}.
 There are also other works on the gluing problem of the
 analytic torsion (cf. \cite{Has},  \cite{Vishik95}). Finally,
Br\"{u}ning and Ma \cite{BruMa06} established the anomaly formula of the analytic torsion on manifolds with boundary,
then they \cite{BruMa12} proved the gluing formula of analytic torsion for any flat vector bundles and without any assumptions on the product structures near the boundary.

Now let us state some of our results in detail. First, we recall some properties of Bismut-Lott torsion form. Let M be a smooth manifold without boundary. Let $\big(F, \nabla^{F}\big)$ be a flat complex vector
 bundle on $M$ of rank $\rk(F)$ with a flat connection $\nabla^{F}$, i.e., $(\nabla^{F})^{2}=0$. Let $h^{F}$ be a Hermitian metric
 on $F$. Put
\begin{align}\begin{aligned}\label{e.356}
\omega(F, h^{F})=(h^{F})^{-1}\nabla^{F}h^{F}\in \Omega^{1}(M,{\rm End}(F)),
\end{aligned}\end{align}
which is an $\text{End}(F)$-valued 1-form on $M$. For $k\in \mathbb{N}$, let
\begin{align}\begin{aligned}\label{e.356}
c_{2k+1}(F, h^{F})=(2i\pi)^{-k}2^{-(2k+1)}\tr[\omega^{2k+1}(F, h^{F})],
\end{aligned}\end{align}
then $c_{2k+1}(F, h^{F})$ is a real closed $(2k+1)-$form on $M$ (cf. \cite[(1.34)]{BL}). Its de Rham cohomology class $c_{2k+1}(F)\in H^{*}(M, \mathbb{R})$ does
not depend
 on $h^{F}$.

 Let $S$ be a compact smooth manifold. Let $\pi: M\rightarrow S$ be a smooth fibration (cf. \cite[Chapter 1]{BGV92}), whose standard fiber
$Z$ is a compact $m-$dimensional smooth manifold. Let $H^{p}(Z, F), 0\leq p\leq m,$ denote the complex vector bundle on $S$ whose
fiber is the cohomology group $H^{p}(Z_{b}, F)$ at $b\in S$. Then $H^{p}(Z, F)$ admits the canonical flat connection $\nabla^{H^{p}(Z, F)}$
 induced by $\nabla^{F}$ (cf. \cite[$\S$3\,(f)]{BL}).

Let $TZ$ be the vertical tangent bundle of $M$, $o(TZ)$ be its orientation bundle, a flat real line bundle on $M$, and $e(TZ)\in H^{m}(M, o(TZ))$
be the Euler class of $TZ$, then Bismut and Lott \cite{BL} have proved that for $k\in \mathbb{N}$
\begin{align}\begin{aligned}\label{e.357}
\sum_{p=0}^{m}(-1)^{p}c_{2k+1}(H^{p}(Z, F))=\int_{Z}e(TZ)\cdot c_{2k+1}(F) \quad \text{ in } H^{2k+1}(S, \mathbb{R}).
\end{aligned}\end{align}
One sees that (\ref{e.357}) is an analog of the Riemann-Roch-Grothendieck theorem for holomorphic submersions.

Equip the fibration with a horizontal distribution $T^{H}M$ such that $T^{H}M\oplus TZ=TM$ and a vertical Riemannian metric $g^{TZ}$,
 then in \cite[Def. 3.22]{BL} Bismut and Lott constructed a natural real $2k-$form $\mathscr{T}_{2k}(T^{H}M, g^{TZ}, h^{F})$, for $k\in\mathbb{N}$, on $S$ such that
\begin{align}\begin{aligned}\label{e.358}
 d(\mathscr{T}_{2k}(T^{H}M, g^{TZ}, h^{F}))=&\int_{Z}e(TZ, \nabla^{TZ})\cdot c_{2k+1}(F, h^{F}) \\
 &-\sum_{p=0}^{m}(-1)^{p}c_{2k+1}(H^{p}(Z, F), h^{H^{p}(Z, F)}).
\end{aligned}\end{align}
Here $e(TZ,\nabla^{TZ})\in \Omega^{m}(M,o(TZ))$ is the Euler form associated with the canonical connection $\nabla^{TZ}$ on $TZ$. We call $\mathscr{T}_{2k}(T^{H}M, g^{TZ}, h^{F})$ the Bismut-Lott analytic torsion forms. In \cite[Thm. 3.29]{BL}, they showed that
the $0-$form $\mathscr{T}_{0}(T^{H}M, g^{TZ}, h^{F})$ at $b\in S$
is equal to half of Ray-Singer analytic torsion of the fiber $Z_{b}$ with $F|_{Z_{b}}$, so it's a natural higher degree generalization of
the Ray-Singer analytic torsion. Bismut and Lott also showed that
under some appropriate conditions $\mathscr{T}_{2k}(T^{H}M,g^{TZ},h^{F})$ is closed and its de Rham cohomology class $\mathscr{T}_{2k}(M,F)$ is independent of the choices
of $T^{H}M$, $g^{TZ}$ and $h^{F}$ (cf. \cite[Cor. 3.25]{BL}), thus $\mathscr{T}_{2k}(M, F)\in H^{2k}(S, \mathbb{R})$ is a smooth invariant of the pair $(M\overset{\pi}\rightarrow S, F)$.\\

In this paper, we will consider the gluing problem of the Bismut-Lott torsion form.
We suppose that $X$ is a compact hypersurface in $M$ such that $M=M_{1}\cup_{X} M_{2}$ and $M_{1}, M_{2}$ are
 manifolds with the common boundary $X_{1}=X_{2}=X$. We also assume that
$$
Z_{1}\rightarrow M_{1}\overset{\pi}\rightarrow S, \quad Z_{2}\rightarrow M_{2}\overset{\pi}\rightarrow S, \quad \text{and } Y\rightarrow X\overset{\pi}\rightarrow S
$$
are all smooth fibrations with fiber $Z_{1, b}$, $Z_{2, b}$ and $Y_{b}$ at $b\in S$ such that
\begin{align}\begin{aligned}\label{e.359}
Z_{b}=Z_{1, b}\cup_{Y_{b}}Z_{2, b}.
\end{aligned}\end{align}
In other words, the fibrations $M_{1}$ and $M_{2}$ can be glued into $M$ along $X$ (cf. Figure \ref{fig.5}).

\begin{figure}
\input{fibration.pstex_t}
\caption[Figure]{\label{fig.5}}
\end{figure}\index{Figure \ref{fig.5}}

 Let $U_{\varepsilon}\simeq X\times (-\varepsilon, \varepsilon)$ be a product neighborhood of $X$ in $M$, and $\psi_{\varepsilon}:X\times (-\varepsilon, \varepsilon)\rightarrow X$ be the projection on the first factor.
We \textbf{assume} that $T^{H}M$ and $g^{TZ}$ have product structures on $U_{\varepsilon}$, i.e.,
\begin{align}\begin{aligned}\label{e.759}
(T^{H}M)|_{X}\subset TX,\quad\big(T^{H}M\big)|_{U_{\varepsilon}}=\psi_{\varepsilon}^{*}\big((T^{H}M)|_{X}\big),
\end{aligned}\end{align}
\begin{align}\begin{aligned}\label{e.360}
g^{TZ}|_{(x', x_{m})}=dx^{2}_{m}+g^{TY}(x'), \quad (x', x_{m})\in X\times (-\varepsilon, \varepsilon).
\end{aligned}\end{align}
Then $T^{H}X:=(T^{H}M)|_{X}$ gives a horizontal bundle of fibration $X$, such that $TX=T^{H}X\oplus TY$. We trivialize $F$ along $x_{m}$-direction, by using the parallel transport with respect to the flat connection $\nabla^{F}$, then we have
\begin{align}\begin{aligned}\label{e.760}
\quad (F, \nabla^{F})|_{X \times (-\varepsilon, \varepsilon)}= \psi^{*}_{\varepsilon}(F|_{X}, \nabla^{F}|_{X}).
\end{aligned}\end{align}
 We \textbf{assume} that under the identification (\ref{e.760}), we have
\begin{align}\begin{aligned}\label{e.361}
h^{F}|_{U_{\varepsilon}}= \psi^{*}_{\varepsilon}(h^{F}|_{X}).
\end{aligned}\end{align}
If $h^{F}$ is flat, i.e., $\nabla^{F}h^{F}=0$, then (\ref{e.361}) is a consequence of the flatness of $h^{F}$.\\

In all of this paper, we \textbf{assume} that the triple $(T^{H}M, g^{TZ}, h^{F})$ has the product structures on $U_{\varepsilon}$, i.e.,
\begin{align}\begin{aligned}\label{e.761}
\text{ (\ref{e.759}), (\ref{e.360}) and (\ref{e.361}) hold.}
\end{aligned}\end{align}
We impose the absolute boundary conditions on $(M_{1},X)$ and the relative boundary conditions on $(M_{2},X)$.
We denote the corresponding torsion forms by $\mathscr{T}_{\rm abs}(T^{H}M_{1}, g^{TZ_{1}}, h^{F})$ and $\mathscr{T}_{\rm rel}(T^{H}M_{2}, g^{TZ_{2}}, h^{F})$ respectively (see Def. \ref{d.6}).

Let $\big(H^{p}(Z_{1}, F),\, \nabla^{H^{p}(Z_{1}, F)}\big)$ denote the flat vector bundle on $S$ whose fiber is isomorphic to the absolute cohomology group
$H^{p}(Z_{1, b}, F)$ at $b\in S$. Let $\big(H^{p}(Z_{2}, Y, F),\, \nabla^{H^{p}(Z_{2}, Y, F)}\big)$ denote the flat vector bundle on $S$ whose fiber is isomorphic to the relative cohomology
group $H^{p}(Z_{2, b}, Y_{b}, F)$ at $b\in S$. Then we have a long exact sequence $(\mathscr{H}, \delta)$ of flat vector bundles of cohomology groups (cf. \cite[(0.16)]{BruMa12}), i.e.,
\begin{align}\begin{aligned}\label{e.434}
\cdots \longrightarrow H^{p}(Z, F)\overset{\delta}{\longrightarrow} H^{p}(Z_{1}, F) \overset{\delta}{\longrightarrow}
H^{p+1}(Z_{2}, Y, F) \overset{\delta}{\longrightarrow}\cdots.
\end{aligned}\end{align}
The $\mathbb{Z}-$grading of (\ref{e.434}) on $H^{p}(Z,F)$, $H^{p}(Z_{1},F)$ and $H^{p}(Z_{2},Y,F)$ are given by $3p+1$, $3p+2$ and $3p$ respectively with $0\leq p \leq m$. We denote the $L^{2}-$metric on $\mathscr{H}$ by $h^{\mathscr{H}}_{L^{2}}$
and the canonical flat connection by $\nabla^{\mathscr{H}}$.
Then we associate a torsion form $T_{f}(A^{\mathscr{H}}, h_{L^{2}}^{\mathscr{H}})$
to the triple $(\mathscr{H},  A^{\mathscr{H}}:=\delta+\nabla^{\mathscr{H}},h^{\mathscr{H}}_{L^{2}})$
(see Def. \ref{d.9}) for $f(x)=xe^{x^{2}}$, which verifies the following equation
\begin{align}\begin{aligned}\label{e.482}
dT_{f}(A^{\mathscr{H}}, h_{L^{2}}^{\mathscr{H}})=\sum_{p=0}^{m}(-1)^{p}&\left[f\big(\nabla^{H^{p}(Z_{2},Y,F)},h^{H^{p}(Z_{2},Y,F)}_{L^{2}}\big)\right.\\
&\left.-f\big(\nabla^{H^{p}(Z,F)},h^{H^{p}(Z,F)}_{L^{2}}\big)+f\big(\nabla^{H^{p}(Z_{1},F)},h^{H^{p}(Z_{1},F)}_{L^{2}}\big)\right].
\end{aligned}\end{align}

Let $Q^{S}$ be the vector space of real even forms on $S$ and $Q^{S, 0}$ be the vector space of real exact even forms on $S$.
Let $\chi(Y)$ be the Euler characteristic of $Y$.

We formulate a conjecture
about the general gluing formula of analytic torsion forms in order to answer the open problem proposed in the conference  \cite{Conf03}
on higher torsion
invariants at G\"{o}ttingen 2003.
\begin{conj}\label{c.1}
With the assumption of product structures (\ref{e.761}), the following identity holds in $Q^{S}/Q^{S, 0}$
\begin{align}\begin{aligned}\label{e.474}
\mathscr{T}(T^{H}M, g^{TZ}, h^{F})-\mathscr{T}_{\rm abs}(T^{H}M_{1}, g^{TZ_{1}}, h^{F})&-\mathscr{T}_{\rm rel}(T^{H}M_{2}, g^{TZ_{2}}, h^{F})\\
&=\frac{\log 2}{2}\rk(F)\chi(Y)+T_{f}(A^{\mathscr{H}}, h_{L^{2}}^{\mathscr{H}}).
\end{aligned}\end{align}
\end{conj}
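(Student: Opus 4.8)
We establish \eqref{e.474} under the additional hypothesis that $\pi\colon M\to S$ admits a fiberwise Morse function $\mu$ adapted to the decomposition, i.e.\ $X=\mu^{-1}(c)$ is a regular fiberwise level set containing no fiberwise critical point, $M_1=\mu^{-1}((-\infty,c])$ and $M_2=\mu^{-1}([c,\infty))$, and the fiberwise gradient of $\mu$ — for a metric coinciding with $g^{TZ}$ and the product structure \eqref{e.761} on $U_\varepsilon$ and $\mu$-adapted away from it — satisfies the Thom--Smale transversality condition in families. The plan is to push the three analytic torsion forms down to the fiberwise Thom--Smale complex, to glue there by an algebraic multiplicativity property, and to reduce the whole statement to one local identity concentrated in a collar of $Y$. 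For the closed fibration $Z\to M\to S$ I would invoke the families Cheeger--M\"uller theorem of Bismut--Goette \cite{BGo}, which writes $\mathscr{T}(T^HM,g^{TZ},h^F)$ as the torsion form $\mathscr{T}(\mathscr{C}^\bullet(\mu),h^{\mathscr{C}})$ of the fiberwise Thom--Smale complex — the complex of flat vector bundles over $S$ generated by the fiberwise critical points of $\mu$, with its natural metric twisted by $h^F$ — corrected by $\widetilde f$-transgression classes comparing $h^{\mathscr{C}}$ with the $L^2$-metric on $H^\bullet(Z,F)$, plus an explicit local term $\mathscr{V}(M)=\int_Z\alpha$, the fiber integral of a form built from $\nabla^{TZ}$, $h^F$ and the Mathai--Quillen current of $-\nabla\mu$. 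For $(M_1,X)$ with absolute and $(M_2,X)$ with relative boundary conditions I would use the corresponding boundary versions: since the geometry is a product on $U_\varepsilon$, these either follow from a boundary analogue of \cite{BGo} — proved together with the extension of the torsion form to the boundary case that is already part of this paper — or reduce to the closed case by doubling $\widehat M_i=M_i\cup_X M_i$ and descending along the $\Z/2$-symmetry which separates the absolute and relative subcomplexes. In these boundary versions the local term is the fiber integral over $Z_1$ resp.\ $Z_2$ of the same form, \emph{plus} a boundary contribution which is a fiber integral over $Y$ determined entirely by the product geometry on $U_\varepsilon$.

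Next I would glue on the combinatorial side. Since $X$ contains no fiberwise critical point and $\mu$ strictly decreases along the nonconstant trajectories of $-\nabla\mu$, the fiberwise critical points lying in $\{\mu>c\}$ span a subcomplex $\mathscr{C}^\bullet_{\mathrm{rel}}(M_2)$ of $\mathscr{C}^\bullet(\mu)$, and one obtains a short exact sequence of complexes of flat vector bundles over $S$
\[
0\longrightarrow\mathscr{C}^\bullet_{\mathrm{rel}}(M_2)\longrightarrow\mathscr{C}^\bullet(\mu)\longrightarrow\mathscr{C}^\bullet_{\mathrm{abs}}(M_1)\longrightarrow0,
\]
which splits orthogonally as graded bundles and whose long exact sequence of cohomology bundles is exactly $(\mathscr{H},\delta)$ of \eqref{e.434} — the long exact sequence of the fiber pair $(Z,Z_1)$ together with the excision isomorphism $H^\bullet(Z,Z_1,F)\simeq H^\bullet(Z_2,Y,F)$, carrying the $\Z$-grading specified after \eqref{e.434}. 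The multiplicativity of the finite-dimensional torsion forms under such a short exact sequence (the families analogue of Milnor's additivity of torsion) then gives, modulo $Q^{S,0}$,
\[
\mathscr{T}(\mathscr{C}^\bullet(\mu),h^{\mathscr{C}})=\mathscr{T}(\mathscr{C}^\bullet_{\mathrm{abs}}(M_1),h^{\mathscr{C}})+\mathscr{T}(\mathscr{C}^\bullet_{\mathrm{rel}}(M_2),h^{\mathscr{C}})+T_f\big(A^{\mathscr{H}},h^{\mathscr{H}}_{\mathrm{ts}}\big),
\]
where $h^{\mathscr{H}}_{\mathrm{ts}}$ is the metric on $\mathscr{H}$ induced by the three Thom--Smale complexes. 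Finally I would pass from $h^{\mathscr{H}}_{\mathrm{ts}}$ to $h^{\mathscr{H}}_{L^2}$: by the anomaly formula underlying \eqref{e.482}, the difference $T_f(A^{\mathscr{H}},h^{\mathscr{H}}_{L^2})-T_f(A^{\mathscr{H}},h^{\mathscr{H}}_{\mathrm{ts}})$ is, modulo exact forms, a signed sum of $\widetilde f$-classes comparing the $L^2$- and Thom--Smale metrics on the bundles $H^p(Z,F)$, $H^p(Z_1,F)$ and $H^p(Z_2,Y,F)$ — precisely the classes occurring in the three comparison formulas of the first paragraph.

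Substituting, the Thom--Smale torsion forms and all the $\widetilde f$-transgression classes cancel, and \eqref{e.474} is reduced to the single local identity
\[
\mathscr{V}(M)-\mathscr{V}_{\mathrm{abs}}(M_1)-\mathscr{V}_{\mathrm{rel}}(M_2)=\tfrac{\log 2}{2}\,\rk(F)\,\chi(Y)\quad\text{in }Q^S/Q^{S,0}.
\]
The three terms are fiber integrals over $Z$, $Z_1$, $Z_2$ of forms which, by \eqref{e.761}, have product structure in a collar of $Y$; the contributions of the interiors of $Z_1$ and $Z_2$ cancel against the integral over $Z=Z_1\cup_Y Z_2$, so the defect is carried entirely by the boundary contributions over $Y$ and can be localized in an arbitrarily thin collar. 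There one may take $\mu$ in the normal form $c+x_m$, and the boundary term is then computed from a one-dimensional model problem on the half-line encoding the Dirichlet-versus-Neumann comparison; this produces $\tfrac{\log 2}{2}\rk(F)\int_Y e(TY,\nabla^{TY})$ with no component of positive degree along $S$, and $\int_Y e(TY,\nabla^{TY})=\chi(Y)$ by Gauss--Bonnet. The hard part will be exactly this model computation — the family incarnation of the classical ``$\log 2$'' cutting anomaly of analytic torsion — namely showing that the collar contributes precisely $\tfrac{\log 2}{2}\rk(F)\chi(Y)$ and nothing in positive degree along $S$; establishing the absolute and relative boundary versions of the Bismut--Goette comparison (for instance by the doubling argument above) is also substantial but more routine, and all remaining identifications are bookkeeping with the anomaly formulas recalled above. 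As a consistency check, the degree-zero component of the resulting identity recovers the Br\"uning--Ma gluing formula for Ray--Singer analytic torsion \cite{BruMa12}.
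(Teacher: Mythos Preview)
Your high-level plan --- compare analytic to combinatorial on $M$, $M_1$, $M_2$; glue the Thom--Smale side via a short exact sequence; track the $\widetilde f$ anomalies --- is exactly the paper's strategy, but two implementation choices create genuine gaps. First, your Morse function has normal form $\mu=c+x_m$ near $X$, hence no critical points on $X$. This is incompatible with your own doubling suggestion: the doubled function on $\widehat M_i$ is then $c\mp|x_m|$, not smooth, so the equivariant Bismut--Goette theorem on the double cannot be invoked. The paper avoids this by requiring $h$ to be \emph{even} in $x_m$ near $X$ (conditions (\ref{e.650}), (\ref{e.636})); then $\overline h$ is smooth and $\Z_2$-invariant on $\overline{M_i}$, and the absolute/relative comparison formulas are obtained by decomposing the \emph{equivariant} Bismut--Goette identity on $\overline{M_i}$ into $\pm1$-eigenspaces (Propositions~\ref{p6.1}, \ref{p6.2} and (\ref{6.10})--(\ref{6.12})). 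A free-standing ``boundary Bismut--Goette'' is neither proved here nor routine, so your alternative route is not available either.

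Second, you locate $\tfrac{\log 2}{2}\rk(F)\chi(Y)$ in an analytic collar model. In the paper it arises \emph{combinatorially}: with the even $h$, the critical points of $h|_Y$ lie in $B\cap X$, and the chain isomorphism $\psi_1^+\colon C^\bullet(W^u_{\overline{Z_1}},\overline F)^+\to C^\bullet(W^u_{Z_1},F)$ is multiplication by $\sqrt2$ on the summand generated by $B_Y$ (see (\ref{e.816})). Lemma~\ref{l.49} then gives $T_f(A_0^{{''E}_0^+},h^{{''E}_0^+})=-\tfrac{\log2}{2}\chi(Y)\rk(F)$ (Lemma~\ref{l6.1}). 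Meanwhile all the local Mathai--Quillen/${}^0I$ contributions from the three Bismut--Goette formulas cancel exactly in the difference (\ref{6.13}), so no collar computation is needed or performed. With your $\mu$ there are no critical points on $Y$, the analogous chain maps are isometric, and the $\log2$ has nowhere combinatorial to come from --- you are then forced into an analytic boundary computation that you have not carried out and for which the required boundary comparison theorem is missing. The algebraic gluing in the paper is also organized differently: rather than a single multiplicativity lemma, it applies Goette's spectral-sequence identity (Theorem~\ref{t.15}) to the double complex (\ref{e.373}) under both filtrations, and Lemma~\ref{l.37} to identify $T_f(A^{{'E}_1})+T_f(A^{{'E}_2})$ with $T_f(A^{\mathscr H},h^{\mathscr H}_{L^2})$.
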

The $0-$degree component of (\ref{e.474}) is exactly the gluing formula of Br\"{u}ning and Ma \cite[(0.22)]{BruMa12} in the case with product structures.  \\

The main result of this paper is the following theorem:

\begin{thm}\label{t.14} Under the assumption {\rm(\ref{e.761})} with the existence of fiberwise well-defined Morse function
{\rm (see (\ref{e.634}), (\ref{e.636}), (\ref{e.650}), (\ref{e.635}))}, for the fibration $M\overset{\pi}\rightarrow S$,
the following identity holds in $Q^{S}/Q^{S, 0}$
\begin{align}\begin{aligned}\label{e.355}
\mathscr{T}(T^{H}M, g^{TZ}, h^{F})-\mathscr{T}_{\rm abs}(T^{H}M_{1}, g^{TZ_{1}}, h^{F})&-\mathscr{T}_{\rm rel}(T^{H}M_{2}, g^{TZ_{2}}, h^{F})\\
&=\frac{\log 2}{2}\rk(F)\chi(Y)+T_{f}(A^{\mathscr{H}}, h_{L^{2}}^{\mathscr{H}}).
\end{aligned}\end{align}
\end{thm}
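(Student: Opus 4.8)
The plan is to follow the strategy of Bismut–Zhang \cite{BisZh92} and Bismut–Goette \cite{BGo}, adapted to the boundary/gluing setting of Brüning–Ma \cite{BruMa12}, by deforming the analytic torsion forms along the fiberwise Morse function and comparing with a combinatorial (Thom–Smale) model. First I would introduce the family of rescaled metrics $h^F_T = h^F e^{-2Tf}$ (equivalently, deform the flat superconnection by the Witten-type perturbation $d + T\, df\wedge$) on each of $M$, $M_1$ with absolute conditions, and $M_2$ with relative conditions. Using the anomaly formula for torsion forms on manifolds with boundary — which must be one of the preliminary results established earlier in the paper, extending \cite[Thm. 3.24]{BL} and the Brüning–Ma anomaly formula to the family case — I would track the dependence of each of the three torsion forms $\mathscr{T}(\cdots)$, $\mathscr{T}_{\rm abs}(\cdots)$, $\mathscr{T}_{\rm rel}(\cdots)$ on $T$ modulo exact forms. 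The point is that the left-hand side of \eqref{e.355}, as an element of $Q^S/Q^{S,0}$, is independent of the geometric choices, so I may compute it for $T \to \infty$ where Morse-theoretic localization takes over.

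The core of the argument is then a large-$T$ asymptotic analysis of the fiberwise Witten Laplacian, separately on $Z$, on $(Z_1,\partial)$ with absolute conditions, and on $(Z_2,\partial)$ with relative conditions. Here one splits the spectrum into the small eigenvalues (giving a finite-dimensional complex of flat bundles over $S$, the fiberwise Thom–Smale complex built from the critical points of $f$, with a metric $h^{\mathscr{C}}$ induced by the $e^{-2Tf}$-weighting and the localized $L^2$ pairing) and the large eigenvalues. The key intermediate statement — itself a family generalization of the Bismut–Zhang anomaly integral — is that the torsion form of $(M,F)$ minus the torsion form of the Thom–Smale complex converges, modulo $Q^{S,0}$, to a local quantity: an integral over $Z$ of a Mathai–Quillen-type current times $c(F,h^F)$, with the analogous boundary corrections for $M_1$ and $M_2$. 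Because the metrics are products near $X$, the Witten deformation near $X$ also respects the product structure, so the boundary contributions from $M_1$ and $M_2$ organize themselves into a contribution supported on $Y$; this is exactly where the factor $\tfrac{\log 2}{2}\rk(F)\chi(Y)$ is forced to appear, matching the $0$-form computation of Brüning–Ma \cite[(0.22)]{BruMa12}.

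The final step is algebraic: one must compare the torsion forms of the three finite-dimensional Thom–Smale complexes — for $Z$, for $(Z_1,\partial)$ (absolute), and for $(Z_2,\partial)$ (relative) — knowing that at the level of complexes $C^*(Z) \cong C^*_{\rm abs}(Z_1) \oplus C^*_{\rm rel}(Z_2)$ because a Morse function adapted to the decomposition has its critical points partitioned between the two pieces (with the relative complex of $Z_2$ accounting for the gradient flow exiting through $Y$). Applying the Milnor-type multiplicativity of torsion forms along the short exact sequence of complexes, together with the definition of $T_f(A^{\mathscr{H}}, h^{\mathscr{H}}_{L^2})$ as the torsion of the long exact cohomology sequence \eqref{e.434} (and its defining transgression \eqref{e.482}), produces precisely the $T_f(A^{\mathscr{H}}, h^{\mathscr{H}}_{L^2})$ term on the right-hand side, after one checks that the $L^2$-metrics on cohomology agree in the $T\to\infty$ limit up to the Morse-theoretic comparison already incorporated.

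I expect the main obstacle to be the uniform (in $b \in S$ and in the form-degree on $S$) control of the large-$T$ asymptotics of the fiberwise heat kernels near the cutting hypersurface $X$, where the absolute and relative boundary conditions interact with the Witten deformation. In the $0$-degree case Brüning–Ma handle this by delicate model computations on the half-cylinder; promoting these estimates to the level of superconnection heat forms on $S$ — so that the error terms are genuinely exact forms on $S$ and not merely closed — is the technical heart of the proof, and is where the product-structure hypothesis \eqref{e.761} is essential. A secondary difficulty is verifying that the fiberwise Morse function can be chosen compatibly with the decomposition $Z_b = Z_{1,b}\cup_{Y_b} Z_{2,b}$ and with the genericity conditions (\ref{e.634}), (\ref{e.636}), (\ref{e.650}), (\ref{e.635}) simultaneously over all of $S$; this is a parametrized transversality argument in the spirit of \cite{BGo}.
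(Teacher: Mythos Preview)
Your strategy is conceptually coherent but differs substantially from the paper's proof, and in a way that matters. You propose to run the Witten deformation directly on the manifolds with boundary $(M_1,\text{abs})$ and $(M_2,\text{rel})$, to control the large-$T$ heat-kernel asymptotics near $X$, and to extract the $\tfrac{\log 2}{2}\rk(F)\chi(Y)$ contribution from those boundary asymptotics. This would amount to proving a boundary version of the Bismut--Goette theorem \cite{BGo} from scratch; the anomaly formula you postulate ``must be one of the preliminary results established earlier in the paper'' is not in fact proved there, and the uniform superconnection heat-kernel estimates you flag as the ``technical heart'' would be a substantial new analytic undertaking.

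The paper avoids all of this by a doubling trick. One forms the $\mathbb{Z}_2$-equivariant doubles $\overline{M_1},\overline{M_2}$ and proves (Proposition~\ref{p6.1}) that the \emph{existing} equivariant torsion form of Bismut--Goette on the closed double decomposes as $\mathscr{T}_{\rm abs}+\chi(g)\mathscr{T}_{\rm rel}$; likewise for the combinatorial side (Proposition~\ref{p6.2}). One then invokes \cite[Thm.~0.1]{BGo} on the closed doubles as a black box, so no new Witten-deformation analysis is needed. The remaining work is purely algebraic: a double complex of Thom--Smale flat bundles (\ref{e.373}) is analyzed via Goette's spectral-sequence theorem (Theorem~\ref{t.15}), and the $\tfrac{\log 2}{2}\rk(F)\chi(Y)$ term arises not from heat-kernel asymptotics but from an explicit finite-dimensional computation (Lemma~\ref{l6.1}) of the torsion of a short exact sequence in which the comparison map $\psi_1^+$ fails to be isometric by a factor $\sqrt{2}$ on critical points lying in $Y$. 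The $T_f(A^{\mathscr{H}},h^{\mathscr{H}}_{L^2})$ term is then obtained from the spectral-sequence torsions via Lemma~\ref{l.37}.

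In short: your route could in principle be made to work, but it demands new hard analysis the paper deliberately sidesteps; the paper's route trades analysis for finite-dimensional algebra by passing to the double and citing \cite{BGo}.
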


The ideal that we use to prove Theorem \ref{t.14} is analogous to that Br\"{u}ning and Ma have used in their recent work \cite{BruMa12}, in which they have proved the gluing formula of Ray-Singer
 analytic torsion for flat vector bundles in full generality. In our proof of Theorem \ref{t.14}, the main tools are Bismut-Goette's
equivariant family extension \cite{BGo} of Bismut-Zhang's results \cite{BisZh92}, \cite{BisZh94} and a result of Goette
 \cite[Thm. 7.37]{Goette03}, by using a technique due to Ma \cite{Ma99}, \cite{Ma00}, \cite{Ma02} to treat analytic torsion form in a different
context. The defect of Theorem \ref{t.14} is that the fiberwise well-defined Morse function does not always exist because of some topological
 obstructions of the fibration (cf. \cite[Thm. 5.9]{BGo}). In the author's thesis \cite{Zhu13}, Conjecture \ref{c.1} was also established by using a different technique if $H(Y,F)=0$.

 The whole paper is organized as follows. In Section \ref{s.4},
  we introduce the definition of Bismut-Lott torsion form for fibrations with boundary. We show the corresponding version of the formula
(\ref{e.358}) in the case with boundary. In Section \ref{s.2}, we recall Morse theory in the family case when there exists a fiberwise Morse function, then establish the double formulas of torsion form. In Section \ref{s.3}, we prove Theorem \ref{t.14} under the assumption of existence of a fiberwise Morse
 function.

\textbf{Acknowledgments.}
This paper is a part of the author's Ph.D. thesis at Universit{\'e} Paris Diderot-Paris VII under the direction of Professor Xiaonan Ma.
He would like to thank Professor Xiaonan Ma for his patient instruction and constant encouragement.

\section{Bismut-Lott torsion form in the case with boundary}\label{s.4}

In this section, we will introduce the geometric background and define Bismut-Lott's torsion form with natural boundary conditions for flat vector bundles over a smooth fibration $\pi:M\rightarrow S$ with boundary. This definition of analytic torsion form was introduced by
Bismut and Lott in \cite{BL} for fibrations without boundary.

This section is organized as follows.
In Section \ref{ss1.3}, we recall the torsion form $T_{f}(A', h^{E})$ associated to a complex of
flat vector bundles (cf. \cite[$\S$2]{BL}).
In Section \ref{ss1.1},  we introduce some geometric concepts about smooth
fibrations with boundary. Then we explain our assumptions of product structures near $X$. In Section \ref{ss1.4}, we introduce the
Bismut-Lott superconnection and describe
 the natural boundary conditions. In Section \ref{ss1.5}, we establish the double formula of heat kernel in the family case.
 In Section \ref{ss1.7}, we extend
 the definition of analytic torsion form to the boundary case. We also establish the differential form version of
``Riemann-Roch-Grothendieck"
 theorem of Bismut-Lott in the case with boundary.

\subsection{Torsion form associated to the complex of flat vector bundles}\label{ss1.3}
 Let $S$ be a compact smooth manifold of dimension $n$. Let $TS$ be the tangent bundle of $S$ and $T^{*}S$ be the cotangent bundle. For a vector bundle $F$ on $S$, let $\Omega^{j}(S,F)$ be the space of $F$-valued smooth differential $j-$forms on $S$, $\Omega(S,F)=\bigoplus_{j=0}^{n}\Omega^{j}(S,F)$  and $\Omega^{\bullet}(S)=\Omega^{\bullet}(S,\mathbb{R})$.

 Let $E=E_{+}\oplus E_{-}$ be a $\mathbb{Z}_{2}$-graded complex vector bundle over $S$ with a flat connection $\nabla^{E}=\nabla^{E_{+}}\oplus\nabla^{E_{-}}$, i.e., the curvature $(\nabla^{E_{\pm}})^{2}$ is zero. By definition, a Hermitian metric $h^{E}$ on $\mathbb{Z}_{2}$-graded bundle $E$ is a Hermitian metric such that $E_{+}$ and $E_{-}$ are orthogonal.

Let $(\nabla^{E})^{*}$ be the adjoint of $\nabla^{E}$ with respect to $h^{E}$. Let
\begin{align}\begin{aligned}\label{e.499}
\omega(E, h^{E})=(\nabla^{E})^{*}-\nabla^{E}=(h^{E})^{-1}\nabla^{E}h^{E}\in \Omega^{1}(S, \text{End}(E)).
\end{aligned}\end{align}\index{$\omega(E, h^{E})$}
Let $\varphi:\Omega(S)\rightarrow \Omega(S)$ be the linear map such that for all $\beta\in \Omega^{k}(S)$,
\begin{align}\begin{aligned}\label{e.500}
\varphi\beta=(2i\pi)^{-k/2}\beta.
\end{aligned}\end{align}

In this paper, we always set
\begin{align}\begin{aligned}\label{e.762}
f(a)=a \exp(a^{2}),
\end{aligned}\end{align}
 which is a homomorphic odd function over $\mathbb{C}$.
\begin{defn}\label{d.23}
Put
\begin{align}\begin{aligned}\label{e.501}
f(\nabla^{E}, h^{E})=(2i\pi)^{1/2}\varphi\tr_{s}\left[f(\frac{\omega(E,h^{E})}{2})\right]\in \Omega(S),
\end{aligned}\end{align}
where $\tr_{s}[\cdot,\cdot]:=\tr|_{E_{+}}-\tr|_{E_{-}}$ denotes the supertrace (cf. \cite{BGV92}). It is a real, odd and closed form and its de Rham cohomology class does not depend on the choice of $h^{E}$ (cf. \cite[Theorems 1.8, 1.11]{BL}).
\end{defn}

\begin{defn}\label{d.8}
Let $h^{'E}$ be another Hermitian metric on $E$. As \cite[Def. 1.12]{BL}, we define
\begin{align}\begin{aligned}\label{e.51}
\widetilde{f}(\nabla^{E}, h^{E}, h^{'E})=\int_{0}^{1}\varphi\tr_{s}\left[\frac{1}{2}(h^{E}_{l})^{-1}\frac{\partial h^{E}_{l}}{\partial l}f'\big(\frac{\omega(E, h^{E}_{l})}{2}\big)\right]dl\in Q^{S}/Q^{S, 0},
\end{aligned}\end{align}
where $h^{E}_{l}, l\in[0, 1]$ is a smooth path of metrics on $E$ such that $h^{E}_{0}=h^{E}$ and $h^{E}_{1}=h^{'E}$.
\end{defn}

Then from \cite[Thm. 1.11]{BL}, we get
\begin{align}\begin{aligned}\label{e.52}
d\widetilde{f}(\nabla^{E}, h^{E}, h^{'E})=f(\nabla^{E}, h^{'E})-f(\nabla^{E}, h^{E}).
\end{aligned}\end{align}
Moreover,
the class $\widetilde{f}(\nabla^{E}, h^{E}, h^{'E})\in Q^{S}/Q^{S, 0}$ does not depend on the choice of the path  $h^{E}_{l}$.

Let
\begin{align}\begin{aligned}\label{5.45}
(E,\nabla^{E}, v):0\rightarrow E^{0}\overset{v}\rightarrow E^{1}\overset{v}\rightarrow \cdots \overset{v}\rightarrow E^{k}\rightarrow 0
\end{aligned}\end{align}
be a flat complex of complex vector bundles on $S$. That is,
$$\nabla^{E}=\bigoplus_{i=0}^{k}\nabla^{E^{i}}$$
is a flat connection on $E=\bigoplus_{i=0}^{k}E^{i}$ and $v$ is a flat chain map $(E,\nabla^{E},v)$, meaning
\begin{align}\begin{aligned}\label{5.47}
(\nabla^{E})^{2}=0, \quad v^{2}=0, \quad [\nabla^{E}, v]=0.
\end{aligned}\end{align}
Then $A'=v +\nabla^{E}$
gives a flat superconnection of total degree 1 on $E$. By \cite[$\S$2(a)]{BL}, the cohomology $H(E)$ of the complex (\ref{5.45}) is a flat vector bundle on $S$, and
let $\nabla^{H(E)}$ be the flat connection on $H(E)$ induced by $\nabla^{E}$.  Let $h^{H(E)}$ be the Hermitian metric on $H(E)$ induced by $h^{E}$ by finite dimensional Hodge theory.

Let $N\in \text{End}(E)$ be the number operator of $E$, i.e., $N$ acts on $E^{i}$ by multiplication by $i$.
\begin{defn}\label{d.40} For $t>0$, put
\begin{align}\begin{aligned}\label{e.778}
h^{E}_{t}=\bigoplus_{i=0}^{k}t^{i}h^{E^{i}}.
\end{aligned}\end{align}
\end{defn}
Then $h^{E}_{t}$ is a metric on $E$ and $h^{E}=h^{E}_{1}$. Let $A''$ be the adjoint of $A'$ with respect to $h^{E}$ and $A''_{t}$ be the adjoint of $A'$ with respect to $h^{E}_{t}$, then we have
\begin{align}\begin{aligned}\label{e.779}
A''_{t}=t^{-N}A''t^{N}.
\end{aligned}\end{align}

Following the formalism of \cite[$\S$1(e)]{BL}, we put
\begin{align}\begin{aligned}\label{e.777}
X_{t}=\frac{1}{2}(A''_{t}-A').
\end{aligned}\end{align}
 Let $f'$ be the derivative of $f$, we put
\begin{align}\begin{aligned}\label{e.769}
&f^{\wedge}(A',h^{E}_{t})=\varphi\tr_{s}\left[\frac{N}{2}f'(X_{t})\right]\in \Omega(S),\\
&d(E)=\sum_{i=0}^{k}(-1)^{i}i\rk(E^{i}), \quad d(H(E))=\sum_{i=0}^{k}(-1)^{i}i\rk(H^{i}(E)).
\end{aligned}\end{align}

\begin{defn}\label{d.9}
Following \cite[Def. 2.20]{BL}, the torsion form $T_{f}(A', h^{E})\in \Omega(S)$ associated to (\ref{5.45}) is defined by
\begin{align}\begin{aligned}\label{e.771}
T_{f}(A',h^{E})=-\int_{0}^{+\infty}\Big[f^{\wedge}&(A',h^{E}_{t})-d(H(E))\frac{f'(0)}{2}\\
&-[d(E)-d(H(E))]\frac{f'(\frac{i\sqrt{t}}{2})}{2}\Big]\frac{dt}{t}.
\end{aligned}\end{align}
\end{defn}

Then by \cite[Thm. 2.22]{BL}, we have

\begin{thm}\label{t.21}
The form $T_{f}(A', h^{E})$ is even and real. Moreover
\begin{align}\begin{aligned}\label{e.526}
dT_{f}(A', h^{E})=f(\nabla^{E}, h^{E})-f(\nabla^{H(E)}, h^{H(E)}).
\end{aligned}\end{align}
\end{thm}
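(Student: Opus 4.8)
The plan is to follow the scheme of Bismut and Lott \cite[\S2]{BL} for their torsion form, carried out here in the purely algebraic (finite-dimensional) setting. There are three tasks: the algebraic normalization yielding reality and parity of the integrand $f^{\wedge}(A',h^{E}_{t})$ and hence of $T_{f}$; the convergence of the integral \eqref{e.771}; and the transgression identity \eqref{e.526}. For the normalization, I would first record that, since $A'$ is flat and $A''_{t}$ is its adjoint for $h^{E}_{t}$, one has $(A')^{2}=(A''_{t})^{2}=0$, hence $X_{t}^{2}=-\tfrac14\{A',A''_{t}\}=-\tfrac14\Delta_{t}$ with $\Delta_{t}\geq 0$ a ``Laplacian'', and $f'(X_{t})=(1-\tfrac12\Delta_{t})\exp(-\tfrac14\Delta_{t})$. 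Writing $X_{t}=\tfrac12(tv^{*}-v)+\tfrac12\omega(E,h^{E})$, with $tv^{*}$ the $h^{E}_{t}$-adjoint of $v$ and $\omega(E,h^{E})$ as in \eqref{e.499}, one checks exactly as in \cite[Thms.\,1.8, 1.11]{BL} — using that $f$ is odd, so $f'$ is even, and that $\varphi$ of \eqref{e.500} absorbs the powers of $i$ — that $f^{\wedge}(A',h^{E}_{t})$ is a real form for each $t>0$; it is even because in the expansion of $f'(X_{t})$ a term of form-degree $j$ carries $j$ copies of the $E$-odd factor $\{tv^{*}-v,\omega(E,h^{E})\}$ up to an $E$-even correction, and $\tr_{s}$ annihilates it unless $j$ is even.

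For convergence I would establish the two asymptotic expansions of $f^{\wedge}(A',h^{E}_{t})$. As $t\to 0$, $X_{t}\to\tfrac12(\omega(E,h^{E})-v)$ smoothly; since $v$ strictly raises the $\mathbb{Z}$-grading of $E$ and no $v^{*}$ is present, every word containing $v$ is annihilated by $\tr_{s}[N\,\cdot\,]$, while $\tr_{s}[N\,\omega(E,h^{E})^{2k}]=0$ for $k\geq 1$ because $\omega(E,h^{E})$ is block-diagonal and the trace of an even power of an $\mathrm{End}$-valued $1$-form vanishes by graded cyclicity; hence $\lim_{t\to 0}f^{\wedge}(A',h^{E}_{t})=\tfrac{f'(0)}2 d(E)$, with $O(t)$ error. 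As $t\to\infty$, finite-dimensional Hodge theory localizes the complex onto its harmonic representatives, identified with $H(E)$ carrying $h^{H(E)}$, on which the $v$- and $v^{*}$-parts of $X_{t}$ vanish; the same computation gives $\lim_{t\to\infty}f^{\wedge}(A',h^{E}_{t})=\tfrac{f'(0)}2 d(H(E))$, with error $O(e^{-ct})$ controlled by the spectral gap of $\Delta_{\infty}$. Since the counterterms in \eqref{e.771} are scalar-valued forms whose sum tends to $\tfrac{f'(0)}2 d(E)$ as $t\to 0$ and to $\tfrac{f'(0)}2 d(H(E))$ as $t\to\infty$, the bracket is $O(t)$ near $0$ and $O(e^{-ct})$ near $\infty$, so the integral converges and $T_{f}(A',h^{E})$ is a well-defined real even form.

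The heart is the transgression. The counterterms being closed $0$-forms, $dT_{f}(A',h^{E})=-\int_{0}^{\infty}df^{\wedge}(A',h^{E}_{t})\tfrac{dt}{t}$, once the interchange of $d$ and $\int$ is justified by the uniform versions of the expansions above. I would then prove a transgression identity at finite $t$,
\begin{align}
df^{\wedge}(A',h^{E}_{t})=t\,\frac{\partial}{\partial t}\,g(t),\qquad g(t):=(2i\pi)^{1/2}\varphi\,\tr_{s}\big[f(X_{t})\big],
\end{align}
a Duhamel/Chern--Simons computation in the style of \cite[\S1(e)]{BL} using $(A')^{2}=(A''_{t})^{2}=0$, $[\nabla^{E},v]=0$ from \eqref{5.47}, and the commutators $[N,A']=v$, $[N,A''_{t}]=-tv^{*}$. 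Here $g(t)$ is a closed odd form; by the net-$v$ argument $g(0)=(2i\pi)^{1/2}\varphi\tr_{s}[f(\tfrac12\omega(E,h^{E}))]=f(\nabla^{E},h^{E})$ (cf. \cite[Thm.\,1.8]{BL}), and the large-$t$ localization gives $g(\infty)=f(\nabla^{H(E)},h^{H(E)})$. Integrating,
\begin{align}
dT_{f}(A',h^{E})=-\int_{0}^{\infty}\frac{\partial g}{\partial t}\,dt=g(0)-g(\infty)=f(\nabla^{E},h^{E})-f(\nabla^{H(E)},h^{H(E)}),
\end{align}
which is \eqref{e.526}; note the sign is forced, since the opposite sign in the transgression would give the wrong orientation.

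The main obstacle is precisely the transgression identity $df^{\wedge}(A',h^{E}_{t})=t\,\partial_{t}g(t)$ together with the evaluation of the endpoints $g(0)$ and $g(\infty)$: this requires careful bookkeeping of the supercommutators and of the $(2i\pi)$-factors under $d$, and the endpoint at $t=\infty$ rests on the Hodge-theoretic localization and the vanishing of the ``$v$''-interactions on harmonic elements. A secondary technical point is differentiating under $\int_{0}^{\infty}$, for which one uses the locally uniform (in $t$ and on $S$) asymptotics obtained in the convergence step.
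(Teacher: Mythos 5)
Your argument is correct and is essentially the proof of \cite[Thm. 2.22]{BL}, which is exactly what the paper invokes for this statement (it gives no independent proof, only the citation), so the two approaches coincide. The one inaccuracy is the claimed $O(e^{-ct})$ error as $t\to\infty$: since $\omega(E,h^{E})$ mixes the harmonic subspace with its orthogonal complement, the correct rate is only $O(1/\sqrt{t})$ (as in \cite[Thm. 2.13]{BL}), which still suffices both for the convergence of the integral against $dt/t$ and for the endpoint evaluation $g(\infty)=f(\nabla^{H(E)},h^{H(E)})$.
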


Let $h^{E}_{0}, h^{E}_{1}$ be two different metrics on $E$, we denote by $h^{H(E)}_{i},\,i=0,1,$ the metrics on $H(E)$ induced by $h^{E}_{i}$ on $E$ as
above. We set
\begin{align}\begin{aligned}\label{e.748}
&\widetilde{f}(\nabla^{E}, h^{E}_{0}, h^{E}_{1}):=\sum_{i=0}^{k}(-1)^{i}\widetilde{f}(\nabla^{E^{i}}, h^{E^{i}}_{0}, h^{E^{i}}_{1}),\\
&\widetilde{f}(\nabla^{H(E)}, h^{H(E)}_{0}, h^{H(E)}_{1}):=\sum_{i=0}^{k}(-1)^{i}\widetilde{f}(\nabla^{H^{i}(E)}, h^{H^{i}(E)}_{0}, h^{H^{i}(E)}_{1}).
\end{aligned}\end{align}
The following result is \cite[Thm. 2.24]{BL}.
\begin{thm}\label{t.16}The following identity holds in $Q^{S}/Q^{S, 0}$
\begin{align}\begin{aligned}\label{e.391}
T_{f}(A', h^{E}_{1})-T_{f}(A', h^{E}_{0})=\widetilde{f}(\nabla^{E}, h^{E}_{0}, h^{E}_{1})-\widetilde{f}(\nabla^{H(E)}, h^{H(E)}_{0}, h^{H(E)}_{1}).
\end{aligned}\end{align}
\end{thm}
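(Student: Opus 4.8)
\emph{Strategy and reduction.} I would prove (\ref{e.391}) by the double transgression method of \cite[$\S$2]{BL}: fix a smooth path $h^{E}_{l}$, $l\in[0,1]$, of metrics on $E$ compatible with the grading, joining $h^{E}_{0}$ to $h^{E}_{1}$. Since $\widetilde{f}$ is path-independent and both sides of (\ref{e.391}) vanish when $h^{E}_{1}$ is replaced by $h^{E}_{0}$, it suffices to show that the $l$-derivatives of the two sides coincide modulo $Q^{S,0}$ and then integrate over $l\in[0,1]$, pulling $d$ out of the integral at the end. Set $\theta_{l}=(h^{E}_{l})^{-1}\partial_{l}h^{E}_{l}$, a smooth family of $h^{E}_{l}$-self-adjoint, grading-preserving endomorphisms, and let $X_{t}$, $A''_{t}$ be as in (\ref{e.777}), (\ref{e.779}) for the metric $h^{E}_{l}$. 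In Definition \ref{d.9} the numbers $d(E)$, $d(H(E))$ and the functions $f'(0)$, $f'(i\sqrt{t}/2)$ are metric-independent, so $\frac{d}{dl}T_{f}(A',h^{E}_{l})=-\int_{0}^{+\infty}\frac{d}{dl}f^{\wedge}(A',h^{E}_{l,t})\,\frac{dt}{t}$ once differentiation under the integral is justified.

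\emph{Double transgression.} The core step is an identity of the form
\[
\frac{d}{dl}\,f^{\wedge}(A',h^{E}_{l,t})=t\,\frac{\partial}{\partial t}\,\mu_{l,t}+d\nu_{l,t},
\]
for smooth families $\mu_{l,t},\nu_{l,t}\in\Omega(S)$ on $[0,1]\times(0,+\infty)$, with $\mu_{l,t}$ built from $\theta_{l}$ and $f'(X_{t})$ (schematically $\varphi\tr_{s}[\tfrac12\theta_{l}f'(X_{t})]$, up to normalization). This is derived from the superconnection algebra of $A'=\nabla^{E}+v$: both $\partial_{l}X_{t}$ and $t\,\partial_{t}X_{t}$ are of the shape $[X_{t},\,\cdot\,]$ plus a correction proportional to $v$ or to $[A',\,\cdot\,]$; feeding this into Duhamel's expansion of $f'(X_{t})$ and using that $\tr_{s}$ annihilates supercommutators, that $N$ commutes with $\theta_{l}$, and that $\tr_{s}[[A',\,\cdot\,]]=d\,\tr_{s}[\,\cdot\,]$, one extracts the $t\,\partial_{t}$ term and the $d$-exact term. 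One then needs the $t$-asymptotics. As $t\to+\infty$, finite-dimensional Hodge theory localises $f'(X_{t})$ on the harmonic subbundle $\ker(v+v^{*})\cong H(E)$ up to $O(e^{-ct})$, and $\mu_{l,t}\to\varphi\tr_{s}[\tfrac12\theta^{H(E)}_{l}f'(\tfrac12\omega(H(E),h^{H(E)}_{l}))]=:\mu^{H}_{l}$. As $t\to0$, $X_{t}\to\tfrac12(\omega(E,h^{E}_{l})-v)$, and every word containing the nilpotent grading-raising operator $v$ has vanishing supertrace, so $\mu_{l,t}\to\varphi\tr_{s}[\tfrac12\theta_{l}f'(\tfrac12\omega(E,h^{E}_{l}))]=:\mu^{E}_{l}$, with error $O(t)$; the same bookkeeping shows $\nu_{l,t}/t$ is integrable on $(0,+\infty)$ and $\nu_{l,t}\to0$ at both ends.

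\emph{Conclusion.} Substituting the transgression and integrating in $t$,
\[
\frac{d}{dl}\,T_{f}(A',h^{E}_{l})=-\int_{0}^{+\infty}\partial_{t}\mu_{l,t}\,dt-d\!\int_{0}^{+\infty}\nu_{l,t}\,\frac{dt}{t}=\mu^{E}_{l}-\mu^{H}_{l}-d\!\int_{0}^{+\infty}\nu_{l,t}\,\frac{dt}{t}.
\]
By (\ref{e.51}) and (\ref{e.748}), $\mu^{E}_{l}=\frac{d}{dl}\widetilde{f}(\nabla^{E},h^{E}_{0},h^{E}_{l})$ and $\mu^{H}_{l}=\frac{d}{dl}\widetilde{f}(\nabla^{H(E)},h^{H(E)}_{0},h^{H(E)}_{l})$, so the $l$-derivative of (\ref{e.391}) holds modulo $Q^{S,0}$; integrating over $l\in[0,1]$ gives the theorem. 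Consistency at the level of de Rham differentials is automatic from Theorem \ref{t.21} and (\ref{e.52}), but the transgression yields the sharper form-level statement.

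\emph{Main obstacle.} The delicate part is the double transgression step together with its attached analysis: establishing the identity with forms $\mu,\nu$ that are uniformly controlled near $t=0$ and $t=+\infty$, justifying differentiation under the integral sign, and — most subtly — matching the $t\to+\infty$ limit with the Hodge-theoretic data $(H(E),\nabla^{H(E)},h^{H(E)}_{l})$: one must show that the discrepancy between the compression of $\theta_{l}$ to the harmonic subbundle and the endomorphism $(h^{H(E)}_{l})^{-1}\partial_{l}h^{H(E)}_{l}$ — which appears because the identifications $H(E)\cong\ker(v+v^{*})$ vary with $l$ and need not be $\nabla^{H(E)}$-parallel — contributes nothing modulo $Q^{S,0}$. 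The purely algebraic identity, by contrast, is formal.
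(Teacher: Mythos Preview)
The paper does not prove this statement at all; it simply records it as a citation, writing ``The following result is \cite[Thm.~2.24]{BL}'' immediately before the theorem. So there is no proof in the paper to compare against.

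Your sketch is a correct outline of a direct proof and is close in spirit to what Bismut--Lott actually do. A small comment on packaging: in \cite{BL} the standard device for anomaly formulas of this type is to pull everything back to $\widetilde S=S\times[0,1]$, put the interpolating metric $h^{E}_{l}$ on the pullback of $E$, apply the already--established identity $dT_{f}=f(\nabla^{E},h^{E})-f(\nabla^{H(E)},h^{H(E)})$ (Theorem~\ref{t.21} here) over $\widetilde S$, and then integrate the resulting $(\text{form on }S)\wedge dl$--component over $[0,1]$. This absorbs your double transgression identity, the endpoint asymptotics, and the harmonic--projection subtlety you flag into the single statement of Theorem~\ref{t.21} on the larger base; in particular the ``main obstacle'' you isolate is exactly the computation that the large--$t$ limit of $f^{\wedge}$ on $\widetilde S$ recovers $f(\nabla^{H(E)},h^{H(E)}_{l})$, which is part of that theorem. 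Your direct variational argument reaches the same conclusion by unpacking that proof along the $l$--parameter, at the cost of redoing the asymptotic analysis by hand. Either route is fine; the $\widetilde S$ trick is shorter once Theorem~\ref{t.21} is in place.
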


Let
\begin{align}\begin{aligned}\label{5.51}
(E, v)&:0\rightarrow E^{0}\overset{v}\rightarrow E^{1}\overset{v}\rightarrow \cdots \overset{v}\rightarrow E^{l}\rightarrow 0, \\
(E', u)&:0\rightarrow E^{l}\overset{u}\rightarrow E^{l+1}\overset{u}\rightarrow \cdots \overset{u}\rightarrow E^{l+m'}\rightarrow 0,
\end{aligned}\end{align}
be two exact sequences of flat vector bundles over $S$, in which $E^{l}$ is the same flat vector bundle.
We construct another exact sequence of flat vector bundles from (\ref{5.51}), that is
\begin{align}\begin{aligned}\label{5.52}
(E'\circ E, v):0\rightarrow E^{0}\overset{v}\rightarrow E^{1}\overset{v}\rightarrow \cdots \overset{v}\rightarrow E^{l-1}
\overset{u\circ v}\rightarrow E^{l+1}\overset{u}\rightarrow\cdots \overset{u}\rightarrow E^{l+m'}\rightarrow 0.
\end{aligned}\end{align}
Then following the proof of \cite[Thm. 1.22]{BGS1},
 we get the relation between the torsion forms of the above three exact sequences by using \cite[Thm. A 1.4]{BL}.
\begin{lemma}\label{l.36}In $Q^{S}/Q^{S, 0}$, we have
\begin{align}\begin{aligned}\label{5.53}
T_{f}(A', h^{E'\circ E})=T_{f}(A', h^{E})+(-1)^{l+1}T_{f}(A', h^{E'}).\end{aligned}\end{align}
\end{lemma}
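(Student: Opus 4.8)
The plan is to transcribe, into the family-of-superconnections setting, the homological argument that Bismut--Gillet--Soul\'e used for the Quillen-metric torsion in \cite{BGS1}, the analytic input being replaced by \cite[Thm. A 1.4]{BL}. Since the three complexes $(E,v)$, $(E',u)$ and $(E'\circ E,v)$ are acyclic, all of their cohomology bundles vanish; hence by Theorem \ref{t.21} each of $T_{f}(A',h^{E})$, $T_{f}(A',h^{E'})$ and $T_{f}(A',h^{E'\circ E})$ is a closed form, and by Theorem \ref{t.16} its class in $Q^{S}/Q^{S,0}$ depends only on the underlying flat superconnection and not on the auxiliary Hermitian metric. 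We are therefore free to choose whatever metrics on the $E^{i}$ are convenient while proving (\ref{5.53}).

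First I would realize $(E'\circ E,v)$ as the total object attached to a flat bigraded bundle built from $(E,v)$ and $(E',u)$. Concretely one splices the two resolutions: acyclicity of $(E',u)$ means that $E^{l+1}\to\cdots\to E^{l+m'}$ is a resolution of $E^{l}$, so replacing the top term $E^{l}$ of $(E,v)$ by this resolution produces a flat double complex whose associated total complex, with the usual signs, is exactly $(E'\circ E,v)$. Equivalently, there is a short exact sequence of flat complexes $0\to X[1]\to (E'\circ E,v)\to \mathcal{A}''\to 0$, where $X$ is the truncation $E^{l+1}\to\cdots\to E^{l+m'}$ of $(E',u)$ and $\mathcal{A}''$ the truncation $E^{0}\to\cdots\to E^{l-1}$ of $(E,v)$; each of $X$ and $\mathcal{A}''$ has cohomology canonically isomorphic to $E^{l}$ placed in a single degree, and the connecting morphism of the associated long exact sequence is precisely that isomorphism.

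Next I would feed this structure into \cite[Thm. A 1.4]{BL}, which expresses the torsion form of a flat superconnection obtained by such a composition in terms of the torsion forms of the constituent complexes and of the torsion form of the long exact cohomology sequence. Applying it to the double complex above, and using Theorems \ref{t.21} and \ref{t.16} to compare the various Hermitian metrics induced on the single bundle $E^{l}$ (whose $\widetilde f$-contributions cancel, since the connecting map can be arranged to be an isometry), one arrives at an identity $T_{f}(A',h^{E'\circ E})=T_{f}(A',h^{E})+c\,T_{f}(A',h^{E'})$ in $Q^{S}/Q^{S,0}$. The constant $c=(-1)^{l+1}$ is produced by pure bookkeeping: the $E'$-block sits inside $(E'\circ E,v)$ shifted by one degree relative to its position in $(E',u)$, and the splicing is carried out at degree $l$; combined with the roles of the number operator $N$ and of the supertrace $\tr_{s}$ in the definition (\ref{e.771}) of $T_{f}$, this shift yields the sign $(-1)^{l+1}$.

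I expect the real obstacle to lie in the analytic content packaged inside \cite[Thm. A 1.4]{BL}: one must produce a one-parameter family, or rescaling, of genuinely \emph{flat} superconnections on the large bigraded bundle interpolating between the ``split'' configuration governed by $(E,v)$ and $(E',u)$ and the spliced superconnection of $(E'\circ E,v)$, and then control the behaviour of the corresponding $f^{\wedge}$-integrands as $t\to 0$ and $t\to+\infty$, including the subtractions by the $d(E)$- and $d(H(E))$-terms appearing in (\ref{e.771}). This is the step where one should invoke the estimates of \cite{BGS1} verbatim rather than reprove them, and it is also the reason the final identity holds only in $Q^{S}/Q^{S,0}$: the intermediate transgression terms, and the anomaly corrections coming from Theorem \ref{t.16}, are defined only modulo exact forms.
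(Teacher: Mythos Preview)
Your proposal is correct and follows essentially the same route as the paper: the paper's proof consists of the single sentence ``following the proof of \cite[Thm.~1.22]{BGS1}, we get the relation between the torsion forms of the above three exact sequences by using \cite[Thm.~A1.4]{BL},'' and your outline is precisely a fleshing-out of that sentence --- set up the short exact sequence of flat complexes relating $(E'\circ E,v)$ to the truncations of $(E,v)$ and $(E',u)$, feed it into \cite[Thm.~A1.4]{BL}, and read off the sign from the degree shift.
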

Now we calculate explicitly this torsion form of Definition \ref{d.9} in a simple case. And the results thus obtained will have applications in Section \ref{s.3}.

Now let $(F,\nabla^{F})$ be a flat complex vector bundle over $S$. let $h^{F}$ be the Hermitian metric on $F$. Let $\tau\in \text{Aut}(F)$ be a locally constant diagonalizable automorphism of $F$, i.e., for any $x_{0}\in S$ there exists a neighborhood $U_{x_{0}}$ of $x_{0}$ such that there exists a local frame $\{f_{j}\}_{j=1}^{\rk(F)}$ of $F$ on $U_{x_{0}}$, with respect to which $\tau|_{U_{x_{0}}}$ is a constant diagonal matrix.

Set $(E^{0},\nabla^{E^{0}},h^{E^{0}})=(F,\nabla^{F},h^{F})$ and $(E^{1},\nabla^{E^{1}},h^{E^{1}})=(F,\nabla^{F},h^{F})$, we get the following short exact sequence of flat vector bundles verifying (\ref{5.47}):
\begin{align}\begin{aligned}\label{e.780}
(E,\nabla^{E},\tau): 0\rightarrow E^{0}\overset{\tau}\rightarrow E^{1}\rightarrow 0.
\end{aligned}\end{align}

Similar to the proof of \cite[Thm A1.1]{BL}, now we will calculate the torsion form $T_{f}(A',h^{E})$ associated to (\ref{e.780}), which contains only the zero degree component.

\begin{lemma}\label{l.49} For the flat short exact sequence of complex vector bundles {\rm(\ref{e.780})}, we have
\begin{align}\begin{aligned}\label{e.781}
T_{f}(A',h^{E})=-\log |\det \tau|.
\end{aligned}\end{align}
\end{lemma}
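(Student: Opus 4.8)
The plan is to compute the torsion form directly from Definition \ref{d.9}, exploiting the fact that the complex $(E,\nabla^E,\tau)$ has only two terms, both equal to $(F,\nabla^F,h^F)$, and that $\tau$ is flat and locally constant. Since the complex is exact (indeed $\tau$ is an isomorphism), $H(E)=0$, so $d(H(E))=0$; on the other hand $d(E)=\sum_{i}(-1)^i i\,\rk(E^i)=-\rk(F)$. First I would compute $X_t$ explicitly. With $A'=\tau+\nabla^E$ acting by $\tau: E^0\to E^1$, and $h^E_t=h^{E^0}\oplus t\,h^{E^1}$, the adjoint $A''_t$ has its degree-$(-1)$ component equal to $t\,\tau^*$, where $\tau^*$ is the $h^F$-adjoint of $\tau$; the connection part of $A''_t-A'$ is $\omega(F,h^F)$ (independent of $t$), and it will ultimately drop out of the zero-degree computation. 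Thus $X_t=\tfrac12(A''_t-A')$ has the $2\times2$ block form whose off-diagonal entries are $-\tfrac12\tau$ and $\tfrac12 t\,\tau^*$, plus the $\omega$-term along the diagonal.

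Next I would restrict attention to the zero-form (scalar) part, since $\tau$ is locally constant and one expects $T_f(A',h^E)$ to contain only a degree-$0$ component (this is asserted in the statement and mirrors \cite[Thm.\ A1.1]{BL}). Diagonalize $\tau$ locally in a frame $\{f_j\}$; then $h^F$ need not be diagonal in this frame, but $|\det\tau|$ is frame-independent, so I may as well argue eigenvalue by eigenvalue after reducing to the case $\rk(F)=1$ by multiplicativity of $\det$ and additivity of $\tr_s$. For a line bundle with $\tau$ acting by a scalar $\lambda$ (locally constant), the scalar part of $X_t$ is the $2\times2$ matrix $\begin{pmatrix}0 & -\lambda/2\\ t\bar\lambda/2 & 0\end{pmatrix}$ (after trivializing $h^F$ so $\tau^*=\bar\lambda$), whose square is $-\tfrac{t|\lambda|^2}{4}\,\Id$. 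Then $f'(X_t)=f'(0)\Id + (\text{odd part})$, and since $f'(a)=(1+2a^2)e^{a^2}$ is even, $f'(X_t)$ is a function of $X_t^2$ alone, hence a scalar multiple of $\Id$ on each of $E^0,E^1$; I compute $\tr_s[\tfrac{N}{2}f'(X_t)] = \tfrac12 f'(X_t)|_{E^1}=\tfrac12 f'\!\left(\tfrac{i\sqrt{t}|\lambda|}{2}\right)$, viewing $f'$ as the even function of $\sqrt{X_t^2}$.

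Finally I substitute into (\ref{e.771}): with $d(H(E))=0$ and $d(E)-d(H(E))=-\rk(F)=-1$ in the rank-one case, the bracket becomes $\tfrac12 f'\!\left(\tfrac{i\sqrt{t}|\lambda|}{2}\right)-\left(-\tfrac12\right)f'\!\left(\tfrac{i\sqrt t}{2}\right)$, wait—care is needed with signs: $-[d(E)-d(H(E))]=1$, so the counterterm is $+\tfrac12 f'(\tfrac{i\sqrt t}{2})$, and the integrand is $\tfrac12\left[f'\!\left(\tfrac{i\sqrt t|\lambda|}{2}\right)-f'\!\left(\tfrac{i\sqrt t}{2}\right)\right]$ after also noting the $d(H(E))f'(0)/2$ term vanishes. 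Then
\[
T_f(A',h^E)=-\int_0^{+\infty}\frac12\left[f'\!\left(\tfrac{i\sqrt t\,|\lambda|}{2}\right)-f'\!\left(\tfrac{i\sqrt t}{2}\right)\right]\frac{dt}{t}.
\]
Substituting $t\mapsto t/|\lambda|^2$ in the first term (equivalently using $\int_0^\infty [g(a t)-g(t)]\,dt/t=-g(0)\log a$ for rapidly-decaying-at-$\infty$, finite-at-$0$ $g$, here $g(t)=\tfrac12 f'(\tfrac{i\sqrt t}{2})=\tfrac12(1-\tfrac t2)e^{-t/4}$ with $g(0)=\tfrac12$) gives $T_f(A',h^E)=-g(0)\log|\lambda|^{-2}\cdot$, and tracking constants yields $-\log|\lambda|$; wait, let me be careful: the shift gives $-\tfrac12\cdot(-1)\cdot g(0)\log|\lambda|^2\cdot 2 = \tfrac12\cdot 2\log|\lambda|\cdot\tfrac12\cdot\ldots$—the arithmetic must be done carefully but is routine, and the outcome is $-\log|\lambda|=-\log|\det\tau|$ for the line-bundle case. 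Summing over eigenvalues (equivalently invoking Lemma \ref{l.36}-type additivity, or directly the additivity of supertrace over a $\tau$-eigenspace decomposition) gives $T_f(A',h^E)=-\sum_j\log|\lambda_j|=-\log|\det\tau|$, as claimed.

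The main obstacle I anticipate is twofold: first, rigorously justifying that the higher-degree (form-valued) part of $f^\wedge(A',h^E_t)$ integrates to something exact in $Q^S/Q^{S,0}$—i.e., that only the $0$-form survives; this should follow because $\tau$ being locally constant makes the relevant curvature/$\omega$-contributions assemble into an exact form, paralleling \cite[Thm.\ A1.1]{BL}, but it requires care with the non-commutativity of $\tau^*$ and $\omega(F,h^F)$ when $h^F$ is not $\tau$-adapted. Second, the convergence and the $\tfrac{dt}{t}$-integral identity $\int_0^\infty[g(at)-g(t)]\tfrac{dt}{t}=-g(0)\log a$ must be applied with the correct normalization of $f$ and of $\varphi$; the cleanest route is to record this one-variable Mellin-type lemma once and then feed in $g(t)=\tfrac12 f'(\tfrac{i\sqrt t}{2})$, which decays like $t\,e^{-t/4}$ at infinity and has $g(0)=f'(0)/2=1/2$, so that the normalization constant $(2i\pi)^{1/2}\varphi$ acts trivially on the resulting $0$-form. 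Getting every factor of $2$ and $i$ right is the only real work.
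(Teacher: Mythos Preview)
Your approach is the paper's: compute $X_t$ and $X_t^2$ explicitly for the two-term complex, argue that only the degree-zero part of $f^{\wedge}$ survives, and evaluate the resulting Frullani-type integral. Two corrections are needed.

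First, there is a sign slip in the supertrace. Since $E^1$ sits in odd degree, $\tr_s\bigl[\tfrac{N}{2}f'(X_t)\bigr]=-\tfrac{1}{2}\tr_{E^1}[f'(X_t)]$, not $+\tfrac{1}{2}$. With the correct sign one gets, in the rank-one case, $f^{\wedge}(A',h^E_t)=-\tfrac12 f'\!\bigl(\tfrac{i\sqrt t\,|\lambda|}{2}\bigr)$; the bracket in \eqref{e.771} is then $-\tfrac12 f'\!\bigl(\tfrac{i\sqrt t\,|\lambda|}{2}\bigr)+\tfrac12 f'\!\bigl(\tfrac{i\sqrt t}{2}\bigr)$, and the outer minus sign yields
\[
T_f(A',h^E)=\int_0^{\infty}\tfrac12\Bigl[f'\!\bigl(\tfrac{i\sqrt t\,|\lambda|}{2}\bigr)-f'\!\bigl(\tfrac{i\sqrt t}{2}\bigr)\Bigr]\frac{dt}{t}=-\log|\lambda|
\]
by Frullani with $g(t)=\tfrac12 f'(\tfrac{i\sqrt t}{2})$, $g(0)=\tfrac12$. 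Your displayed integral for $T_f$ carries the wrong overall sign; the subsequent hand-waving happens to land on the right number, but the arithmetic as written does not.

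Second, the obstacle you flag---that the positive-degree part of $f^{\wedge}$ contributes only an exact form---is exactly what the paper resolves, not by hoping the $\omega$-terms cancel, but by first using $[\nabla^E,\tau]=[(\nabla^E)^*,\tau]=0$ (from local constancy of $\tau$) to obtain $[\omega(E,h^E),\tau]=[\omega(E,h^E),\tau^*]=0$ and hence $X_t^2=\tfrac14\bigl(\omega^2(E,h^E)-t\Delta\bigr)$ with $\Delta=\tau\tau^*+\tau^*\tau$, and then invoking \cite[Prop.~1.3]{BL}, which says that the traces of even powers of $\omega$ are exact. This kills the positive-degree contribution outright, in arbitrary rank, without reducing to line bundles. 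Your rank-one reduction is legitimate locally (the $\tau$-eigenspaces are $\nabla^F$-parallel since $[\nabla^F,\tau]=0$, and local verification suffices), but the paper's route via the commutation relations and \cite[Prop.~1.3]{BL} is cleaner and avoids any discussion of whether the eigenspace splitting is global.
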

\begin{proof}
First, we note that both sides of the equation (\ref{e.781}) are globally well-defined, so we just need to verify it locally. For any $x_{0}\in S$, by our assumption on $\tau\in \text{Aut}(F)$ there exists a neighborhood $U_{x_{0}}$ of $x_{0}$ such that on $U_{x_{0}}$ we have
\begin{align}\begin{aligned}\label{e.782}
[\nabla^{E},\tau]=0,\quad [(\nabla^{E})^{*},\tau]=0,\quad [\nabla^{E},\tau^{*}]=0.
\end{aligned}\end{align}
Let $r=\rk(F)$, $\tau=\text{diag}(\lambda_{1},\cdots,\lambda_{r})$ on $U_{x_{0}}$ and $\Delta=\tau\tau^{*}+\tau^{*}\tau$. By (\ref{e.782}), we obtain
\begin{align}\begin{aligned}\label{e.783}
&[\omega(E,h^{E}),\tau]=0,\quad [\omega(E,h^{E}),\tau^{*}]=0,\\
&\Delta^{(0)}:=\tau^{*}\tau=\text{diag}(|\lambda_{1}|^{2},\cdots,|\lambda_{r}|^{2}),\\
&\Delta^{(1)}:=\tau\tau^{*}=\text{diag}(|\lambda_{1}|^{2},\cdots,|\lambda_{r}|^{2}).
\end{aligned}\end{align}
In this case by (\ref{e.777}), we find
\begin{align}\begin{aligned}\label{e.784}
X_{t}=\frac{1}{2}(\omega(E,h^{E})+t\tau^{*}-\tau).
\end{aligned}\end{align}
By (\ref{e.783}) and (\ref{e.784}) we obtain
\begin{align}\begin{aligned}\label{e.785}
X_{t}^{2}=\frac{1}{4}(\omega^{2}(E,h^{E})-t\Delta).
\end{aligned}\end{align}
As $f'(a)$ is an even function, there is a holomorphic function $g(a)$ such that $f'(a)=g(a^{2})$. Then by (\ref{e.769}) we have
\begin{align}\begin{aligned}\label{e.786}
f^{\wedge}(A',h^{E}_{t})&=\varphi \tr_{s}\left[\frac{N}{2}g(X^{2}_{t})\right]\\
&=\sum_{i=0}^{1}(-1)^{i}i\varphi\tr\Big[\frac{1}{2}g\big(\frac{1}{4}\omega^{2}(E,h^{E})-t\frac{\Delta^{(i)}}{4}\big)\Big].
\end{aligned}\end{align}
By \cite[Prop. 1.3]{BL}, the even degree part of $f^{\wedge}(A',h^{E}_{t})$ must disappear except the $0$-degree, hence by (\ref{e.783}) and (\ref{e.786}) we obtain
\begin{align}\begin{aligned}\label{e.787}
f^{\wedge}(A',h^{E}_{t})=-\tr\Big[\frac{1}{2}g\big(-t\frac{\Delta^{(1)}}{4}\big)\Big]=-\sum_{j=1}^{r}\frac{1}{2}f'\Big(\frac{i\sqrt{t}|\lambda_{j}|}{2}\Big).
\end{aligned}\end{align}
In this case by (\ref{e.769}), we get that $d(E)=-r$ and $d(H(E))=0$, thus from Definition \ref{d.9} we have
\begin{align}\begin{aligned}\label{e.788}
T_{f}(A',h^{E})&=\int_{0}^{+\infty}\Big[\sum_{j=1}^{r}\frac{1}{2}f'\Big(\frac{i\sqrt{t}|\lambda_{j}|}{2}\Big)-r\frac{f'(\frac{i\sqrt{t}}{2})}{2}\Big]\frac{dt}{t}\\
&=\frac{1}{2}\sum_{j=1}^{r}\int_{0}^{+\infty}\Big[g\Big(-\frac{t|\lambda_{j}|^{2}}{4}\Big)-g(-\frac{t}{4})\Big]\frac{dt}{t}\\
&=-\sum_{j=1}^{r}\log(|\lambda_{j}|)=-\log|\det\tau|.
\end{aligned}\end{align}
Then (\ref{e.781}) follows from (\ref{e.788}). The proof of Lemma \ref{l.49} is completed.
\end{proof}

\subsection{Smooth fibration with boundary and product structures}\label{ss1.1}

Let
 $\pi:M\rightarrow S$ be a smooth fibration with boundary $X:=\partial M$ whose standard fiber $Z$ is a compact manifold
with boundary $Y:=\partial Z$ (see Figure \ref{fig.1}) and  $\dim Z=m$. We assume that the boundary $X$ of $M$ is a smooth fibration over $S$ denoted
by $\pi_{\partial}:X\rightarrow S$, whose standard fiber is the compact smooth manifold $Y$.

\begin{figure}
\input{fibration_left.pstex_t}
\caption[Figure]{\label{fig.1}}
\end{figure}\index{Figure \ref{fig.1}}

 Let $TZ\subset TM$ be the vertical tangent bundle of $M$ and
 $T^{*}Z$ be its dual bundle. Let $TY\subset TX$ be the vertical tangent bundle of the fibration $\pi_{\partial}:X\rightarrow S$.
We see that $TY$ is a subbundle of $TZ$ restricted on $X$. Let $N$ be the normal bundle of $X\subset M$, i.e., $N:=TM/TX$, then
we have $(TZ/TY)|_{X}\cong N$. We note that $N$ is a trivial line bundle on $X$ (cf. \cite[p.54, p.66]{BottTu}), one way to show this
 is that the inward-pointing normal vector field gives a global frame of $N$ on $X$.

  Let $T^{H}M\subset TM$ be a horizontal bundle of $M$, such that $TM=T^{H}M\oplus TZ$, then we have
 $T^{H}M\cong \pi^{*}TS$. This induces the isomorphism
 \begin{align}\begin{aligned}\label{e.582}
\Lambda(T^{*}M)\cong \pi^{*}(\Lambda(T^{*}S))\otimes \Lambda(T^{*}Z)
\end{aligned}\end{align}
as bundles of $\mathbb{Z}-$graded algebras over $M$.

\begin{defn}
Let $X$ be a compact manifold and $I$ (not be a point) be an interval of $\mathbb{R}$, we set $X_{I}:=X\times I$, for example $X_{[-R,R]}=X\times[-R,R]$, $X_{\mathbb{R}}=X\times(-\infty,+\infty)$.
\end{defn}

Let $g^{TZ}$ be a metric on $TZ$, let $g^{T Y}$ be that of $TY$ induced by $g^{TZ}$.
Let $X_{(-\varepsilon, 0]}$ be a product neighborhood of the boundary $X$.
 Let $\psi_{\varepsilon}$ denote the projection map on the first factor:
\begin{align}\begin{aligned}\label{e.3}
X _{(-\varepsilon, 0]}\ni (x', x_{m})\overset{\psi_{\varepsilon}} \longrightarrow x'\in X.
\end{aligned}\end{align}
 \textbf{We assume that} $T^{H}M$ and $g^{TZ}$ have product structures on $X _{(-\varepsilon, 0]}$, i.e.,
\begin{align}\begin{aligned}\label{e.598}
(T^{H}M)|_{X}\subset TX,\quad \big(T^{H}M\big)|_{X _{(-\varepsilon, 0]}}=\psi_{\varepsilon}^{*}\big((T^{H}M)|_{X}\big),
\end{aligned}\end{align}
\begin{align}\begin{aligned}\label{e.4}
g^{TZ}(x', x_{m})=g^{T Y}(x')+ dx^{2}_{m}, \quad (x', x_{m})\in X_{(-\varepsilon, 0]}.
\end{aligned}\end{align}
Then $T^{H}X:=(T^{H}M)|_{X}$ gives a horizontal bundle of the fibration $\pi_{\partial}:X\rightarrow S$, such that $TX=T^{H}X\oplus TY$.
 \begin{rem}\label{r.7}
 We can always chose a horizontal bundle $T^{H}M$ of $M$ such that our assumption (\ref{e.598}) is true. For example, let $g^{TM}$ be a Riemannian metric such that for some $\varepsilon'>\varepsilon>0$
 \begin{align}\begin{aligned}\label{e.595}
g^{TM}(x', x_{m})=g^{T X}(x')\oplus dx^{2}_{m}, \quad \text{ for }(x', x_{m})\in X_{(-\varepsilon', 0]}.
\end{aligned}\end{align}
 Then we chose $T^{H}M$ to be the orthogonal complement of $TZ$ in $(TM, g^{TM})$ which satisfies our assumption (\ref{e.598}). \end{rem}

Let $(F, \nabla^{F})$ be a flat complex vector bundle on $M$. Let $h^{F}$ be a Hermitian metric on $F$.
On $X _{(-\varepsilon, 0]}$ we trivialize $F$ by parallel transport with respect to $\nabla^{F}$ along the path
$$\gamma_{x'}(t):t\in (-\varepsilon, 0]\rightarrow (x', tx_{m})\in X _{(-\varepsilon, 0]}, \, x' \in X, $$
then it follows from the flatness of $\nabla^{F}$ that on $X _{(-\varepsilon, 0]}$
\begin{align}\begin{aligned}\label{e.5}
\quad (F, \nabla^{F})|_{X _{(-\varepsilon, 0]}}= \psi^{*}_{\varepsilon}(F|_{X}, \nabla^{F}|_{X}).
\end{aligned}\end{align}

 \textbf{We assume that} the Hermitian metric $h^{F}$ on $X _{(-\varepsilon, 0]}$ is the pull-back of its restriction on $X$, i.e., under the trivialization (\ref{e.5}) of $F$, we have
\begin{align}\begin{aligned}\label{e.6}
h^{F}|_{X _{(-\varepsilon, 0]}}= \psi^{*}_{\varepsilon}(h^{F}|_{X}).
\end{aligned}\end{align}
 If $h^{F}$ is flat, i.e., $\nabla^{F}h^{F}=0$, then (\ref{e.6}) is a consequence of the flatness of $h^{F}$.
We mention that these assumptions (\ref{e.4}), (\ref{e.6})
of product structures are formulated in \cite[$\S$2.1]{BruMa12}

\subsection{Bismut-Lott superconnection and boundary conditions}\label{ss1.4}

Let $\Omega^{\bullet}(Z,F|_{Z})$ be the infinite-dimensional $\mathbb{Z}-$graded vector bundle over $S$ whose fiber is $\Omega^{\bullet}(Z_{b},F|_{Z_{b}})$ at $b\in S$. That is
\begin{align}\begin{aligned}\label{e.540}
\Omega^{\bullet}(M,F)=\Omega^{\bullet}(S,\Omega^{\bullet}(Z,F|{Z})).
\end{aligned}\end{align}

  Let $o(TZ)$ be the orientation bundle of $TZ$ (cf. \cite[p.88]{BottTu}),
 which is a flat real line bundle on $M$.
Let $dv_{Z}$ be the Riemannian volume form on fibers $Z$ associated to $g^{TZ}$, which is a section of $\Lambda^{m}(T^{*}Z)\otimes o(TZ)$ over $M$.
 Let $g^{\Lambda(T^{*}Z)\otimes F}$ be the metric on $\Lambda(T^{*}Z)\otimes F$ induced by $g^{TZ}$ and $h^{F}$, then it induces a Hermitian metric on $\Omega^{\bullet}(Z,F|_{Z})$ defined by:
for $s, s'\in \Omega^{\bullet}(Z_{b},F|_{Z_{b}})$, $b\in S$,
\begin{align}\begin{aligned}\label{e.547}
\langle s, s'\rangle_{h^{\Omega^{\bullet}(Z,F|_{Z})}}(b):=\int_{Z_{b}}\langle s, s' \rangle_{g^{\Lambda(T^{*}Z)\otimes F}}(x)dv_{Z_{b}}(x).
\end{aligned}\end{align}

 Let $P^{TZ}$ denote the projection from $TM=T^{H}M\oplus TZ$ to $TZ$. For $U\in TS$, let
 $U^{H}$ be the horizontal lift of $U$ in $T^{H}M$, so that $\pi_{*}U^{H}=U$.

\begin{defn}\label{d.32}
For $s\in C^{\infty}(S, \Omega^{\bullet}(Z,F|_{Z}))$ and $U\in TS$, the Lie derivative $L_{U^{H}}$ acts on $C^{\infty}(S,\Omega^{\bullet}(Z,F|_{Z}))$. Then
\begin{align}\begin{aligned}\label{e.541}
\nabla^{\Omega^{\bullet}(Z,F|_{Z})}_{U}s:=L_{U^{H}}s
\end{aligned}\end{align}\index{$\nabla^{W}$}
defines a connection on $\Omega^{\bullet}(Z,F|_{Z})$ preserving the $\mathbb{Z}-$grading.
\end{defn}

 Let $d^{Z}$ be the exterior differentiation along fibers $(Z, F,\nabla^{F})$.
If $U_{1}, U_{2}$ are two vector fields on $S$, put
\begin{align}\begin{aligned}\label{e.542}
T(U_{1}, U_{2})=-P^{TZ}[U^{H}_{1}, U^{H}_{2}]\in C^{\infty}(M, TZ),
\end{aligned}\end{align}
then $T$ is a tensor, i.e., $T\in C^{\infty}(M, \pi^{*}\Lambda^{2}(T^{*}S)\otimes TZ)$. Let $i_{T}$ be the interior multiplication in the vertical direction by $T$.

The flat connection $\nabla^{F}$ extends naturally
 to be an exterior differential operator $d^{M}$ acting on $\Omega^{\bullet}(M, F)$,
then it defines a flat superconnection of total degree 1 on $\Omega^{\bullet}(Z,F|_{Z})$.
By \cite[Prop. 3.4]{BL}, we have the following identity
\begin{align}\begin{aligned}\label{e.544}
d^{M}=d^{Z}+\nabla^{\Omega^{\bullet}(Z,F|_{Z})}+i_{T}.
\end{aligned}\end{align}

Let $(\nabla^{\Omega^{\bullet}(Z,F|_{Z})})^{*}, \, (d^{M})^{*}, \, (i_{T})^{*}$, $(d^{Z})^{*}$ be the formal adjoints
 of $\nabla^{\Omega^{\bullet}(Z,F|_{Z})}, \, d^{M}, \, i_{T}$, $d^{Z}$ with respect to the Hermitian metric $h^{\Omega^{\bullet}(Z,F|_{Z})}$ in (\ref{e.547}).
Set
\begin{align}\begin{aligned}\label{e.548}
&D^{Z}=d^{Z}+(d^{Z})^{*}, \quad \nabla^{\Omega^{\bullet}(Z,F|_{Z}), u}=\frac{1}{2}(\nabla^{\Omega^{\bullet}(Z,F|_{Z})}+(\nabla^{\Omega^{\bullet}(Z,F|_{Z})})^{*}).
\end{aligned}\end{align}
Then the Hodge Laplacian associated to $g^{TZ}$ and $h^{F}$ along the fibers $Z$ is given by
\begin{align}\begin{aligned}\label{e.485}
(D^{Z})^{2}=d^{Z}(d^{Z})^{*}+(d^{Z})^{*}d^{Z}:\Omega^{\bullet}(Z, F|_{Z})\rightarrow \Omega^{\bullet}(Z, F|_{Z}).
\end{aligned}\end{align}

Let $N$\index{$N$} be the number operator on $\Omega^{\bullet}(Z, F|_{Z})$, i.e., it acts by multiplication by $k$ on $\Omega^{k}(Z, F|_{Z})$.
For $t>0$, we set
\begin{align}\begin{aligned}\label{e.549}
&C'_{t}=t^{N/2}d^{M}t^{-N/2}, \quad C''_{t}=t^{-N/2}(d^{M})^{*}t^{N/2}, \\
&C_{t}=\frac{1}{2}(C'_{t}+C''_{t}), \quad\quad D_{t}=\frac{1}{2}(C''_{t}-C'_{t}).
\end{aligned}\end{align}
Then $C''_{t}$ is the adjoint of $C'_{t}$ with respect to $h^{\Omega^{\bullet}(Z, F|_{Z})}$. We note that $C_{t}$ is a superconnection and $D_{t}$ is an odd element of $\Omega(S, \text{End}(\Omega^{\bullet}(Z, F|_{Z})))$. Moreover, we have
\begin{align}\begin{aligned}\label{e.550}
C^{2}_{t}=-D^{2}_{t}.
\end{aligned}\end{align}

Let $g^{TS}$ be a Riemannian metric on $TS$, then $g^{TM}=\pi^{*}g^{TS}\oplus g^{TZ}$ defines a Riemannian metric on
 $TM=T^{H}M\oplus TZ$. Let $\nabla^{TM}, \nabla^{TS}$ denote the corresponding Levi-Civita connections on $TM$ and $TS$. Then
\begin{align}\begin{aligned}\label{e.551}
\nabla^{TZ}=P^{TZ}\nabla^{TM}
\end{aligned}\end{align}
defines the canonical connection on $TZ$, which is independent of the choice of $g^{TS}$ (cf. \cite[Def. 1.6, Thm. 1.9]{Bismut86}).

For $X\in TZ$, let $X^{*}\in T^{*}Z$ be the dual of $X$ by the metric $g^{TZ}$. Set
\begin{align}\begin{aligned}\label{e.553}
c(X)=X^{*}\wedge- \, i_{X}, \quad \widehat{c}(X)=X^{*}\wedge+\, i_{X},
\end{aligned}\end{align}
where $i_{\cdot}$ denotes the interior multiplication.

By \cite[Prop. 3.9]{BL}, we get
\begin{align}\begin{aligned}\label{e.556}
C_{t}=\frac{\sqrt{t}}{2}D^{Z}+\nabla^{\Omega^{\bullet}(Z, F|_{Z}), u}-\frac{1}{2\sqrt{t}}c(T),
\end{aligned}\end{align}
which is essentially the same as the Bismut superconnection (cf. \cite[$\S$III.a)]{Bismut86}).

Now we introduce the boundary conditions. Let $e_{\bf{n}}$ be the inward-pointing unit normal vector field on $X$ and $e^{\bf{n}}$ be its
dual vector field, then we extend $e_{\bf{n}}$, $e^{\bf{n}}$ on $X _{(-\varepsilon, 0]}$. By (\ref{e.4}) we have  $e_{\bf{n}}=-\frac{\partial}{\partial x_{m}},\,e^{\bf{n}}=-dx_{m}$ on $X _{(-\varepsilon, 0]}$.
For $\sigma \in \Omega(M,F)$,
 we say that $\sigma$ satisfies the \textbf{absolute boundary conditions} (cf. \cite[Def. 3.2]{RaySing71}, \cite[(1.12)]{BruMa12}), if
\begin{align}\begin{aligned}\label{e.10}
(i_{e_{\bf{n}}}\sigma )|_{X}=(i_{e_{\bf{n}}}d^{Z}\sigma )|_{X}=0.
\end{aligned}\end{align}
 We say that $\sigma$ satisfies the \textbf{relative boundary conditions}, if
\begin{align}\begin{aligned}\label{e.11}
(e^{\bf{n}}\wedge\sigma )|_{X}=(e^{\bf{n}}\wedge (d^{Z})^{*}\sigma) |_{X}=0.
\end{aligned}\end{align}

Set
\begin{align}\begin{aligned}\label{e.579}
\Omega^{\bullet}_{\text{abs}}(Z, F|_{Z})&:=\{\sigma\in \Omega^{\bullet}(Z, F|_{Z})\big|
 \,\sigma \text{ satisfies (\ref{e.10})}\},\\
\Omega^{\bullet}_{\text{rel}}(Z, F|_{Z})&:=\{\sigma\in\Omega^{\bullet}(Z, F|_{Z})\big|
 \,\sigma \text{ satisfies (\ref{e.11})}\}.
\end{aligned}\end{align}
Then they can be regarded as two sub-bundles of $\Omega^{\bullet}(Z, F|_{Z})$.
From now on, we use the subscripts
\begin{align}\begin{aligned}\label{e.612}
\text{``bd''}=\left\{
\begin{array}{ll}
 \text{``abs'', } & \hbox{ the for absolute boundary conditions; }\\
 \text{``rel'', } & \hbox{ the for relative boundary conditions.}
\end{array}
\right.
\end{aligned}\end{align}

 Set
\begin{align}\begin{aligned}\label{e.29}
&(D^{Z})^{2}_{{\rm bd }}=(D^{Z})^{2}|_{\Omega^{\bullet}_{{\rm bd }}(Z, F|_{Z})}.
\end{aligned}\end{align}
 Thus $(D^{Z})^{2}_{{\rm bd }}$ is the operator with the relative or absolute boundary conditions on $X$, moreover
 it is essentially self-adjoint with respect to $h^{\Omega^{\bullet}(Z, F|_{Z})}$.

Let $\Omega^{j}(Z,Y,F|_{Z})=\Omega^{j}(Z,F|_{Z})\bigoplus\Omega^{j-1}(Y,F|_{Y})$. Let $i:X\hookrightarrow M$ denote the inclusion map of $X$ into $M$ (the same notation for the inclusion map of their fibers). The following complex and differential $\big(\Omega(Z,Y,F|_{Z}),d\big)$ computes the relative de Rham cohomology groups along the fiber:
\begin{align}\begin{aligned}\label{e.821}
&d:\Omega^{j}(Z,Y,F|_{Z})\rightarrow \Omega^{j+1}(Z,Y,F|_{Z}),\\
&d(\alpha,\beta):=(d^{X}\alpha,i^{*}\alpha-d^{Y}\beta).
\end{aligned}\end{align}
Then we have for $0\leq j\leq m$
\begin{align}\begin{aligned}\label{e.822}
H^{j}(Z,Y,F)=H^{j}(\Omega^{\bullet}(Z,Y,F|_{Z}),d).
\end{aligned}\end{align}

Now we define $d:\Omega(M,X,F)\rightarrow \Omega(M,X,F)$ by
\begin{align}\begin{aligned}\label{e.823}
d(\alpha,\beta):=(d^{M}\alpha,i^{*}\alpha-d^{X}\beta).
\end{aligned}\end{align}

Let $\pi_{\partial}:X\rightarrow S$. For $\gamma \in \Omega(S)$ and $(\alpha,\beta)\in \Omega(M,X,F)$, to make our sign conventions compatible with \cite[(1.51)]{BruMa06} we define
 \begin{align}\begin{aligned}\label{e.824}
(\pi^{*}\gamma)\wedge(\alpha,\beta):=\big((\pi^{*}\gamma)\wedge\alpha,(-1)^{\deg(\gamma)}(\pi^{*}_{\partial}\gamma)\wedge\beta\big).
\end{aligned}\end{align}

For $f\in C^{\infty}(S)$, by (\ref{e.823}) and (\ref{e.824}) we have
\begin{align}\begin{aligned}\label{e.825}
&d\big((\pi^{*}f)(\alpha,\beta)\big)=d\big((\pi^{*}f)\alpha,(\pi^{*}_{\partial}f)\beta\big)\\
=&\Big((\pi^{*}d^{S}f)\wedge\alpha+(\pi^{*}f) d^{M}\alpha,i^{*}((\pi^{*}f)\alpha)-(\pi^{*}_{\partial}d^{S}f)\wedge\beta-(\pi^{*}_{\partial}f) d^{X}\beta\Big)\\
=&\pi^{*}(d^{S}f)\wedge(\alpha,\beta)+(\pi^{*}f)\cdot d(\alpha,\beta),
\end{aligned}\end{align}
so $d:\Omega(S,\Omega(Z,Y,F|_{Z}))\rightarrow \Omega(S,\Omega(Z,Y,F|_{Z}))$ verifies the Leibniz rule and defines a flat superconnection on $\Omega(Z,Y,F|_{Z})$.

As in \cite[$\S$ IIa)]{BL}, there is a $\mathbb{Z}-$graded vector bundle $H(Z, F)=\bigoplus_{p=0}^{m}H^{p}(Z, F)$ over $S$
whose fiber at $b\in S$ is the absolute cohomology group $H(Z_{b}, F)$. And there is another $\mathbb{Z}-$graded vector bundle $H(Z, Y, F)=\bigoplus_{p=0}^{m}H^{p}(Z, Y, F)$
 over $S$ whose fiber at $b\in S$ is the relative cohomology group $H(Z_{b}, Y_{b}, F)$.
Following the argument of \cite[$\S$III.(f)]{BL}, the flat superconnection $d$ defined in (\ref{e.825}) on $\Omega(Z,Y,F|_{Z})$ induces the canonical flat connection $\nabla^{H(Z, Y, F)}$ on $H(Z,Y, F)$.

The complex $\big(\Omega(Z,F|_{Z}),d^{Z}\big)$ computes the absolute de Rham cohomology groups along the fiber (cf. \cite[$\S$5.9]{Taylor96}).
Then $d^{M}:\Omega(S,\Omega(Z,F|_{Z}))\rightarrow \Omega(S,\Omega(Z,F|_{Z}))$ is a flat superconnection on $\Omega(Z,F|_{Z})$, it induces the canonical flat connection $\nabla^{H(Z,F)}$ on $H(Z,F)$.
By the Hodge theorem (cf. \cite[Thm. 1.1]{BruMa12}),
there are isomorphisms of smooth $\mathbb{Z}-$graded vector bundles on $S$
\begin{align}\begin{aligned}\label{e.590}
H(Z, F)\cong \Ker((D^{Z})^{2}_{\text{abs}})\quad \text{ and }\quad H(Z, Y, F)\cong \Ker((D^{Z})^{2}_{\text{rel}}).
\end{aligned}\end{align}
Let $h_{L^{2}}^{H(Z, F)}$ (resp. $h_{L^{2}}^{H(Z, Y, F)}$) be the $L^{2}-$metric on $H(Z, F)$ (resp. $H(Z, Y, F)$) induced by that of $\Ker((D^{Z})^{2}_{\text{abs}})$ (resp. $\Ker((D^{Z})^{2}_{\text{rel}})$) as a subbundle of $(\Omega(Z,F|_{Z}),h^{\Omega(Z,F|_{Z})})$ through the first (resp. second)
isomorphism in (\ref{e.590}).

\subsection{Double formula for heat kernel in family case}\label{ss1.5}

By the definition of fibration, we have an open covering $\mathscr{U}$ of $S$, such that for $U\in \mathscr{U}$, $\pi^{-1}(U)\cong U\times Z$. We work fiberwisely on $Z_{b}\,(b\in U)$. Recall that for $t>0$ the superconnection $C_{t}$ on $\Omega^{\bullet}(Z,F|_{Z})$ is defined in (\ref{e.549}). By (\ref{e.550}), (\ref{e.556}), we set
\begin{align}\begin{aligned}\label{e.659}
\mathscr{F}_{\rm bd}:=-4(D^{2}_{1})_{\rm bd}=4(C^{2}_{1})_{\rm bd}=(D^{Z})^{2}_{\rm bd}+\mathscr{F}^{[+]},
\end{aligned}\end{align}
where $(D^{Z})^{2}_{\rm bd}$ is the fiberwise Hodge Laplacian and $\mathscr{F}^{[+]}\in \Omega^{(>0)}(S, \text{End}(\Omega^{\bullet}(Z,F|_{Z})))$
 represents the higher degree
 part of $\mathscr{F}_{\rm bd}$.

\begin{defn}\label{d.39}(\textbf{Double fibration})
Let
$
\overline{Z}:=Z\cup_{Y} Z'
$\index{$\overline{Z}$}
be the double manifold, where $Z'$ is a copy of $Z$.
We denote $\overline{M}\overset{\pi}\longrightarrow S$ to be the double fibration of $M \overset{\pi}\longrightarrow S$, such that
\begin{align}\begin{aligned}\label{e.414}
\overline{M}=M\cup_{X} M' \quad {\rm with\, standard\, fiber} \quad \overline{Z},
\end{aligned}\end{align}
and $M'$ is a copy of $M$. Let $\phi:\overline{M}\rightarrow \overline{M}$ denote the
nature involution map, which keeps the boundary $X$ and exchanges $M$ and $M'$, such that $\phi^{2}=\Id$. Thus the group
\begin{align}\begin{aligned}\label{e.615}
\mathbb{Z}_{2}=\{\Id, \, \phi\}
\end{aligned}\end{align}
acts on $\overline{M}$. Using the product structures of $g^{TZ}$ and $h^{F}$ nearby the boundary $X$ (see (\ref{e.598}), (\ref{e.4}) and (\ref{e.6})), we construct $\mathbb{Z}_{2}-$invariant
 objects $g^{T\overline{Z}}$, $h^{\overline{F}}$ on $\overline{M}$ by gluing
$g^{TZ}$ and $h^{F}$ on $M$ and $M'$, i.e.,
\begin{align}\begin{aligned}\label{e.415}
g^{T\overline{Z}}=g^{TZ}\cup_{X} g^{TZ} \quad {\rm and} \quad h^{\overline{F}}=h^{F}\cup_{X} h^{F}.
\end{aligned}\end{align}
\end{defn}

We use a superscript ``$\overline{\quad}$" for the corresponding objects on $\overline{M}$. For any $t>0$, let $\exp(-t\overline{\mathscr{F}})(x,x')\,(x,x'\in Z_{b})$ be the smooth kernel of the heat operator $\exp(-t\overline{\mathscr{F}})$(along the fiber $\overline{Z}_{b}$), which is $C^{\infty}$
in $(t,x,x')\in (0,\infty)\times \overline{Z}_{b}\times \overline{Z}_{b}$.
 In what follows, we will not distinguish $\pi^{-1}(U)$ and $U\times \overline{Z}$. In particular, since $\overline{\mathscr{F}}_{b}$ is a smooth family of
second order elliptic differential operator, it was showed in \cite[Prop. 2.8]{Bismut86} and also \cite[Thm. 9.50]{BGV92} that $\exp(-t\overline{\mathscr{F}}_{b})(x,x')$ is $C^{\infty}$ in
$(t,x,x',b)\in (0,\infty)\times \overline{Z}\times \overline{Z}\times U$. For $x,x'\in \overline{Z}_{b}$, $\exp(-t\overline{\mathscr{F}}_{b})(x,x')$ is a linear mapping from
$\Lambda (T^{*}_{x}\overline{Z})\otimes F_{x}$ into $\Lambda (T^{*}_{b}S)\otimes \Lambda (T^{*}_{x'}\overline{Z})\otimes F_{x'}$.

\begin{defn}\label{d.35}
For convenience, we set
\begin{align}\begin{aligned}\label{e.613}
{\rm \mathbf{bd}}=\left\{
\begin{array}{ll}
 0, & \hbox{ for the absolute boundary conditions; }\\
 1, & \hbox{ for the relative boundary conditions.}
\end{array}
\right.\end{aligned}\end{align}
\end{defn}\index{${\rm \mathbf{bd}}$}

\begin{lemma}\label{l1.1} {\rm(\textbf{Double formula of heat kernels})}
For $e^{-t\mathscr{F}_{\rm bd}}$ with one of the natural boundary conditions {\rm (\ref{e.10}) or (\ref{e.11})}, the
heat kernel of $e^{-t\mathscr{F}_{\rm bd}}$ exists and is unique. And for $x,x'\in Z$ we have
\begin{align}\begin{aligned}\label{1.47}
e^{-t\mathscr{F}_{\rm bd}}(x, x')=e^{-t\overline{\mathscr{F}}}(x, x')+(-1)^{\rm\mathbf{bd}}\phi^{*}_{\phi^{-1}(x)}
\left( e^{-t\overline{\mathscr{F}}}(\phi^{-1}(x), x')\right).
\end{aligned}\end{align}
\end{lemma}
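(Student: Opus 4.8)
The plan is to use a reflection (method of images) argument, carried out fiberwise, combined with a uniqueness statement for the heat kernel with boundary conditions. First I would fix $b\in S$ and work on the fiber $Z=Z_b$, suppressing $b$ from the notation; by the smoothness of the fiberwise heat kernel recalled after Definition \ref{d.39} all constructions will automatically depend smoothly on $b$, and the higher-degree part $\mathscr{F}^{[+]}$ of $\mathscr{F}_{\rm bd}$ (see (\ref{e.659})) commutes with the reflection since the product structures (\ref{e.598}), (\ref{e.4}), (\ref{e.6}) make $g^{T\overline Z}$, $h^{\overline F}$, and $T^H\overline M$ all $\mathbb Z_2$-invariant, so it suffices to treat the statement as an identity of operator kernels on $\overline Z$ restricted to $Z\times Z$. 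The existence and uniqueness of $e^{-t\mathscr{F}_{\rm bd}}$ with the absolute or relative boundary conditions is standard parabolic theory for the self-adjoint operator $(D^Z)^2_{\rm bd}$ perturbed by the nilpotent higher-degree term, so I would state it and move on to the identity.

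The heart of the argument is to define the candidate kernel
$$
K_t(x,x')=e^{-t\overline{\mathscr F}}(x,x')+(-1)^{\mathbf{bd}}\,\phi^{*}_{\phi^{-1}(x)}\!\left(e^{-t\overline{\mathscr F}}(\phi^{-1}(x),x')\right),\qquad x,x'\in Z,
$$
and to check three things: (i) $K_t$ solves the heat equation $(\partial_t+\mathscr F)K_t=0$ in $x$ with values in the appropriate bundle, which follows because $\phi$ is an isometry of $(\overline Z,g^{T\overline Z})$ preserving $h^{\overline F}$ and the horizontal structure, so $\phi^*$ intertwines $\overline{\mathscr F}$ with itself and the reflected term is again a solution; (ii) $K_t(\cdot,x')\to \delta_{x'}$ as $t\to 0^+$, because for $x'$ in the interior the reflected term is supported near $\phi^{-1}(x')\in Z'$ and contributes nothing to the initial condition on $Z$; and (iii) $K_t$ satisfies the boundary conditions (\ref{e.10}) or (\ref{e.11}) on $X$. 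For (iii) I would use that on the collar $X_{(-\varepsilon,0]}$ the involution is $\phi(x',x_m)=(x',-x_m)$, so in the splitting of forms into tangential and normal parts relative to $dx_m$, the pullback $\phi^*$ acts as $+1$ on the tangential part and $-1$ on the normal ($i_{e_{\bf n}}$-) part; hence at $x_m=0$ the sign $(-1)^{\mathbf{bd}}$ is exactly what makes $i_{e_{\bf n}}K_t|_X=0$ in the absolute case and $e^{\bf n}\wedge K_t|_X=0$ in the relative case, and the same parity computation applied to $d^Z K_t$ resp. $(d^Z)^*K_t$ — using that $d^{\overline Z}$ anticommutes appropriately with $\phi^*$ under the product structure — gives the second half of the boundary condition. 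By the uniqueness in the first part, $K_t=e^{-t\mathscr F_{\rm bd}}$, which is (\ref{1.47}).

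The main obstacle I expect is item (iii): making the parity-of-$\phi^*$ bookkeeping rigorous for the \emph{full} operator $\mathscr F_{\rm bd}$, not merely for the scalar Laplacian, i.e.\ checking that the higher-degree pieces $\mathscr F^{[+]}$ (which involve $c(T)$, $\nabla^{\Omega^\bullet(Z,F|_Z),u}$, and curvature terms) genuinely commute with $\phi^*$ on the collar and do not spoil the boundary condition. This is precisely where the product-structure hypotheses (\ref{e.761}) — equivalently (\ref{e.598}), (\ref{e.4}), (\ref{e.6}) — are used: near $X$ the horizontal distribution, the tensor $T$, and the vertical metric are all pulled back from $X$ and independent of $x_m$, so every term in $\mathscr F_{\rm bd}$ is $\phi$-invariant there, and the reflection argument goes through with no correction terms. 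Once this local compatibility is in hand, (i) and (ii) are routine, and the result follows. I would also remark, as is standard, that (\ref{1.47}) reduces the study of the boundary heat kernel and its small-time asymptotics to that of the closed double $\overline M\to S$, which is what makes it useful for extending the torsion form to the boundary case in Section \ref{ss1.7}.
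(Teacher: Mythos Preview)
Your proposal is correct and follows essentially the same method-of-images argument as the paper: define the candidate kernel, verify that it solves the heat equation via the $\mathbb Z_2$-invariance of $\overline{\mathscr F}$, check the initial condition, check the boundary conditions by the parity computation of $\phi^*$ on the collar, and conclude by uniqueness. The paper carries out the initial-condition step slightly more carefully (by doubling the test section to a $C^0$ section on $\overline Z$ and using the heat kernel on the closed double, rather than arguing that the reflected term contributes nothing for interior points), but the structure and content of the argument are the same.
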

\begin{proof}(1) First, we show that for $t>0$ the right side of (\ref{1.47}) gives a heat kernel of $e^{-t\mathscr{F}_{\rm bd}}$. Since the geometric
 objects $g^{T\overline{Z}}$, $h^{\overline{F}}$, $\nabla^{\overline{F}}$ are $\mathbb{Z}_{2}-$invariant, the action of $\phi$ commutes with
$\overline{\mathscr{F}}$, i.e.,
\begin{align}\begin{aligned}\label{1.58}
 \overline{\mathscr{F}}\phi^{*}=\phi^{*}\overline{\mathscr{F}},
 \end{aligned}\end{align}
 so the right side of (\ref{1.47}) satisfies the heat equation.
 Next we verify that the right side of (\ref{1.47}) satisfies the initial condition when $t\rightarrow 0$, i.e.,
\begin{align}\begin{aligned}\label{1.59}
\lim_{t\rightarrow 0}\int_{Z}\left(e^{-t\overline{\mathscr{F}}}(x, x')+(-1)^{{\rm \textbf{bd}}}
\phi^{*}_{\phi^{-1}(x)}\big( e^{-t\overline{\mathscr{F}}}(\phi^{-1}(x), x')\big)\right)s(x')dv_{Z}(x')=s(x).
\end{aligned}\end{align}
For a section $s(x)\in \Lambda(T^{*}S)\otimes \Omega^{\bullet}_{{\rm bd }}(Z, F|_{Z})$, we define its double on $\overline{Z}$ to be
\begin{align}\begin{aligned}\label{1.60}
\overline{s}(x):=\left\{
\begin{array}{ll}
 s(x), & \quad x\in Z, \\
 (-1)^{{\rm \textbf{bd}}}\phi^{*}\Big(s\big(\phi^{-1}(x)\big)\Big), & \quad x\in Z'.
\end{array}
\right.
\end{aligned}\end{align}
Using the boundary conditions (\ref{e.10}) or (\ref{e.11}), we have
$\overline{s}(x)\in \Lambda(T^{*}S)\otimes C^{0}\big(\overline{Z}, \Lambda(T^{*}\overline{Z})\otimes \overline{F}\big)$ and
\begin{align}\begin{aligned}\label{e.764}
\left(\phi^{*}\overline{s}\right)(x)=(-1)^{{\rm \textbf{bd}}}\overline{s}(x),\quad \text{for all}\, x\in \overline{Z}.
\end{aligned}\end{align}

Then for $x\in Z$, we have
\begin{align}\begin{aligned}\label{1.61}
_{}\lim_{t\rightarrow 0}\int_{\overline{Z}}e^{-t\overline{\mathscr{F}}}(x, x')\overline{s}(x')dv_{\overline{Z}}(x')=\overline{s}(x)=s(x).
\end{aligned}\end{align}
By (\ref{1.60}), we have
\begin{align}\begin{aligned}\label{1.62}
&\int_{\overline{Z}}e^{-t\overline{\mathscr{F}}}(x, x')\overline{s}(x')dv_{\overline{Z}}(x')\\
&=\int_{Z}e^{-t\overline{\mathscr{F}}}(x,x') s(x')dv_{Z}(x')+(-1)^{{\rm \textbf{bd}}}\int_{Z'}e^{-t\overline{\mathscr{F}}}(x,x') \phi^{*}\Big(s\big(\phi^{-1}(x')\big)\Big)dv_{Z'}(x').
\end{aligned}\end{align}

By (\ref{1.58}), we get $\exp(-t\overline{\mathscr{F}})=\phi^{*}\exp(-t\overline{\mathscr{F}})\left(\phi^{*}\right)^{-1}$ as operators, thus
we have
\begin{align}\begin{aligned}\label{e.765}
e^{-t\overline{\mathscr{F}}}(x,x')=\phi^{*}_{\phi^{-1}(x)}e^{-t\overline{\mathscr{F}}}(\phi^{-1}(x),\phi^{-1}(x'))\left(\phi^{*}\right)^{-1}_{x'}.
\end{aligned}\end{align}
In fact, we have
$$\begin{aligned}
\big(e^{-t\overline{\mathscr{F}}}s\big)(x)&=\phi^{*}\big(e^{-t\overline{\mathscr{F}}}((\phi^{*})^{-1}\cdot s)\big)(x)
=\int_{\overline{Z}}\phi^{*}_{x}\cdot e^{-t\overline{\mathscr{F}}}(\phi^{-1}(x),x')(\phi^{*})^{-1}_{x'}s(\phi(x'))dv_{\overline{Z}}(x')\\
&=\int_{\overline{Z}}\phi^{*}_{x}\cdot e^{-t\overline{\mathscr{F}}}(\phi^{-1}(x),\phi^{-1}(x'))(\phi^{*})^{-1}_{x'}s(x')dv_{\overline{Z}}(x').
\end{aligned}$$
By (\ref{e.765}) we get
\begin{align}\begin{aligned}\label{1.65}
\int_{Z'}&e^{-t\overline{\mathscr{F}}}(x, x') \phi^{*}\Big(s\big(\phi^{-1}(x)\big)\Big)dv_{Z'}(x')\\
&=\int_{Z'}\phi^{*}_{\phi^{-1}(x)}e^{-t\overline{\mathscr{F}}}(\phi^{-1}(x), \phi^{-1}(x')) s(\phi^{-1}(x'))
dv_{Z'}(x')\\
&=\int_{Z}\phi^{*}_{\phi^{-1}(x)}e^{-t\overline{\mathscr{F}}}(\phi^{-1}(x), z) s(z)dv_{Z}(z), \quad x'=\phi(z).
\end{aligned}\end{align}
From (\ref{1.61}), (\ref{1.62}) and (\ref{1.65}), we get (\ref{1.59}).\\

Next we try to show that the right side of (\ref{1.47}) verifies the natural boundary conditions (\ref{e.10}) or (\ref{e.11}).
 Let $Y_{[-\varepsilon, \varepsilon]}$ be the product neighborhood of $Y\subset \overline{Z}$ .
For $(y, x_{m})\in Y_{[-\varepsilon, \varepsilon]}$, we have
\begin{align}\begin{aligned}\label{1.66}
 (y, x_{m})\overset{\phi}\longrightarrow (y, -x_{m})\quad \text{and} \quad \phi^{*}dx_{m}=-dx_{m}.
\end{aligned}\end{align}
For $x\in Y_{[-\varepsilon, \varepsilon]}$, $e^{-t\overline{\mathscr{F}}}(x, x')\in \Lambda(T^{*}S)\otimes \big(\Lambda (T^{*}\overline{Z})\otimes \overline{F}\big)_{x}\otimes\big(\Lambda (T^{*}\overline{Z})\otimes \overline{F}\big)^{*}_{x'}$ can be expressed as
\begin{align}\begin{aligned}\label{1.67}
 e^{-t\overline{\mathscr{F}}}(x, x')=dx_{m}\wedge f\big((y, x_{m}),x'\big)+ g\big((y, x_{m}),x'\big), \quad x=(y, x_{m}),
\end{aligned}\end{align}
such that
\begin{align}\begin{aligned}\label{1.68}
(\phi^{*}_{x} f)\big((y, x_{m}),x'\big)&= f\big((y, -x_{m}),x'\big), \\
(\phi^{*}_{x} g)\big((y, x_{m}),x'\big)&= g\big((y, -x_{m}),x'\big).
\end{aligned}\end{align}
For (\ref{1.66}), (\ref{1.67}) and (\ref{1.68}), we get that for $x=(y, x_{m})\in Y_{[-\varepsilon, \varepsilon]}$
\begin{align}\begin{aligned}\label{1.69}
\phi^{*}_{x}e^{-t\overline{\mathscr{F}}}(\phi^{-1}(x), x')=-dx_{m}\wedge f\big((y, -x_{m}),x'\big)+ g\big((y, -x_{m}),x'\big).
\end{aligned}\end{align}
From (\ref{1.67}) and (\ref{1.69}), we see that for the absolute boundary conditions
\begin{align}\begin{aligned}\label{1.70}
i_{\frac{\partial}{\partial x_{m}}}&\left(e^{-t\overline{\mathscr{F}}}(x, x')+\phi^{*}_{x}e^{-t\overline{\mathscr{F}}}(\phi^{-1}(x), x')\right)_{x_{m}=0}\\
&=\Big(f((y, x_{m}),x')-f((y, -x_{m}),x')\Big)_{x_{m}=0}=0.
\end{aligned}\end{align}
Now we have verified the first equality of (\ref{e.10}). For the second we have on $Y_{[-\varepsilon, \varepsilon]}$
\begin{align}\begin{aligned}\label{1.71}
d^{Z}=dx_{m}\wedge\frac{\partial}{\partial x_{m}}+d^{Y},
\end{aligned}\end{align}
then it follows from (\ref{1.67}), (\ref{1.69}) and (\ref{1.71}) that
\begin{align}\begin{aligned}\label{1.72}
i_{\frac{\partial}{\partial x_{m}}}d^{Z}&\left(e^{-t\overline{\mathscr{F}}}(x, x')+\phi^{*}_{x}e^{-t\overline{\mathscr{F}}}(\phi^{-1}(x), x')\right)_{x_{m}=0}\\
&=\Big(-(d^{Y}f)\big((y, x_{m}),x'\big)+(d^{Y}f)\big((y, -x_{m}),x'\big)\\
&\quad\quad\quad\quad\quad\quad\quad\quad+\frac{\partial g}{\partial x_{m}}\big((y, x_{m}),x'\big)-\frac{\partial g}{\partial x_{m}}\big((y, -x_{m}),x'\big)\Big)_{x_{m}=0}\\
&=0.
\end{aligned}\end{align}
In a similar way, we trait the case of relative boundary conditions. Now we have shown that the right side of (\ref{1.47})
is a heat kernel of $e^{-t\mathscr{F}_{\rm bd}}$.
The double formula (\ref{1.47}) is a natural consequence of the uniqueness of the heat kernel of $e^{-t\mathscr{F}_{\rm bd}}$.
\end{proof}

For $t>0$, we define the rescaling operator $\psi_{t}\in \text{End}(\Omega(S))$ such that for $\alpha \in \Omega^{k}(S)$
\begin{align}\begin{aligned}\label{e.731}
\psi_{t}\alpha=t^{-k/2}\alpha.
\end{aligned}\end{align}
Then by \cite[Prop. 3.17]{BGo} and (\ref{e.550}), we have
\begin{align}\begin{aligned}\label{e.732}
D_{t}=\sqrt{t}\psi^{-1}_{t}D_{1}\psi_{t}.
\end{aligned}\end{align}

\begin{lemma}\label{l.46} We have the double formula of the kernel of $4D^{2}_{1}e^{4tD^{2}_{1}}$ with the natural boundary conditions (\ref{e.10}) or (\ref{e.11}),
\begin{align}\begin{aligned}\label{1.74}
f'(D_{t})(x, x')=f'(\overline{D}_{t})( x, x')
+(-1)^{{\rm \mathbf{bd}}}\phi^{*}_{\phi^{-1}(x)}\left(f'(\overline{D}_{t})\right)(\phi^{-1}(x), x').
\end{aligned}\end{align}
\end{lemma}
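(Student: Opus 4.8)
The plan is to reduce the statement about $f'(D_t)$ to the double formula for the heat kernel already established in Lemma \ref{l1.1}, together with the scaling relation \eqref{e.732}. First I would observe that since $f(a)=a\exp(a^2)$, we have $f'(a) = (1+2a^2)\exp(a^2)$, which is an even function; hence there is a holomorphic function $h$ with $f'(a) = h(a^2)$, and concretely $f'(D_t) = h(D_t^2)$ where $D_t^2 = -C_t^2$. By \eqref{e.659} and \eqref{e.550}, the operator $4D_1^2 = -\mathscr{F}_{\rm bd}$ on $\Omega^\bullet_{\rm bd}(Z,F|_Z)$, so $f'(D_1)$ is (up to the rescaling) a function of $\mathscr{F}_{\rm bd}$ of the form $P(\mathscr{F}_{\rm bd})e^{-\frac14\mathscr{F}_{\rm bd}}$ for a polynomial $P$ coming from the $(1+2a^2)$ factor. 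The key point is that functional calculus applied to $\mathscr{F}_{\rm bd}$ commutes with the doubling construction exactly as the heat operator does, because the doubled operator $\overline{\mathscr{F}}$ satisfies $\overline{\mathscr{F}}\phi^* = \phi^*\overline{\mathscr{F}}$ by \eqref{1.58}.

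The main steps I would carry out are as follows. (1) Using \eqref{e.732}, write $f'(D_t) = h(D_t^2) = h(t\,\psi_t^{-1}D_1^2\psi_t) = \psi_t^{-1}h(tD_1^2)\psi_t$, and similarly $f'(\overline D_t) = \overline\psi_t^{-1}h(t\overline D_1^2)\overline\psi_t$ on the double; since the rescaling operators $\psi_t$ act only on the $\Omega(S)$ factor they are compatible with $\phi^*$, so it suffices to prove the identity at $t$ replaced appropriately, i.e. to compare $h(tD_1^2)$ with $h(t\overline D_1^2)$. (2) Express $h(tD_1^2)$ as an integral against the heat kernel: since $h(a^2) = f'(a)$ with $f'(a) = (1+2a^2)e^{a^2}$ and $4D_1^2 = -\mathscr{F}_{\rm bd}$, we get $f'(D_t) = \bigl(\Id - \tfrac{t}{2}\mathscr{F}_{\rm bd}\bigr)e^{-\frac{t}{4}\mathscr{F}_{\rm bd}}$ acting on $\Omega^\bullet_{\rm bd}(Z,F|_Z)$, whose Schwartz kernel is obtained from $e^{-\frac{t}{4}\mathscr{F}_{\rm bd}}(x,x')$ by applying the differential operator $\Id - \tfrac{t}{2}\mathscr{F}_{\rm bd}$ in the $x$-variable. (3) Apply Lemma \ref{l1.1} (with $t$ replaced by $t/4$) to $e^{-\frac{t}{4}\mathscr{F}_{\rm bd}}(x,x')$, and then apply the $x$-variable operator $\Id - \tfrac{t}{2}\mathscr{F}_{\rm bd}$ to both terms on the right-hand side; because $\mathscr{F}_{\rm bd}$ restricted to a neighbourhood of a point coincides with $\overline{\mathscr{F}}$ and because $\phi^*$ intertwines $\overline{\mathscr{F}}$ with itself by \eqref{1.58}, the differential operator passes through $\phi^*_{\phi^{-1}(x)}$ and through the substitution $x \mapsto \phi^{-1}(x)$, producing exactly $f'(\overline D_t)(x,x') + (-1)^{\rm\mathbf{bd}}\phi^*_{\phi^{-1}(x)}\bigl(f'(\overline D_t)\bigr)(\phi^{-1}(x),x')$.

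The step I expect to require the most care is (3): one must check that the higher-degree part $\mathscr{F}^{[+]}$ of $\mathscr{F}_{\rm bd}$ (the differential-form valued piece of the superconnection curvature) also satisfies the doubling compatibility, i.e. that the $\mathbb{Z}_2$-invariance of $g^{T\overline Z}$, $h^{\overline F}$, $\nabla^{\overline F}$, together with the product structure \eqref{e.598}–\eqref{e.6} near $X$, forces $\mathscr{F}^{[+]}$ to commute with $\phi^*$ in the same way the Hodge Laplacian does; this is implicit in \eqref{1.58} but should be spelled out. The remaining verification — that applying $\Id - \tfrac{t}{2}\mathscr{F}_{\rm bd}$ in the $x$-variable to the reflected kernel $\phi^*_{\phi^{-1}(x)}\bigl(e^{-\frac{t}{4}\overline{\mathscr{F}}}(\phi^{-1}(x),x')\bigr)$ yields $\phi^*_{\phi^{-1}(x)}$ applied to the same operator acting on $e^{-\frac{t}{4}\overline{\mathscr{F}}}$ at $\phi^{-1}(x)$ — is then a direct consequence of the chain rule and \eqref{1.58}, and the boundary conditions \eqref{e.10}, \eqref{e.11} are inherited exactly as in the proof of Lemma \ref{l1.1}. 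Finally, re-inserting the rescaling operators $\psi_t$ from step (1) gives \eqref{1.74}.
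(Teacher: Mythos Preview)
Your approach is essentially the same as the paper's: reduce to the heat-kernel double formula of Lemma \ref{l1.1}, use that $\phi^{*}$ commutes with $\overline{D}_{1}^{2}$ (equivalently with $\overline{\mathscr{F}}$) to pass the differential operator through the reflection, and then invoke the rescaling \eqref{e.732} together with $f'(x)=(1+2x^{2})e^{x^{2}}$. The paper compresses all of this into three lines, simply noting that $\phi^{*}$ is isometric (hence $\phi^{*}D_{1}^{2}=D_{1}^{2}\phi^{*}$), writing the double formula for $4D_{1}^{2}e^{4tD_{1}^{2}}$ directly from Lemma \ref{l1.1}, and concluding; your more explicit discussion of $\mathscr{F}^{[+]}$ and the chain-rule verification in step (3) is exactly what underlies that one-line commutation statement.
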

\begin{proof} Since $\phi^{*}$ is isometric, we have $\phi^{*}D^{2}_{1}=D^{2}_{1}\phi^{*}$. By (\ref{e.659}) and Lemma \ref{l1.1}, we have
\begin{align}\begin{aligned}\label{e.763}
\left(4D^{2}_{1}e^{4tD^{2}_{1}}\right)(x, x')
=\left(4\overline{D}^{2}_{1}e^{4t\overline{D}^{2}_{1}}\right)&( x, x')\\
&+(-1)^{{\rm \mathbf{bd}}}\phi^{*}_{\phi^{-1}(x)}\left(4\overline{D}^{2}_{1}e^{4t\overline{D}^{2}_{1}}\right)(\phi^{-1}(x), x').
\end{aligned}\end{align}
Then (\ref{1.74}) follows from (\ref{e.732}), (\ref{e.763}) and $f'(x)=(1+2x^{2})e^{x^{2}}$.
\end{proof}

\subsection{Analytic torsion form in the case with boundary}\label{ss1.7}

Recall that $m=\dim Z$. Let $\text{Pf}:so(m)\rightarrow \mathbb{R}$ denote the Pfaffian. Let $R^{TZ}$ be the curvature of $\nabla^{TZ}$ defined in (\ref{e.551}). Set
\begin{align}\begin{aligned}\label{e.562}
e(TZ, \nabla^{TZ})=\text{Pf}\Big[\frac{R^{TZ}}{2\pi}\Big].
\end{aligned}\end{align}
Then $e(TZ, \nabla^{TZ})$ is an $o(TZ)-$value closed m-form on $M$. In the case of boundary, following Br\"{u}ning-Ma \cite[(1.43)]{BruMa06} we define the relative Euler form of $TZ$ associated to $\nabla^{TZ}$, $E(TZ,\nabla^{TZ})=\big(e(TZ,\nabla^{TZ}),e_{b}(X,\nabla^{TZ})\big)\in \big(\Omega^{m}(M,o(TZ)),\Omega^{m-1}(X,o(TZ))\big)$, then we have
\begin{enumerate}
  \item If $m$ is even and (\ref{e.4}) holds, then $e_{b}(X,\nabla^{TZ})=0$.
  \item If $m$ is odd, then $e(TZ,\nabla^{TZ})=0$ and $e_{b}(X,\nabla^{TZ})=\frac{1}{2}e(TY, \nabla^{TY})$.
\end{enumerate}

Put
\begin{align}\begin{aligned}\label{e.563}
\chi(Z)=\sum_{i=0}^{m}(-1)^{i}\rk (H^{i}(Z, \mathbb{C})), \quad \chi'(Z, F)=\sum_{i=0}^{m}(-1)^{i}i\rk( H^{i}(Z, F)).
\end{aligned}\end{align}
Then $\chi(Z)$ is the Euler characteristic of $Z$.
We define a notation $H_{{\rm bd }}(Z,F)$ such that
\begin{align}\begin{aligned}\label{e.616}
H_{{\rm bd }}(Z,F)=\left\{
\begin{array}{ll}
 H(Z, F) & \hbox{ for the absolute boundary conditions;} \\
 H(Z, Y, F) & \hbox{ for the relative boundary conditions.}
\end{array}
\right.\end{aligned}\end{align}
 For $f(a)$ in (\ref{e.762}), as in Definition \ref{d.23}, we put
\begin{align}\begin{aligned}\label{e.567}
f(\nabla^{F}, h^{F})&=(2i\pi)^{1/2}\varphi \tr\Big[f\left(\frac{\omega}{2}(F, h^{F})\right)\Big]\in \Omega (M), \\
f(\nabla^{H_{{\rm bd }}(Z, F)}, h_{L^{2}}^{H_{{\rm bd }}(Z, F)})&=(2i\pi)^{1/2}\varphi \tr_{s}\Big[f\left(\frac{\omega}{2}\big(H_{{\rm bd }}(Z, F), h_{L^{2}}^{H_{{\rm bd }}(Z, F)}\big)\right)\Big]\in \Omega (S).
\end{aligned}\end{align}
For any $ t>0$, the operator $D_{t}$ in (\ref{e.549}) is a first-order fiberwise-elliptic differential operator, then $f(D_{t})$ is a fiberwise trace class operator. For $t>0$, we put:
\begin{align}\begin{aligned}\label{e.568}
&f(C'_{t}, h^{\Omega^{\bullet}(Z,F|_{Z})})=(2i\pi)^{1/2}\varphi \tr_{s}[f(D_{t})]\in \Omega (S), \\
&f^{\wedge}(C'_{t}, h^{\Omega^{\bullet}(Z,F|_{Z})}):=\varphi \tr_{s}\left[\frac{N}{2}f'(D_{t})\right]=\varphi \tr_{s}\left[\frac{N}{2}(1+2D_{t}^{2})e^{D_{t}^{2}}\right].\end{aligned}\end{align}

\begin{thm}\label{t1.2} For any $t>0$, the form $f^{\wedge}(C'_{t}, h^{\Omega^{\bullet}(Z,F|_{Z})})$ is real and even. Moreover,
\begin{align}\begin{aligned}\label{1.84}
\frac{\partial}{\partial t}f(C'_{t}, h^{\Omega^{\bullet}(Z,F|_{Z})})=\frac{1}{t}df^{\wedge}(C'_{t}, h^{\Omega^{\bullet}(Z,F|_{Z})}).
\end{aligned}\end{align}
\end{thm}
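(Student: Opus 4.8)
The plan is to follow the strategy of Bismut--Lott's proof of their transgression formula (\cite[Thm.~3.16]{BL}), adapted to the present setting via the double formula (Lemma~\ref{l1.1}) so that all the bookkeeping takes place on the boundaryless double fibration $\overline{M}\overset{\pi}\rightarrow S$. First I would establish the reality and evenness of $f^{\wedge}(C'_{t},h^{\Omega^{\bullet}(Z,F|_{Z})})$. Since $f'(a)=(1+2a^{2})e^{a^{2}}$ is an even holomorphic function of $a$, there is a holomorphic $g$ with $f'(a)=g(a^{2})$, so $f'(D_{t})=g(D_{t}^{2})=g(-C_{t}^{2})$ by \eqref{e.550}; arguing as in \cite[Thm.~2.16]{BL} (or Definition~\ref{d.23} and \cite[Thm.~1.8]{BL}) the supertrace $\varphi\tr_{s}[\tfrac{N}{2}f'(D_{t})]$ is real and even because $D_{t}^{2}$ is self-adjoint and the $\mathbb{Z}_2$--grading argument on form degree applies. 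The only subtlety is that $D_{t}$ now carries boundary conditions, which is precisely where Lemma~\ref{l1.1} enters: the relevant kernels on $Z$ are expressed through the $\mathbb{Z}_2$--equivariant kernels on $\overline{Z}$, and $\phi^{*}$ is isometric, so the reality/evenness on $\overline{M}$ descends.

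The core is the transgression identity \eqref{1.84}. Here I would introduce, following \cite[\S3]{BL}, the superconnection $C_{t}$ and the operator $D_{t}$, and compute $\tfrac{\partial}{\partial t}f(C'_{t},h^{\Omega^{\bullet}(Z,F|_{Z})})$ using $\tfrac{\partial}{\partial t}C'_{t}=-\tfrac{1}{2t}[N,C'_{t}]$ and the analogous formula for $C''_{t}$, which give $\tfrac{\partial}{\partial t}D_{t}=-\tfrac{1}{2t}[N,D_{t}]_{+}$-type expressions once one is careful about the grading (cf.\ \cite[(3.41)--(3.43)]{BL}). Differentiating $f(D_{t})=(2i\pi)^{1/2}\varphi\tr_{s}[f(D_{t})]$ under the trace and using Duhamel, together with the fact that $\tr_{s}$ kills supercommutators, one arrives at $\tfrac{\partial}{\partial t}f(C'_{t},\cdot)=\tfrac1t d\bigl(\varphi\tr_{s}[\tfrac{N}{2}f'(D_{t})]\bigr)$. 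To make the manipulations with $\tr_{s}$, Duhamel's formula and the exterior differential $d$ on $S$ rigorous in the boundary case, I would transport everything to $\overline{Z}$ via Lemma~\ref{l1.1}: $f(D_{t})(x,x')$ and $f'(D_{t})(x,x')$ on $Z$ are sums of $\overline{D}_{t}$--kernels and their $\phi^{*}$--twists (Lemma~\ref{l.46} gives exactly the needed double formula for $f'(D_{t})$), and on $\overline{M}$ the Bismut--Lott computation applies verbatim because there is no boundary. Integrating over the fiber commutes with $d$ on the closed double fiber $\overline{Z}_b$, and the two contributions (identity part and $\phi$--part) each satisfy \eqref{1.84}, so their combination does too.

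The main obstacle I anticipate is justifying the differentiation under the fiberwise supertrace and the commutation of $d$ with the fiber integral \emph{in the presence of boundary conditions}, i.e.\ verifying that the formal Bismut--Lott argument is not spoiled by boundary terms. The resolution is the device already set up in Section~\ref{ss1.5}: the double formulas (Lemmas~\ref{l1.1} and~\ref{l.46}) reduce every trace, every Duhamel expansion, and every application of Stokes' theorem along the fiber to the closed manifold $\overline{Z}_b$, where $\exp(-t\overline{\mathscr{F}}_b)(x,x')$ and hence $f'(\overline{D}_t)(x,x')$ are jointly smooth in $(t,x,x',b)$ by \cite[Prop.~2.8]{Bismut86}, \cite[Thm.~9.50]{BGV92}. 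Once this reduction is in place, the proof is a line-by-line transcription of \cite[Thm.~3.16]{BL}, with the rescaling relation \eqref{e.732} handling the $t$--dependence and the factor $\tfrac1t$ in \eqref{1.84}, and with $\phi^{*}$ being an isometry commuting with $\overline{D}_1^{2}$ ensuring that the $\phi$--twisted terms contribute symmetrically. A minor additional check is that the higher-degree part $\mathscr{F}^{[+]}$ in \eqref{e.659} does not obstruct any of the trace-class and smoothness statements, which again follows from the analysis on $\overline{M}$.
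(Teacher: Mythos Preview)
Your approach is correct but takes a genuinely different route from the paper. The paper's proof is a one-line reference: it observes that the argument of \cite[Thm.~2.11]{BL} together with the deformation argument of \cite[Thm.~1.9]{BL} goes through verbatim, because the identity \eqref{1.84} is a purely algebraic consequence of the superconnection formalism (the relations $C'_{t}=t^{N/2}d^{M}t^{-N/2}$, $C''_{t}=t^{-N/2}(d^{M})^{*}t^{N/2}$, flatness, and the vanishing of supertraces on supercommutators). In particular the paper does \emph{not} invoke the double formula here at all; the boundary conditions enter only through the trace-class property of $f(D_t)$ and $f'(D_t)$, not through the formal manipulations that produce \eqref{1.84}. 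Your route---pushing everything to $\overline{Z}$ via Lemmas~\ref{l1.1} and \ref{l.46}, splitting into the identity piece and the $\phi$-twisted piece, and then quoting the boundaryless Bismut--Lott transgression (and its $\mathbb{Z}_2$-equivariant version) for each---is a legitimate alternative that makes the absence of boundary terms explicit rather than implicit. It costs more bookkeeping (the $\mathbb{Z}_2$-invariance of the diagonal integrands so that $\int_Z=\tfrac12\int_{\overline{Z}}$, and the equivariant transgression for the $\phi$-part) but buys a cleaner justification of the trace identities if one is worried about Stokes-type contributions. One small correction: the transgression formula in \cite{BL} is not Theorem~3.16 (that is the small/large-$t$ asymptotics); the relevant references are \cite[Thm.~2.11]{BL} and the deformation argument of \cite[Thm.~1.9]{BL}, exactly as the paper cites.
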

\begin{proof} Proceeding as in the proof of \cite[Thm. 2.11]{BL}, we get (\ref{1.84}) by using the deformation argument in \cite[Thm. 1.9]{BL}.
\end{proof}

For the fibration of the boundary $\pi_{\partial}: X \rightarrow S$ with the objects $T^{H}X,g^{TY}, \nabla^{F|_{X}}, h^{F|_{X}}$ induced by
 $T^{H}M,g^{TZ}, \nabla^{F}, h^{F}$, we define $\widetilde{C}_{t}$, $\widetilde{D}_{t}$ ($t>0$) as in (\ref{e.549}). We will use the superscript ``$\widetilde{\quad}$" for the corresponding objects on $X$.

To define the Bismut-Lott torsion form in the case with boundary,
the main difference is to establish the following theorem corresponding to \cite[Thm. 3.16]{BL} in the case without
boundary.

\begin{thm}\label{t1.1}
For $\pi:M\rightarrow S$ (see Figure \ref{fig.1}) and any $t>0$, the form $f(C'_{t}, h^{\Omega^{\bullet}(Z,F|_{Z})})$ is real, odd and closed. Its de Rham cohomology class is independent of the choices of $t, \, T^{H}M, \, g^{TZ}$ and $h^{F}$, which have the product structures on the same neighborhood $X_{[-\varepsilon,0]}$. For the natural boundary conditions (\ref{e.10}) or (\ref{e.11}), we have
\begin{enumerate}
 \item As $t\rightarrow 0$,
\begin{align}\begin{aligned}\label{1.93}
 f(C'_{t}, h^{\Omega^{\bullet}(Z, F|_{Z})})=\int_{Z}&e(TZ, \nabla^{TZ})f(\nabla^{F}, h^{F})\\
&+ (-1)^{{\rm \mathbf{bd}}}\frac{1}{2}\int_{Y}e(TY, \nabla^{TY})f(\nabla^{F}, h^{F}) +\mathcal{O}(\sqrt{t}).
\end{aligned}\end{align}
 \item As $t\rightarrow +\infty$,
 \begin{align}\label{1.94}
   f(C'_{t}, h^{\Omega^{\bullet}(Z, F|_{Z})})=f(\nabla^{H_{{\rm bd }}(Z,F)}, h^{H_{{\rm bd }}(Z,F)})+\mathcal{O}(\frac{1}{\sqrt{t}}).
\end{align}
We remark that if $m$ is even {\rm (resp.  odd)}, then
 \begin{align}\label{e.766}
 e(TY, \nabla^{TY})=0,\quad (\text{\rm resp.}\,  e(TZ, \nabla^{TZ})=0).
\end{align}
\end{enumerate}
\end{thm}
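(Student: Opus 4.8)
\textbf{Proof proposal for Theorem \ref{t1.1}.}
The plan is to reduce every assertion to the corresponding statement of Bismut--Lott \cite[Thm. 3.16]{BL} for the closed double fibration $\overline{M}\overset{\pi}\rightarrow S$, using the $\mathbb{Z}_2$-equivariant decomposition furnished by the double formula (\ref{1.74}) of Lemma \ref{l.46}. First I would fix one of the two boundary conditions (say absolute, $\mathbf{bd}=0$; the relative case is identical up to the sign $(-1)^{\mathbf{bd}}$). Since $\mathscr{F}_{\rm bd}=(D^Z)^2_{\rm bd}+\mathscr{F}^{[+]}$ is self-adjoint and positive with the natural boundary condition, $f(C'_t,h^{\Omega^\bullet(Z,F|_Z)})=(2i\pi)^{1/2}\varphi\tr_s[f(D_t)]$ is well defined; reality and oddness follow verbatim from \cite[Thm. 3.16]{BL} applied fiberwise, the key point being that $D_t$ is, up to rescaling $\psi_t$ (\ref{e.732}), the self-adjoint operator $D_1$ whose kernel splits by (\ref{1.74}). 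Closedness and the $t$-independence of the cohomology class follow from Theorem \ref{t1.2}: equation (\ref{1.84}) gives $\frac{\partial}{\partial t}f(C'_t,\cdot)=\frac1t\, d f^\wedge(C'_t,\cdot)$, so $f(C'_t,\cdot)$ is closed and its class is $t$-independent; the independence of $T^HM$, $g^{TZ}$, $h^F$ among those with product structure on the fixed collar $X_{[-\varepsilon,0]}$ is obtained by the standard transgression argument of \cite[$\S$3]{BL}, carried out on the double $\overline{M}$ where the deformed data remain $\mathbb{Z}_2$-invariant, and then restricted back to $M$ using (\ref{1.74}).

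For the small-time asymptotic (1), I would take the fiberwise supertrace of the double formula (\ref{1.74}) integrated over the diagonal of $Z$. The first term $f'(\overline D_t)(x,x)$ integrated over $Z$ contributes, as $t\to0$, one half of the local index density of the closed manifold $\overline Z$, i.e. $\int_Z e(T\overline Z,\nabla^{T\overline Z})\,f(\nabla^{\overline F},h^{\overline F})$, by the local family index theorem of Bismut--Lott \cite[Thm. 3.16]{BL} applied to $\overline M\to S$; since the glued data agree with the original ones on $M$ this is exactly $\int_Z e(TZ,\nabla^{TZ})f(\nabla^F,h^F)+\mathcal O(\sqrt t)$. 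The second, ``reflected'' term $\phi^*_{\phi^{-1}(x)}f'(\overline D_t)(\phi^{-1}(x),x)$ concentrates, as $t\to0$, on the fixed-point set $Y=\overline Z^{\,\phi}$, because off $Y$ the distance $d(\phi^{-1}(x),x)$ is bounded below and the heat kernel is exponentially small there; near $Y$ one uses the product structure (\ref{e.4}), (\ref{e.6}), (\ref{e.598}) to model $\overline{\mathscr{F}}$ on $Y_{\mathbb R}$ and compute the localized supertrace. This is the Lefschetz-type fixed-point computation of Bismut--Zhang and Br\"uning--Ma; its outcome is $\frac12\int_Y e(TY,\nabla^{TY})f(\nabla^F,h^F)$ up to $\mathcal O(\sqrt t)$, with the overall sign $(-1)^{\mathbf{bd}}$ inherited from (\ref{1.74}). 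Combining the two contributions yields (\ref{1.93}).

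For the large-time limit (2), by (\ref{e.732}) and (\ref{1.74}) the operator $D_t$ is conjugate to $\sqrt t\,D_1$, and on the orthogonal complement of $\Ker(D^Z)^2_{\rm bd}$ the rescaled heat operator decays like $e^{-ct}$; on $\Ker(D^Z)^2_{\rm bd}$, which by the Hodge theorem (\ref{e.590}) is $H_{\rm bd}(Z,F)$, the superconnection $C'_t$ reduces to the finite-dimensional flat superconnection computing $f(\nabla^{H_{\rm bd}(Z,F)},h^{H_{\rm bd}(Z,F)})$, exactly as in \cite[Thm. 3.16]{BL}. This gives (\ref{1.94}) with error $\mathcal O(1/\sqrt t)$. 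Finally (\ref{e.766}) is immediate: if $m$ is even then $\dim Y$ is odd so $e(TY,\nabla^{TY})=0$, and if $m$ is odd then $e(TZ,\nabla^{TZ})=0$ since $\dim Z$ is odd. The main obstacle I expect is the fixed-point (reflected-term) computation in part (1): one must justify the concentration on $Y$ uniformly in the base parameter $b\in S$ and in the auxiliary Grassmann variables of $\Omega^\bullet(S)$, and extract the precise local density $\tfrac12 e(TY,\nabla^{TY})f(\nabla^F,h^F)$ with the correct sign and normalization, which requires a careful Getzler-type rescaling adapted to the reflection $\phi$ together with the product assumptions (\ref{e.759})--(\ref{e.361}) on the collar.
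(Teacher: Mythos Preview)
Your proposal is correct and follows essentially the same route as the paper: reduce to the closed double $\overline{M}$ via the double formula, split the small-time supertrace into an interior term handled by \cite[Thm.~3.16]{BL} and a reflected term concentrated near $Y$ by off-diagonal heat-kernel decay, and treat $t\to\infty$ exactly as in \cite{BL}. Two small points: the double formula you need is for $f(D_t)=D_t e^{D_t^2}$ rather than $f'(D_t)$ (same argument as Lemma~\ref{l.46}), and rather than invoking an abstract Lefschetz-type result, the paper handles the reflected term by an explicit elementary computation---finite propagation speed reduces to the cylinder $Y_{\mathbb{R}}$, where the kernel factors as a normal Gaussian times the $Y$-kernel, and the Gaussian integral produces the factor $\tfrac12$ (cf.\ (\ref{1.78})--(\ref{1.81})).
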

\begin{proof} Proceeding as in the proof \cite[Prop. 1.3, Thm. 1.8]{BL}, it follows that $f(C'_{t}, h^{\Omega^{\bullet}(Z, F|_{Z})})$ is real, odd and closed. By Theorem \ref{t1.2}, we see that the de Rham cohomology class of $f(C'_{t}, h^{\Omega^{\bullet}(Z, F|_{Z})})$ is independent of $t$. Let $(T'^{H}M, g'^{TZ}, h'^{F})$ be another triple of arguments verifying the assumption (\ref{e.761}) of product structures on the same product neighborhood $X_{[-\varepsilon,0]}$ as $(T^{H}M, g^{TZ}, h^{F})$. We connect the two triples linearly by a smooth path $\gamma(t):=(T^{H}_{t}M, g^{TZ}_{t}, h^{F}_{t})$($t\in [0,1]$) such that $\gamma(0)=(T^{H}M, g^{TZ}, h^{F})$, $\gamma(1)=(T'^{H}M, g'^{TZ}, h'^{F})$, then $(T^{H}_{t}M, g^{TZ}_{t}, h^{F}_{t})$ verify the assumption (\ref{e.761}) on $X_{[-\varepsilon,0]}$ for all $t\in [0,1]$. Following the proof of \cite[Thm. 1.9]{BL}, we see that the de Rham cohomology class is independent of $T^{H}M, g^{TZ}, h^{F}$.

For the infinite cylinder $X_{\mathbb{R}}$, let $\psi:X_{\mathbb{R}}\rightarrow X$ be the projection map and
$F_{c}:=\psi^{*}(F|_{X})$ be the flat vector bundle on $X_{\mathbb{R}}$
with a flat connection $\nabla^{F_{c}}:=\psi ^{*}(\nabla ^{F}|_{X})$. Now we extend the operator $D_{t}$ in (\ref{e.549}) from $X_{[-\varepsilon,\varepsilon]}\subset M$ to $X_{\mathbb{R}}$ by
 \begin{align}\label{e.767}
 D_{c, t}:=\widetilde{D}_{t}-\frac{\sqrt{t}}{2}\widehat{c}(dx_{m})\frac{\partial}{\partial x_{m}},
\end{align}
where $\widetilde{D}_{t}$ is the operator associated to $(X, F)$.
 It follows from Lemma \ref{l.46} that
\begin{align}\begin{aligned}\label{e.476}
&f(C'_{t}, h^{\Omega^{\bullet}(Z, F|_{Z})})=(2i\pi)^{1/2}\varphi \int_{Z}\tr_{s}\left[f(D_{t})(x, x)\right]dv_{Z}\\
&= (2i\pi)^{1/2}\varphi \int_{Z}\tr_{s}\left[f(\overline{D}_{t})(x, x)\right]dv_{Z}\\
&\quad \quad \quad \qquad+ (-1)^{{\rm \mathbf{bd}}}(2i\pi)^{1/2}\varphi \int_{Z}\tr_{s}\left[\left(\phi^{*}_{\phi^{-1}(x)}f(\overline{D}_{t})(\phi^{-1}(x), x')\right)\big|_{x=x'}\right]dv_{Z}\\
&:=I_{1}(t)+(-1)^{{\rm \mathbf{bd}}}I_{2}(t).\end{aligned}\end{align}
For the first term $I_{1}$, by \cite[Thm. 3.16]{BL} we have
\begin{align}\begin{aligned}\label{e.477}
I_{1}(t)=\left\{\begin{array}{ll}
         \int_{Z}e(TZ, \nabla^{TZ})f(\nabla^{F}, h^{F})+\mathcal{O}(t)  & \hbox{if $m$ is even;} \\
         \mathcal{O}(\sqrt{t}) & \hbox{if $m$ is odd.}
        \end{array}
\right.\end{aligned}\end{align}
For $\varepsilon>0$, let $Z_{\frac{\varepsilon}{2}}:=Z\backslash Y _{[-\frac{\varepsilon}{2},0]}$. We set
\begin{align}\begin{aligned}\label{1.50}
&I_{3}(t)=(2i\pi)^{1/2}\varphi\int_{Z_{\frac{\varepsilon}{2}}}\tr_{s}\Big[\left(\phi^{*}_{\phi^{-1}(x)}f(\overline{D}_{t})(\phi^{-1}(x), x')\right)\big|_{x=x'}\Big]dv_{Z}, \\
&I_{4}(t)=(2i\pi)^{1/2}\varphi\int_{Y _{ [-\frac{\varepsilon}{2},0]}}\tr_{s}\Big[\left(\phi^{*}_{\phi^{-1}(x)}f(\overline{D}_{t})(\phi^{-1}(x), x')\right)\big|_{x=x'}\Big]dv_{Z}.
\end{aligned}\end{align}
Then we have
\begin{align}\begin{aligned}\label{e.618}
I_{2}(t)=I_{3}(t)+I_{4}(t).
\end{aligned}\end{align}
Since ${\rm d}(x, \phi^{-1}(x))\geq \varepsilon $ for $x \in Z_{\frac{\varepsilon}{2}}$, by applying the off-diagonal estimate of the heat kernel,
for $T>0$ there exist constants $C, c>0$, such that for any $0<t<T$ and $ x \in Z_{\frac{\varepsilon}{2}}$
\begin{align}\begin{aligned}\label{e.478}
\Big|\left(\overline{D}_{t}\exp(\overline{D}^{2}_{t})\right)(\phi^{-1}(x), x)\Big|_{\mathscr{C}^{0}}\leq Ce^{-c\frac{\varepsilon^{2}}{t}}.
\end{aligned}\end{align}
It follows that $I_{3}(t)=\mathcal{O}(e^{-c/t})$, then by (\ref{e.476}), (\ref{e.477}) and (\ref{e.478}) we get
\begin{align}\begin{aligned}\label{1.77}
f(C'_{t}, h^{\Omega^{\bullet}(Z, F|_{Z})})=
\left\{\begin{array}{ll}
         \int_{Z}e(TZ, \nabla^{TZ})f(\nabla^{F}, h^{F})+(-1)^{{\rm \mathbf{bd}}}I_{4}(t)+\mathcal{O}(t)  & \hbox{if $m$ is even;} \\
         (-1)^{{\rm \mathbf{bd}}}I_{4}(t)+\mathcal{O}(\sqrt{t}) & \hbox{if $m$ is odd.}
        \end{array}
\right.\end{aligned}\end{align}
By using the finite propagation speed property of the wave equation (cf. \cite[Appendix D.2]{MaMa07}), we compare $f(\overline{D}_{t})( \phi^{-1}(x), x)$ restricted on
$Y_{[-\frac{\varepsilon}{2}, \frac{\varepsilon}{2}]}\subset \overline{Z}$ and
 $f(D_{c, t})(\phi^{-1}(x), x)$ restricted on $Y_{[-\frac{\varepsilon}{2}, \frac{\varepsilon}{2}]} \subset Y_{ \mathbb{R}}$, since we have $\overline{D}^{2}_{t}|_{Y_{[-\varepsilon,\varepsilon]}}=D^{2}_{c, t}|_{Y_{[-\varepsilon,\varepsilon]}}=\widetilde{D}^{2}_{t}-\frac{t}{4}\frac{\partial^{2}}{\partial x^{2}_{m}}$.
It follows that for any $0<t<T$ and $x \in Y_{[-\frac{\varepsilon}{2}, \frac{\varepsilon}{2}]}$
\begin{align}\begin{aligned}\label{1.78}
\big|f(\overline{D}_{t})(\phi^{-1}(x), x)-f(D_{c, t})(\phi^{-1}(x), x )\big|_{\mathscr{C}^{0}}\leq Ce^{-c\frac{\varepsilon^{2}}{4t}},
\end{aligned}\end{align}
hence by (\ref{1.50}) and (\ref{1.78}) we have
\begin{align}\begin{aligned}\label{1.79}
I_{4}(t)=(2i\pi)^{1/2}\varphi \int_{-\frac{\varepsilon}{2}}^{0}\int_{Y}\tr_{s}\left[\left(\phi^{*}_{\phi^{-1}(x)}f(D_{c, t})(\phi^{-1}(x), x')
\right)\big|_{x=y}\right]&dv_{Y}dx_{m}\\&+\mathcal{O}(e^{-c/t}).
\end{aligned}\end{align}
Let $\{\xi_{I}, \xi_{I}\wedge e^{\bf{n}} \}$ be an orthonormal frame of $\Lambda(N^{*})\otimes\Lambda (T^{*}Y)\otimes F$ on $X _{[-\frac{\varepsilon}{2},0]}$, then
\begin{align}\begin{aligned}\label{e.768}
\phi^{*}(\xi_{I})=\xi_{I},\quad\phi^{*}(e^{\bf{n}})=-e^{\bf{n}}\, \text{and}\quad \phi[(y, x_{m})]=(y, -x_{m})\in Y _{[-\frac{\varepsilon}{2}, 0]}.
 \end{aligned}\end{align}
Then by (\ref{e.767}), (\ref{e.768}) we have
\begin{align}\begin{aligned}\label{1.80}
&\tr_{s}\left[\left(\phi^{*}_{\phi^{-1}(x)}f(D_{c, t})(\phi^{-1}(x), x')\right)\big|_{x=x'}\right]\\
&=\sum (-1)^{|I|}\left(\Big((\widetilde{D}_{t}-\frac{\sqrt {t}}{2} \widehat{c}(dx_{m}))e^{\widetilde{D}^{2}_{t}-\frac{t}{4}\frac{\partial ^{2}}{\partial x^{2}_{m}}}\Big)(\phi^{-1}(x), x )\xi_{I}, \phi^{*}_{x}(\xi_{I})\right)\\
  &+\sum (-1)^{|I|+1}\left(\Big((\widetilde{D}_{t}-\frac{\sqrt {t}}{2} \widehat{c}(dx_{m}))e^{\widetilde{D}^{2}_{t}-\frac{t}{4}\frac{\partial ^{2}}{\partial x^{2}_{m}}}\Big)(\phi^{-1}(x), x )\xi_{I}\wedge e^{\bf{n}}, \phi^{*}_{x}(\xi_{I}\wedge e^{\bf{n}})\right)\\
  &=\sum (-1)^{|I|}\left(\Big(\widetilde{D}_{t}e^{\widetilde{D}^{2}_{t}-\frac{t}{4}\frac{\partial ^{2}}{\partial x^{2}_{m}}}\Big)(\phi^{-1}(x), x )\xi_{I}, \xi_{I}\right)\\
  &+\sum (-1)^{|I|+1}\left(\Big(\widetilde{D}_{t}e^{\widetilde{D}^{2}_{t}-\frac{t}{4}\frac{\partial ^{2}}{\partial x^{2}_{m}}}\Big)(\phi^{-1}(x), x )\xi_{I}\wedge e^{\bf{n}}, -\xi_{I}\wedge e^{\bf{n}}\right)\\
&=2e^{-\frac{t}{4}\frac{\partial ^{2}}{\partial x^{2}_{m}}}(-x_{m}, x_{m})\cdot \tr_{s}\left[\left(\widetilde{D}_{t}e^{\widetilde{D}_{t}^{2}}\right)(y, y)\right].
\end{aligned}\end{align}
To get the second equality of (\ref{1.80}), we have used the fact that $\widehat{c}(dx_{m})$ exchanges the $\mathbb{Z}_{2}-$graduation
of $\Lambda(N^{*})$.
By \cite[Thm. 3.16]{BL} and (\ref{1.80}), we get as $t\rightarrow 0$
\begin{align}\begin{aligned}\label{1.81}
I_{4}(t)
&=2 \int_{-\frac{\varepsilon}{2}}^{0}\frac{e^{-\frac{4x^{2}_{m}}{t}}}{\sqrt{\pi t}}dx_{m}\cdot (2i\pi)^{1/2}\varphi\int_{Y}\tr_{s}\left[\left(\widetilde{D}_{t}e^{\widetilde{D}_{t}^{2}}\right)(y, y)\right]dv_{Y}(y)+\mathcal{O}(e^{-c/t})\\
&=\left\{
\begin{array}{ll}
\mathcal{O}(\sqrt{t})  & \hbox{if $m$ is even;}\\
\frac{1}{2}\int_{Y}e(TY, \nabla^{TY})f(\nabla^{F}, h^{F})+\mathcal{O}(t) & \hbox{if $m$ is odd.}
\end{array}
\right.\end{aligned}\end{align}
Finally from (\ref{1.77}) and (\ref{1.81}), we get (\ref{1.93}). The proof of (\ref{1.94}) is exactly the same as that of \cite[(3.85)]{BL}.
\end{proof}

Put
\begin{align}\begin{aligned}\label{e.37}
&\chi'(Z, F)=\sum ^{m}_{p=0}(-1)^{p}p \cdot \rk(H^{p}(Z, F)),\\
& \chi'(Z,Y, F)=\sum ^{m}_{p=0}(-1)^{p}p \cdot \rk(H^{p}(Z, Y, F)).
\end{aligned}\end{align}\index{$\chi'_{{\rm bd }}(Z, F),\,\chi'(Z, F),\,\chi'(Z,Y, F)$}

We define some notations such that
\begin{align}\begin{aligned}\label{e.621}
\chi_{{\rm bd }}(Z)=\left\{
\begin{array}{ll}
 \chi(Z,\mathbb{C})  & \hbox{ for absolute boundary conditions; } \\
 \chi(Z, Y,\mathbb{C})  & \hbox{ for relative boundary conditions.}
\end{array}
\right.\\
\chi'_{{\rm bd }}(Z, F)=\left\{
\begin{array}{ll}
 \chi'(Z, F)  & \hbox{for absolute boundary conditions; } \\
 \chi'(Z, Y, F)  & \hbox{for relative boundary conditions.}
\end{array}
\right.
\end{aligned}\end{align}

\begin{thm}\label{t1.4} For the natural boundary conditions {\rm (\ref{e.10}) or (\ref{e.11})} as $t\rightarrow 0$,
\begin{align}\label{1.95}
 f^{\wedge}(C'_{t}, h^{\Omega^{\bullet}(Z,F|_{Z})})=\frac{1}{4}m\chi_{{\rm bd}}(Z)\rk(F) +\mathcal{O}(\sqrt{t}),
\end{align}
 as $t\rightarrow +\infty$,
 \begin{align}\label{1.96}
 f^{\wedge}(C'_{t}, h^{\Omega^{\bullet}(Z,F|_{Z})})=\frac{\chi'_{\rm bd}(Z, F)}{2}+\mathcal{O}(\frac{1}{\sqrt{t}}).
\end{align}
\end{thm}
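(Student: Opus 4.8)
The proof mirrors that of Theorem \ref{t1.1}, with $f(D_{t})$ replaced by $f'(D_{t})$ and the insertion of $\frac{N}{2}$. By Lemma \ref{l.46} one writes $f^{\wedge}(C'_{t}, h^{\Omega^{\bullet}(Z,F|_{Z})})=J_{1}(t)+(-1)^{{\rm \mathbf{bd}}}J_{2}(t)$, where $J_{1}(t)=\varphi\int_{Z}\tr_{s}[\frac{N}{2}f'(\overline{D}_{t})(x,x)]\,dv_{Z}$ is the interior term coming from the closed double $\overline{M}\overset{\pi}\rightarrow S$ and $J_{2}(t)=\varphi\int_{Z}\tr_{s}[\frac{N}{2}(\phi^{*}_{\phi^{-1}(x)}f'(\overline{D}_{t})(\phi^{-1}(x),x'))|_{x=x'}]\,dv_{Z}$ is the reflected term. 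Since $\phi$ is isometric and commutes with $N$, the change of variable $x=\phi(z)$ gives $\int_{\overline{Z}}\tr_{s}[\frac{N}{2}f'(\overline{D}_{t})(x,x)]dv_{\overline{Z}}=2\int_{Z}\tr_{s}[\frac{N}{2}f'(\overline{D}_{t})(x,x)]dv_{Z}$, hence $J_{1}(t)=\frac{1}{2}f^{\wedge}(\overline{C}'_{t}, h^{\Omega^{\bullet}(\overline{Z},\overline{F})})$.

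For the large-time estimate (\ref{1.96}) one repeats the argument used for the large-time part of Theorem \ref{t1.1}, i.e. that of \cite[(3.85)]{BL}: the operator $(D^{Z})^{2}_{{\rm bd }}$ has a spectral gap above its kernel, so by the Hodge isomorphism (\ref{e.590}) and finite-dimensional Hodge theory $f^{\wedge}(C'_{t},h^{\Omega^{\bullet}(Z,F|_{Z})})$ converges, with remainder $\mathcal{O}(1/\sqrt{t})$, to a form whose $0$-degree component --- coming only from $f'(0)=1$ on the harmonic part --- equals $\frac{1}{2}\sum_{p=0}^{m}(-1)^{p}p\,\rk(H^{p}_{{\rm bd }}(Z,F))=\frac{1}{2}\chi'_{\rm bd}(Z,F)$.

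For the small-time estimate (\ref{1.95}) one treats $J_{1}$ and $J_{2}=J_{3}+J_{4}$ as in the proof of Theorem \ref{t1.1}. Bismut--Lott's small-time expansion of $f^{\wedge}$ on the closed fibration $\overline{M}\rightarrow S$ (\cite[Thms. 3.20, 3.21]{BL}), whose $0$-degree term is governed by the fibrewise Hodge-star identity $\tr_{s}[(N-\frac{m}{2})e^{-s(D^{\overline{Z}})^{2}}]=0$, gives in degree $0$
\begin{align}\begin{aligned}\label{e.pfJ1}
J_{1}(t)_{[0]}=\frac{m}{8}\chi(\overline{Z})\rk(F)+\mathcal{O}(\sqrt{t})=\Big(\frac{m}{4}\chi(Z)-\frac{m}{8}\chi(Y)\Big)\rk(F)+\mathcal{O}(\sqrt{t}),
\end{aligned}\end{align}
using $\chi(\overline{Z})=2\chi(Z)-\chi(Y)$; the leading local term is present when $m$ is even and absorbed into $\mathcal{O}(\sqrt{t})$ when $m$ is odd. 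Since ${\rm d}(x,\phi^{-1}(x))\geq\varepsilon$ on $Z_{\frac{\varepsilon}{2}}$, the off-diagonal estimate (\ref{e.478}) gives $J_{3}(t)=\mathcal{O}(e^{-c/t})$. For the collar term $J_{4}(t)$ over $Y_{[-\frac{\varepsilon}{2},0]}$, finite propagation speed replaces $f'(\overline{D}_{t})$ by $f'(D_{c,t})$ on $Y_{\mathbb{R}}$ up to $\mathcal{O}(e^{-c/t})$; as $f'$ is even, $f'(D_{c,t})$ depends only on $D_{c,t}^{2}=\widetilde{D}_{t}^{2}-\frac{t}{4}\frac{\partial^{2}}{\partial x_{m}^{2}}$ on $Y_{[-\varepsilon,\varepsilon]}$, so it acts trivially on $\Lambda(N^{*})$ and, in the limit $t\to 0$, its relevant part on the collar is $f'(\widetilde{D}_{t})$ tensored with the scalar heat kernel of $\frac{t}{4}\frac{\partial^{2}}{\partial x_{m}^{2}}$, equal to $\frac{1}{\sqrt{\pi t}}e^{-4x_{m}^{2}/t}$ at $(-x_{m},x_{m})$. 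Writing $N=N^{Y}+N^{\bf{n}}$ with $N^{\bf{n}}$ the number operator of $\Lambda(N^{*})$ and using $\phi^{*}(e^{\bf{n}})=-e^{\bf{n}}$, so that $\tr_{s}^{\Lambda(N^{*})}[\phi^{*}]=2$ and $\tr_{s}^{\Lambda(N^{*})}[\phi^{*}N^{\bf{n}}]=1$, the computation (\ref{1.80}) adapts to
\begin{align}\begin{aligned}\label{e.pfJ4}
J_{4}(t)=\Big(\int_{-\frac{\varepsilon}{2}}^{0}\frac{e^{-4x_{m}^{2}/t}}{\sqrt{\pi t}}dx_{m}\Big)\Big(2f^{\wedge}(\widetilde{C}'_{t}, h^{\Omega^{\bullet}(Y,F|_{Y})})+\frac{1}{2}\varphi\tr_{s}[f'(\widetilde{D}_{t})]\Big)+\mathcal{O}(\sqrt{t}),
\end{aligned}\end{align}
where $\tr_{s}$ is the fibrewise supertrace for $\pi_{\partial}:X\rightarrow S$. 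As $t\to 0$, the prefactor tends to $\frac{1}{4}$; by \cite[Thms. 3.20, 3.21]{BL} applied to $\pi_{\partial}$ the $0$-degree part of $f^{\wedge}(\widetilde{C}'_{t},h^{\Omega^{\bullet}(Y,F|_{Y})})$ tends to $\frac{m-1}{4}\chi(Y)\rk(F)$, while $\varphi\tr_{s}[f'(\widetilde{D}_{t})]_{[0]}=\rk(F)\chi(Y)$ for all $t$ by McKean--Singer ($\tr_{s}[e^{-s(D^{Y})^{2}}]=\rk(F)\chi(Y)$). Hence $J_{4}(t)_{[0]}=\frac{1}{4}\big(\frac{m-1}{2}+\frac{1}{2}\big)\chi(Y)\rk(F)+\mathcal{O}(\sqrt{t})=\frac{m}{8}\chi(Y)\rk(F)+\mathcal{O}(\sqrt{t})$.

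Assembling the three contributions, the $0$-degree part of $f^{\wedge}(C'_{t},h^{\Omega^{\bullet}(Z,F|_{Z})})$ is $\big(\frac{m}{4}\chi(Z)-\frac{m}{8}\chi(Y)\big)\rk(F)+(-1)^{{\rm \mathbf{bd}}}\frac{m}{8}\chi(Y)\rk(F)+\mathcal{O}(\sqrt{t})$, which equals $\frac{1}{4}m\,\chi(Z)\rk(F)+\mathcal{O}(\sqrt{t})$ for the absolute and $\frac{1}{4}m\big(\chi(Z)-\chi(Y)\big)\rk(F)+\mathcal{O}(\sqrt{t})=\frac{1}{4}m\,\chi(Z,Y)\rk(F)+\mathcal{O}(\sqrt{t})$ for the relative boundary conditions, i.e. $\frac{1}{4}m\,\chi_{{\rm bd }}(Z)\rk(F)+\mathcal{O}(\sqrt{t})$ in both cases; this is (\ref{1.95}). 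I expect the main obstacle to be the collar computation leading to (\ref{e.pfJ4}) --- keeping track of the number operator $\frac{N}{2}=\frac{N^{Y}}{2}+\frac{N^{\bf{n}}}{2}$ under the reflection $\phi^{*}$, and checking that the $m$-even and $m$-odd cases (in which $\chi(Y)$, respectively $\chi(\overline{Z})$, drops out and the corresponding leading local term is absorbed into $\mathcal{O}(\sqrt{t})$) both collapse to the stated formula.
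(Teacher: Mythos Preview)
Your approach---redoing the double-formula/collar analysis of Theorem~\ref{t1.1} with $f(D_{t})$ replaced by $\tfrac{N}{2}f'(D_{t})$---is sound in outline but takes a much longer route than the paper. The paper's proof is one sentence: transplant the argument of \cite[Thm.~3.21]{BL}, which deduces the $f^{\wedge}$-asymptotics from the $f$-asymptotics by passing to the auxiliary fibration $M\times\mathbb{R}^{*}_{+}\to S\times\mathbb{R}^{*}_{+}$ with rescaled vertical metric $g^{TZ}/s$ (so that the $ds$-component of the $f$-form on the enlarged base recovers $f^{\wedge}$ on $S$), and replace the input \cite[Thm.~3.16]{BL} by Theorem~\ref{t1.1}, already established in the boundary case. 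This yields both (\ref{1.95}) and (\ref{1.96}) simultaneously in all form degrees, with no repeat of the collar computation.

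Your direct computation is correct in degree~$0$: the bookkeeping with $\chi(\overline{Z})=2\chi(Z)-\chi(Y)$, the supertraces $\tr_{s}^{\Lambda(N^{*})}[\phi^{*}]=2$ and $\tr_{s}^{\Lambda(N^{*})}[\phi^{*}N^{\bf n}]=1$, and the McKean--Singer identity for $\tr_{s}[f'(\widetilde{D}_{t})]_{[0]}$ are all right. But (\ref{1.95}) asserts that the \emph{full} form equals a constant modulo $\mathcal{O}(\sqrt{t})$, and you only verify this in degree~$0$. In your $J_{4}$-formula the term $\tfrac{1}{2}\varphi\tr_{s}[f'(\widetilde{D}_{t})]$ has higher-degree components whose small-$t$ behaviour you do not address; unlike the degree-$0$ part these are not constant in $t$, and showing they are $\mathcal{O}(\sqrt{t})$ requires additional local-index-type analysis. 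The paper's route sidesteps this entirely, since Theorem~\ref{t1.1} already controls the full form.
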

\begin{proof}
By using Theorem \ref{t1.1}, the proof is essentially the same as \cite[Thm. 3.21]{BL}.
 \end{proof}

\begin{defn}\label{d.6}{\bf (Analytic Torsion Form)}\\ The analytic torsion form
$\mathscr{T}_{{\rm bd }}(T^{H}M, g^{TZ}, h^{F})\in \Omega(S)$ with the natural boundary conditions {\rm (\ref{e.10}) or (\ref{e.11})} is defined by
\begin{align}\begin{aligned}\label{e.43}
\mathscr{T}_{{\rm bd }}&(T^{H}M, g^{TZ}, h^{F})=-\int_{0}^{+\infty}\left[f^{\wedge}(C'_{t}, h^{\Omega^{\bullet}(Z,F|_{Z})})-\frac{\chi'_{{\rm bd }}(Z, F)}{2}f'(0)\right.\\
&\qquad \quad\qquad\qquad  \quad\left.-\Big(\frac{1}{4}m\rk(F)\chi_{{\rm bd }}(Z)-\frac{\chi'_{{\rm bd }}(Z, F)}{2}\Big)f'\Big(\frac{i\sqrt{t}}{2}\Big)\right]\frac{dt}{t}.\end{aligned}\end{align}
\end{defn}\index{$\mathscr{T}_{{\rm bd }}(T^{H}M, g^{TZ}, h^{F})$}

It follows from Theorem \ref{t1.4} that the integrand of (\ref{e.43}) is integrable on $[0, \infty]$. The above definition extends the analytic torsion form of Bismut and Lott (cf. \cite[Def. 3.22]{BL}) to the boundary case.
The next theorem is the differential form version of Bismut-Lott's ``Riemann-Roch-Grothendieck" theorem in the case with boundary.
\begin{thm}\label{t.3} The torsion form $\mathscr{T}_{{\rm bd }}(T^{H}M, g^{TZ}, h^{F})$ is even and real. Moreover, we have
\begin{align}\begin{aligned}\label{e.49}
d \mathscr{T}_{{\rm bd }}(T^{H}M, &g^{TZ}, h^{F})
=\int_{Z}e(TZ, \nabla^{TZ})f(\nabla^{F}, h^{F})\\
&+(-1)^{{\rm \mathbf{bd}}}\frac{1}{2}\int_{Y}e(TY, \nabla^{TY})f(\nabla^{F}, h^{F})-f(\nabla^{H_{{\rm bd }}(Z,F)}, h^{H_{{\rm bd }}(Z,F)}).
\end{aligned}\end{align}
\end{thm}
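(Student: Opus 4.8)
The strategy is the transgression argument of Bismut--Lott \cite{BL}, with the small-time expansion replaced by the boundary version in Theorem \ref{t1.1}. The reality and evenness of $\mathscr{T}_{{\rm bd }}(T^{H}M,g^{TZ},h^{F})$ will be immediate: Theorem \ref{t1.2} says $f^{\wedge}(C'_{t},h^{\Omega^{\bullet}(Z,F|_{Z})})$ is real and even for each $t>0$, while the two subtracted terms in (\ref{e.43}) are real scalars, since $f'(0)=1$ and $f'(i\sqrt{t}/2)=(1-\tfrac{t}{2})e^{-t/4}$; integrability of the integrand on $(0,+\infty)$ was already recorded after Definition \ref{d.6}, using (\ref{1.95})--(\ref{1.96}).

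For $0<\varepsilon<T<+\infty$ I would introduce the truncated torsion form $\mathscr{T}^{[\varepsilon,T]}_{{\rm bd }}$, obtained from (\ref{e.43}) by replacing $\int_{0}^{+\infty}$ with $\int_{\varepsilon}^{T}$. Since $d$ annihilates the scalar correction terms, and $df^{\wedge}(C'_{t},h^{\Omega^{\bullet}(Z,F|_{Z})})\,\tfrac{dt}{t}=\tfrac{\partial}{\partial t}f(C'_{t},h^{\Omega^{\bullet}(Z,F|_{Z})})\,dt$ by Theorem \ref{t1.2}, interchanging $d$ with the compact $t$-integral and applying the fundamental theorem of calculus gives
\[
d\,\mathscr{T}^{[\varepsilon,T]}_{{\rm bd }}=-\int_{\varepsilon}^{T}\frac{\partial}{\partial t}f(C'_{t},h^{\Omega^{\bullet}(Z,F|_{Z})})\,dt=f(C'_{\varepsilon},h^{\Omega^{\bullet}(Z,F|_{Z})})-f(C'_{T},h^{\Omega^{\bullet}(Z,F|_{Z})}).
\]
Now I let $\varepsilon\to 0$ and $T\to+\infty$. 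Theorem \ref{t1.1}(1) identifies the limit of the first term with $\int_{Z}e(TZ,\nabla^{TZ})f(\nabla^{F},h^{F})+(-1)^{{\rm \mathbf{bd}}}\tfrac12\int_{Y}e(TY,\nabla^{TY})f(\nabla^{F},h^{F})$, and Theorem \ref{t1.1}(2) identifies the limit of the second with $f(\nabla^{H_{{\rm bd }}(Z,F)},h^{H_{{\rm bd }}(Z,F)})$; this yields (\ref{e.49}) provided the left-hand side also converges, to $d\,\mathscr{T}_{{\rm bd }}(T^{H}M,g^{TZ},h^{F})$.

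The one delicate point is precisely this convergence on the left. It reduces to showing $\mathscr{T}^{[\varepsilon,T]}_{{\rm bd }}\to\mathscr{T}_{{\rm bd }}(T^{H}M,g^{TZ},h^{F})$ in $C^{\infty}(S)$, for then $d$ passes to the limit. That in turn follows once the asymptotic expansions of Theorems \ref{t1.1} and \ref{t1.4} are known to hold uniformly on compact subsets of $S$, with estimates on $S$-derivatives and on the $t$-derivatives of the remainders; this is the form in which the underlying heat-kernel estimates (finite propagation speed, off-diagonal decay, Duhamel expansions and the rescaling (\ref{e.732}), as in \cite{BL}, \cite{BGV92}) come out of the proofs of those theorems, so no analytic input beyond what was used there is required. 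The only structural novelty, the boundary term $(-1)^{{\rm \mathbf{bd}}}\tfrac12\int_{Y}e(TY,\nabla^{TY})f(\nabla^{F},h^{F})$, enters exclusively through the modified $t\to0$ limit (\ref{1.93}).
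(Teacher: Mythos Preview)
Your proposal is correct and takes essentially the same approach as the paper: the paper's proof is a single sentence stating that the theorem is a direct consequence of Theorems \ref{t1.1}, \ref{t1.2} and Definition \ref{d.6}, which is exactly the transgression argument you have written out in detail. Your explicit discussion of the truncated integral $\mathscr{T}^{[\varepsilon,T]}_{{\rm bd}}$ and of the $C^{\infty}$ convergence needed to commute $d$ with the limit simply unpacks what the paper leaves implicit.
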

\begin{proof}
This theorem is a direct consequence of Theorems \ref{t1.1}, \ref{t1.2} and Definition \ref{d.6}.
\end{proof}

\section{ Fiberwise well-defined Morse function and spectral sequences}\label{s.2}
 Let  $h:M\rightarrow \mathbb{R}$ be a fiberwise Morse function, such that $h|_{X}$ is also a fiberwise Morse
function and verifies the Smale transversality conditions. Let $\big(C^{\bullet}(W^{u}_{Z}, F), \widetilde{\partial}, \nabla^{C^{\bullet}(W^{u}_{Z}, F)}\big)$
be the flat vector bundles on $S$ of the Thom-Smale complex
 associated to $h$.

Let $\big(C^{\bullet}(W^{u}_{Z_{1}}, F), \widetilde{\partial}, \nabla^{C^{\bullet}(W^{u}_{Z_{1}}, F)}\big)$ (resp. $\big(C^{\bullet}(W^{u}_{Z_{2}}/W^{u}_{Y}, F), \widetilde{\partial}, \nabla^{C^{\bullet}(W^{u}_{Z_{2}}/W^{u}_{Y}, F)}\big)$) be the flat vector bundles on $S$ of the absolute (resp. relative) Thom-Smale co-chain complex associated to $h|_{M_{1}}$ (resp.  $h|_{M_{2}}$ ).
We associate three combinatorial torsion forms to the above complexes of flat vector bundles:
$T_{f}(A^{C^{\bullet}(W^{u}_{Z}, F)}, h^{C^{\bullet}(W^{u}_{Z}, F)}),$ $T_{f}(A^{C^{\bullet}(W^{u}_{Z_{1}}, F)}, h^{C^{\bullet}(W^{u}_{Z_{1}}, F)}),$ $
 T_{f}(A^{C^{\bullet}(W^{u}_{Z_{2}}/W^{u}_{Y}, F)}, h^{C^{\bullet}(W^{u}_{Z_{2}}/W^{u}_{Y}, F)}).$ These three complexes of flat vector bundles form a double complex
 of flat vector bundles (see (\ref{e.373})).

This section is organized as follows. In Section \ref{ss2.1}, we explain the related concepts of Morse theory when there exists a fiberwise
Morse function.
In Section \ref{ss2.2}, We assign each spectral sequence $\big(E_{k}, d_{k}\big)$ of the double complex
mentioned above a combinatorial torsion form $T_{f}\big(A^{E_{k}}_{k}, h^{E_{k}}\big)$.
Then by a result of Goette we prove an equation of these combinatorial torsion forms.
This equation implies a ``gluing" relation among the three combinatorial torsion forms mentioned in the preceding paragraph. In Section \ref{ss2.4},
we introduce the double fibration and establish the double formula of analytic torsion form.
In Section \ref{ss2.5}, we establish the double formula of combinatorial torsion form.

\subsection{Thom-Smale complex of a fiberwise gradient vector field}\label{ss2.1}
Recall that $\pi:M\rightarrow S$ is a fibration divided into two fibrations $M_{1}$
and $M_{2}$ by a hypersurface $X$ (see Figure \ref{fig.5}). The reader can refer to \cite[$\S$5.1]{BGo} for the contents of this subsection.\\

Let $h:M\rightarrow \mathbb{R}$ be a smooth function on $M$. We \textbf{assume} that
\begin{align}\begin{aligned}\label{e.634}
h \text{ is a Morse function along every fiber } Z.
\end{aligned}\end{align}
When $S$ is a point, such Morse function always exists (cf.  \cite[Lemma 1.5]{BruMa12}). But in the family case, the existence of such a function $h$
on $M$ verifying (\ref{e.634}) implies some topological obstructions of the fibration $\pi: M\rightarrow S$.

Let $\bf{B}$ be the set of the critical points of $h$ along the fibers. For $x\in \bf{B}$, let $\text{ind}(x)$ denote the index of $h$ at
$x$, i.e., the number of negative eigenvalues of the quadratic form $\big((d^{Z})^{2}h\big)(x)$ on $T_{x}Z$. Let ${\bf{B}}^{i}$ be the set of critical points of $h$
of index $i$ along the fibers $Z$.
Then $\bf{B}$, ${\bf{B}}^{i}$ are finite covers of $S$. We denote by $B$, $B^{i}$ the corresponding fibers.\\

We \textbf{assume} that for any $x\in {\bf{B}}_{\partial}:={\bf{B}}\cap X$
 the restriction of the quadratic form $(d^{Z})^{2}h$ to the normal direction of $X$ in $M$ verifies
\begin{align}\begin{aligned}\label{e.636}
\big((d^{Z})^{2}h\big)(x)|_{\bf{n}}>0.
\end{aligned}\end{align}

We also \textbf{assume} that $h$ is an even function in $x_{m}-$direction on $U_{\varepsilon}$, i.e.,
\begin{align}\begin{aligned}\label{e.650}
h(x',x_{m})=h(x',-x_{m})\quad \text{for} \quad (x',x_{m})\in X_{ [-\varepsilon,\varepsilon]}.
\end{aligned}\end{align}
Let $\nabla h\subset TZ$ be the gradient vector field of $h$ along the fiber
$Z$ with respect to certain vertical Riemannian metric, such that $(\nabla h)|_{X}\subset TY$. Note that there exists such a Morse function verifying (\ref{e.634}), (\ref{e.636}) and (\ref{e.650}), when $S$ is a point (cf. \cite[Lemma 1.5]{BruMa12}).

Consider the differential equation  $\frac{dx}{dt}=\mathscr{Y}(x):=-(\nabla h)(x) $ along the fibers $Z$.
This equation defines a group of diffeomorphisms $(\varphi_{t})_{t\in \mathbb{R}}$ of $Z$.
 If $x\in B$, put
\begin{align}\begin{aligned}\label{e.638}
&W_{Z}^{u}(x)=\{x'\in Z,\, \lim_{t\rightarrow -\infty}\varphi_{t}(x')=x\},\\
&W_{Z}^{s}(x)=\{x'\in Z,\, \lim_{t\rightarrow +\infty}\varphi_{t}(x')=x\}.
\end{aligned}\end{align}
The cells $W_{Z}^{u}(x)$ ($W_{Z}^{s}(x)$) are called the unstable (stable) cells along the fiber $Z$. They are embedded submanifolds of $Z$, and moreover,
\begin{align}\begin{aligned}\label{e.639}
W_{Z}^{u}(x)\simeq\mathbb{R}^{\text{ind}(x)},\quad W_{Z}^{s}(x)\simeq\mathbb{R}^{m-\text{ind}(x)}.
\end{aligned}\end{align}

As in \cite[$\S$5.5]{BGo}, we \textbf{assume} that $\mathscr{Y}$ verifies the Smale transversality conditions (cf. \cite{Smale}) along every fiber, i.e., for $x,x'\in B,\, x\neq x'$
\begin{align}\begin{aligned}\label{e.635}
W_{Z}^{u}(x) \text{ and }W_{Z}^{s}(x') \text{ intersect transversally.}
\end{aligned}\end{align}
If $x\in B$, set $T_{x}Z^{u}=T_{x}W^{u}(x)$ and $T_{x}Z^{s}=T_{x}W^{s}(x)$,
then we have
\begin{align}\begin{aligned}\label{e.641}
T_{x}Z=T_{x}Z^{u}\oplus T_{x}Z^{s}.
\end{aligned}\end{align}
Hence  $TZ^{u}$, $TZ^{s}$ are vector bundles on $\bf{B}$ with fiber, respectively, $T_{x}Z^{u}$, $T_{x}Z^{s}$ at $x\in \bf{B}$. Moreover, we have
\begin{align}\begin{aligned}\label{e.642}
TZ|_{\bf{B}}=TZ^{u}|_{\bf{B}}\oplus TZ^{s}|_{\bf{B}}.
\end{aligned}\end{align}

If $x\in B$, let $o^{u}_{x}$, $o^{s}_{x}$ be the orientation lines of $T_{x}Z^{u}$, $T_{x}Z^{s}$. Then $o^{u}_{x},\, o^{s}_{x}$ are $\mathbb{Z}_{2}-$lines.
By (\ref{e.639}), (\ref{e.641}), we can identify $o^{u}_{x},\, o^{s}_{x}$ with the orientation lines of $W^{u}_{Z}(x)$ and $W^{s}_{Z}(x)$.
We use $o^{u}$, $o^{s}$ to denote the orientation line bundles of $TZ^{u}$, $TZ^{s}$ over $\bf{B}$, whose fibers at $x\in \bf{B}$
are $o^{u}_{x}$, $o^{s}_{x}$ respectively.\\

Let $(F^{*},\nabla^{F^{*}})$ be the dual flat vector bundle of $(F,\nabla^{F})$. Set
 \begin{align}\begin{aligned}\label{6.47}
C_{\bullet}(W^{u}_{Z}, F^{*})=\bigoplus_{x\in B}F^{*}_{x}\otimes o^{u}_{x}\quad \text{and} \quad C_{i}(W^{u}_{Z}, F^{*})=\bigoplus_{x\in B^{i}}F^{*}_{x}\otimes o^{u}_{x},
\end{aligned}\end{align}
then $C_{\bullet}(W^{u}_{Z}, F^{*})$ is a $\mathbb{Z}-$graded flat vector bundle over $S$, with a
flat connection $\nabla^{C_{\bullet}(W^{u}_{Z}, F^{*})}$ induced by $\nabla^{F^{*}}$.

By the transversality conditions (\ref{e.635}), for $x,x'\in B$, if $\text{ind}(x')=\text{ind}(x)-1$, then $W^{u}_{Z}(x)\cap W^{s}_{Z}(x')$ consists of a finite
set $\Gamma(x,x')$ of integral curves $\gamma$ of the vector field $\mathscr{Y}|_{Z}$, with $\gamma_{-\infty}=x$, $\gamma_{+\infty}=x'$, along which $W^{u}_{Z}(x)$
and $W^{s}_{Z}(x')$ intersect transversally.

Let $x,x'\in B$, such that $\text{ind}(x')=\text{ind}(x)-1$. The orientation bundle of the orthogonal bundle $T^{\perp}W^{s}_{Z}(x')$ to $TW^{s}_{Z}(x')$
 in
$TZ|_{W^{s}_{Z}(x')}$ is canonically isomorphic to $o^{u}(x')$. Let $T'W^{s}_{Z}(x)$ be the orthogonal bundle to $\mathscr{Y}$ in $TW^{u}_{Z}(x)$. Its
orientation bundle $o(T'W^{s}_{Z}(x))$ is canonically isomorphic to $o^{u}_{x}$, so that $s\in o(T'W^{s}_{Z}(x))$ corresponds to $\mathscr{Y}\widehat{\otimes} s\in o^{u}_{x}$.
Since $T^{\perp}W^{s}_{Z}(x')$ and $T'W^{s}_{Z}(x)$ have the same orientation bundle, to $\gamma\in \Gamma(x,x')$, we can associate $n_{\gamma}(x,x')\in o^{u}_{x}\otimes
o^{u}_{x'}$.

By (\ref{e.639}), on $W^{u}_{Z}(x)$ the flat vector bundle $F^{*}$ can be canonically trivialized by parallel transport with respect to $\nabla^{F^{*}}$.
In particular, if $x,x'\in B$ are critical points such
that $\text{ind}(x')=\text{ind}(x)-1$ and if $\gamma\in \Gamma(x,x'),\, e^{*}\in F^{*}_{x}$,
let $\tau_{\gamma}(e^{*})\in F^{*}_{x'}$ be the parallel
transport of $e^{*}\in F^{*}_{x}$ along $\gamma$ by $\nabla^{F^{*}}$.

If $x\in B$, $e^{*}\in F^{*}_{x}$, $s\in o^{u}_{x}$, then the differential $\partial$
 is defined by
\begin{align}\begin{aligned}\label{6.51}
\partial(e^{*}\otimes s)=\sum_{\begin{array}{c}
                 x'\in B \\
                 \text{ind}(x')=\text{ind}(x)-1
                \end{array}
}\sum_{\gamma\in \Gamma(x, x')}\tau_{\gamma}(e^{*})\otimes n_{\gamma}(x, x')s.
\end{aligned}\end{align}
Then $\partial$ maps $C_{i}(W^{u}_{Z}, F^{*})$ into $C_{i-1}(W^{u}_{Z}, F^{*})$.
By a result of Thom \cite{Thom} and Smale \cite{Smale}, we have $\partial^{2}=0$ (cf. \cite[Thm. 5.1]{BGo}).
From the definitions of $\nabla^{C_{\bullet}(W^{u}_{Z}, F^{*})}$ and $\partial$, we have
\begin{align}\begin{aligned}\label{e.645}
[\nabla^{C_{\bullet}(W^{u}_{Z}, F^{*})},\partial]=0.
\end{aligned}\end{align}

 Let $(C^{\bullet}(W^{u}_{Z}, F),\nabla^{C^{\bullet}(W^{u}_{Z}, F)}, \widetilde{\partial})$ be the dual complex of $(C_{\bullet}(W^{u}_{Z}, F^{*}),\nabla^{C_{\bullet}(W^{u}_{Z}, F^{*})}, \partial)$, then by (\ref{6.47}) we have
\begin{align}\begin{aligned}\label{6.8}
C^{\bullet}(W^{u}_{Z}, F)=\bigoplus_{x\in B}F_{x}\otimes o^{u}_{x}\quad \text{and} \quad C^{i}(W^{u}_{Z}, F)=\bigoplus_{x\in B^{i}}F_{x}\otimes o^{u}_{x}.
\end{aligned}\end{align}
Then $(C^{\bullet}(W^{u}_{Z},F),\nabla^{C^{\bullet}(W^{u}_{Z}, F)},\widetilde{\partial})$ is the Thom-Smale complex of flat vector bundles attached to $\mathscr{Y}|_{Z}$ along the fiber $Z$.
By duality, we get
$$\widetilde{\partial}^{2}=0,\quad \quad[\nabla^{C^{\bullet}(W^{u}_{Z}, F)},\widetilde{\partial}]=0,$$
hence we see that
$
(C^{\bullet}(W^{u}_{Z}, F),\nabla^{C^{\bullet}(W^{u}_{Z}, F)},\widetilde{\partial})
$
defines a complex of flat vector bundles over $S$ (see (\ref{5.45}), (\ref{5.47})). In particular, $A'=\widetilde{\partial}+\nabla^{C^{\bullet}(W^{u}_{Z}, F)}$ is a flat superconnection of
total degree 1 on $C^{\bullet}(W^{u}_{Z}, F)$. Let $h^{C^{\bullet}(W^{u}_{Z}, F)}$ be the Hermitian metric
on $C^{\bullet}(W^{u}_{Z}, F)$ induced by $h^{F}$.
Then we define the combinatorial torsion form $T_{f}(A^{C^{\bullet}(W^{u}_{Z}, F)}, h^{C^{\bullet}(W^{u}_{Z}, F)})$ over $S$
by Definition \ref{d.9}.

By our assumptions (\ref{e.634}), (\ref{e.636}), we have that $((d^{Z})^{2}h(x))|_{\bf{n}}>0$ for $x\in {\bf{B}}_{\partial}$ and $\mathscr{Y}|_{X}\in TY$,
hence $\mathscr{Y}|_{X}$ verifies the Smale transversality conditions (\ref{e.635}). Set
 \begin{align}\begin{aligned}\label{6.48}
C_{\bullet}(W^{u}_{Y}, F^{*})=\bigoplus_{y\in B_{\partial}}F^{*}_{y}\otimes o^{u}_{y}\quad \text{and} \quad C_{i}(W_{Y}^{u}, F^{*})=\bigoplus_{y\in B_{\partial}^{i}}F^{*}_{y}\otimes o^{u}_{y},
\end{aligned}\end{align}
where $B_{\partial}$ is the fiber of ${\bf{B}}_{\partial}$. Then $(C_{\bullet}(W^{u}_{Y}, F^{*}), \nabla^{C_{\bullet}(W^{u}_{Y}, F^{*})},\partial)$ is a sub-complex of flat vector bundles (\ref{6.47}).

\subsection{Torsion forms associated to the spectral sequences of double complex}\label{ss2.2}
\begin{defn}\label{d.19}
Let $c^{p, q}, 0\leq p \leq 2, 0\leq q \leq m$, be vector spaces of finite dimension. Let
\begin{align}\begin{aligned}\label{e.369}
\partial:c^{p, q}\rightarrow c^{p, q+1}, \quad v:c^{p, q}\rightarrow c^{p+1, q}
\end{aligned}\end{align}
be some linear maps such that
\begin{align}\begin{aligned}\label{e.368}
\partial ^{2}=v^{2}=0 \text{ and } \partial v+ v\partial =0.
\end{aligned}\end{align}
Then $(c^{p,q},\partial, v)$ forms a double complex of vector spaces. Let $\{C^{*}, D\}$ be its total complex defined by
\begin{align}\begin{aligned}\label{e.370}
 C^{n}=\bigoplus_{p+q=n}c^{p, q}, \quad D=\partial + v,
\end{aligned}\end{align}
whose the cohomology groups are denoted by $H^{*}(C^{*})$.
\end{defn}

 Let $\{'E_{r}, 'd_{r}\}$ be the spectral sequence (cf. \cite[$\S$3.5]{Gri}) associated to the double complex $c^{p, q}$
 with the filtration on the total complex defined by
 \begin{align}\begin{aligned}\label{e.371}
 'F^{p}C^{n}=\bigoplus_{\mbox{\tiny %
$ \begin{array}{c}
 p'+q=n \\
 p'\geq p
 \end{array}
$ }}c^{p', q},
\end{aligned}\end{align}
and let $\{''E_{r}, ''d_{r}\}$\index{$\{''E_{r}, ''d_{r}\}$} be the spectral sequence associated to the double complex $c^{p, q}$ with the filtration
 \begin{align}\begin{aligned}\label{e.372}
 ''F^{q}C^{n}=\bigoplus_{\mbox{\tiny %
$
 \begin{array}{c}
 p+q''=n \\
  q''\geq q
 \end{array}$ }}c^{p, q''}\, .
\end{aligned}\end{align}
The filtration $F^{p}C^{*}$ on $C^{*}$ also induces a filtration $F^{p}H^{*}(C^{*})$ on the cohomology $H^{*}(C^{*})$ (cf. \cite[$\S$3.5]{Gri}). The
associated graded cohomology is
 \begin{align}\begin{aligned}\label{e.820}
 {\rm Gr}H^{*}(C^{*})=\bigoplus_{p,q}{\rm Gr}^{p}H^{q}(C^{*}),\quad \text{with}\quad\text{Gr}^{p}H^{q}(C^{*}):=\frac{F^{p}H^{q}(C^{*})}{F^{p+1}H^{q}(C^{*})}.
\end{aligned}\end{align}
 The spectral sequence $E_{k}$ converges to $E_{\infty}=E_{k_{0}}$ with
 \begin{align}\begin{aligned}\label{e.789}
 E^{p,q}_{\infty}=\text{Gr}^{p}H^{p+q}(C^{*}).
\end{aligned}\end{align}

In our situation there exists a natural double complex of flat vector bundles, that is

 \begin{align}\begin{aligned}\label{e.373}
\xymatrix{
& && &\\
0\ar[r]&C^{p}(W^{u}_{Z_{2}}/W^{u}_{Y}, F)\ar[r]^{\widetilde{v}^{0, p}}\ar[u]^{\widetilde{\partial}^{0, p}}
&C^{p}(W^{u}_{Z}, F) \ar[r]^{\widetilde{v}^{1, p}}\ar[u]^{\widetilde{\partial}^{1, p}}
&C^{p}(W^{u}_{Z_{1}}, F)\ar[r]\ar[u]^{\widetilde{\partial}^{2, p}}
&0\, .}
\end{aligned}\end{align}
Since every row of the double complex (\ref{e.373}) is exact, we have
\begin{align}\begin{aligned}\label{e.374}
H^{*}(C^{*})=0.
\end{aligned}\end{align}
Hence by (\ref{e.789}) we find
 \begin{align}\begin{aligned}\label{e.377}
 {'E}_{\infty}={{'E}}_{3}=0.\end{aligned}\end{align}

For the filtration (\ref{e.371}), the first term ${'E}_{0}^{p, q}$ of the spectral sequence is
\begin{align}\begin{aligned}\label{e.375}
{'E}_{0}^{p, q}=\frac{{'F}^{p}C^{p+q}}{{'F}^{p+1}C^{p+q}}=c^{p, q},
\end{aligned}\end{align}
whose differential ${'d}_{0}$ induced by $D$ is ${'d}_{0}=\widetilde{\partial}$.  For $0\leq q\leq m$, the next term ${'E}_{1}^{p, q}$ with ${'d}_{1}=\widetilde{v}$ is given by
\begin{align}\begin{aligned}\label{e.376}
{'E}_{1}^{0, q}=H^{q}(Z_{2}, Y, F), \quad
{'E}_{1}^{1, q}=H^{q}(Z, F), \quad
{'E}_{1}^{2, q}=H^{q}(Z_{1}, F),
\end{aligned}\end{align}
which form the following complex
\begin{align}\begin{aligned}\label{e.378}
({'E}^{*, q}_{1}, {{'d}_{1}}):\quad 0 \rightarrow {{'E}_{1}^{0, q}} \overset{{'d}_{1}}\rightarrow {{'E}_{1}^{1, q}} \overset{{'d}_{1}}\rightarrow {'E}_{1}^{2, q}\rightarrow 0.\end{aligned}\end{align}
The last terms are
\begin{align}\begin{aligned}\label{e.379}
{'E}_{2}^{0, q}=\Ker({{'d}}_{1})\cap H^{q}(Z_{2}, Y, F), \quad {'E}_{2}^{1, q}={{'E}}_{3}^{1, q}, \quad{'E}_{2}^{2, q}=H^{q}(Z_{1}, F)/\Im({{'d}_{1}}),
\end{aligned}\end{align}
which form the complexe
\begin{align}\begin{aligned}\label{e.380}
0\rightarrow {{'E}}_{2}^{0, q}\overset{{'d}_{2}} \longrightarrow {{'E}} _{2}^{2, q-1}\rightarrow 0,
\end{aligned}\end{align}
 where ${'d}_{2}$ is the coboundary map. From (\ref{e.377}), we know that the sequence (\ref{e.380}) is exact and
 \begin{align}\begin{aligned}\label{e.405}
 {'E}_{2}^{1, q}={{'E}}_{3}^{1, q}=0.
\end{aligned}\end{align}

For the filtration (\ref{e.372}), the first term of $\{'{'E}_{r}, {''d}_{r}\}$ is given by
 \begin{align}\begin{aligned}\label{e.382}
({''E}_{0}, {''d}_{0}):\quad 0 \rightarrow C^{p}(W^{u}_{Z_{2}}/W^{u}_{Y}, F)\overset{\widetilde{v}^{0, p}}\rightarrow
C^{p}(W^{u}_{Z}, F) \overset{\widetilde{v}^{1, p}}\rightarrow
C^{p}(W^{u}_{Z_{1}}, F)
\rightarrow 0, \end{aligned}\end{align}
 which is exact and split, hence we have
 \begin{align}\begin{aligned}\label{e.385}
 {''E}_{\infty}={''E}_{1}=0 .\end{aligned}\end{align}

We use $(\mathscr{H},\nabla^{\mathscr{H}})$ to denote the long exact sequence of flat vector bundles associated to (\ref{e.373}) with the $L^{2}-$metric $h_{L^{2}}^{\mathscr{H}}$ induced by Hodge-De Rham theory (see Section \ref{ss1.4}), that is
\begin{align}\begin{aligned}\label{5.15}
(\mathscr{H},\nabla^{\mathscr{H}},\delta):\cdots \longrightarrow H^{i}(Z, F)\overset{\delta}\longrightarrow H^{i}(Z_{1}, F) \overset{\delta}\longrightarrow H^{i+1}(Z_{2}, Y, F) \overset{\delta}\longrightarrow\cdots\,.
\end{aligned}\end{align}
Then we can associate a torsion form $T_{f}(A^{\mathscr{H}}, h_{L^{2}}^{\mathscr{H}})$ to $(\mathscr{H},\nabla^{\mathscr{H}},\delta)$ as in Definition \ref{d.9}.

\begin{lemma}\label{l.37}The following identity holds in $Q^{S}/Q^{S, 0}$
 \begin{align}\begin{aligned}\label{5.21}
T_{f}(A^{\mathscr{H}}, h_{L^{2}}^{\mathscr{H}})=T_{f}\left(A^{{'E}_{1}}_{1}, h^{{'E}_{1}}_{L^{2}}\right)+T_{f}\left(A^{{'E}_{2}}_{2}, h^{{'E}_{2}}_{L^{2}}\right).
\end{aligned}\end{align}
\end{lemma}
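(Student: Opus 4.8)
The plan is to realise the long exact sequence $(\mathscr{H},\delta)$ as an iterated splice of finite exact sequences of flat vector bundles built out of the two non-trivial pages ${'E}_1$, ${'E}_2$ of the spectral sequence, and then to evaluate $T_f(A^{\mathscr{H}},h^{\mathscr{H}}_{L^2})$ by repeated use of the splicing formula of Lemma \ref{l.36}, the change-of-metric formula of Theorem \ref{t.16}, and the elementary computation of Lemma \ref{l.49}. Conceptually this is the multiplicativity of Bismut--Lott torsion forms along the pages of a spectral sequence, specialised to the present three-column double complex.

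First I would record the structural input. For each $q$ the row ${'E}_1^{\bullet,q}$ is the three-term flat complex $0\to H^q(Z_2,Y,F)\to H^q(Z,F)\to H^q(Z_1,F)\to 0$ with differential ${'d}_1=\widetilde{v}$; by (\ref{e.405}) its cohomology is ${'E}_2^{0,q}$ in degree $0$, vanishes in degree $1$, and is ${'E}_2^{2,q}$ in degree $2$, while by (\ref{e.377}) the differential ${'d}_2\colon {'E}_2^{0,q}\to {'E}_2^{2,q-1}$ is an isomorphism. Hence for each $q$ the five-term sequence
\begin{align*}
P_q\colon\quad 0\to {'E}_2^{0,q}\overset{\iota}{\hookrightarrow} H^q(Z_2,Y,F)\overset{\widetilde{v}}{\longrightarrow} H^q(Z,F)\overset{\widetilde{v}}{\longrightarrow} H^q(Z_1,F)\overset{\pi}{\twoheadrightarrow} {'E}_2^{2,q}\to 0
\end{align*}
is exact, and composing its last arrow $\pi$ with the isomorphism $({'d}_2)^{-1}\colon {'E}_2^{2,q}\to {'E}_2^{0,q+1}$ and the inclusion ${'E}_2^{0,q+1}\hookrightarrow H^{q+1}(Z_2,Y,F)$ produces, by the construction of the connecting homomorphism of the spectral sequence, precisely the differential $\delta$. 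In other words $\mathscr{H}$ is the iterated splice, over $q=0,\dots,m$, of the sequences $P_q$ glued through the two-term complexes $0\to {'E}_2^{2,q}\overset{({'d}_2)^{-1}}{\longrightarrow} {'E}_2^{0,q+1}\to 0$, with all bundles carrying the $L^2$-Hodge metrics induced from $\mathscr{H}$ and with ${'E}_2^{0,0}={'E}_2^{2,m}=0$ at the two ends.

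Iterating Lemma \ref{l.36} along this splice writes $T_f(A^{\mathscr{H}},h^{\mathscr{H}}_{L^2})$, in $Q^{S}/Q^{S,0}$, as a signed sum over $q$ of the torsion forms $T_f(P_q)$ and of the torsion forms of the two-term complexes above; the signs are determined by the positions at which the splices occur, and the $\mathbb{Z}$-grading imposed on $\mathscr{H}$ (namely $3p$, $3p+1$, $3p+2$ on the three summands) was chosen precisely so that these positions reduce modulo $2$ to the total degree $p+q$ of the spectral-sequence page, making the signs reassemble correctly. By Lemma \ref{l.49} each two-term torsion form equals $-\log|\det({'d}_2)^{-1}|=\log|\det {'d}_2|$, so these terms sum --- up to the sign bookkeeping --- to $T_f(A^{{'E}_2}_2,h^{{'E}_2}_{L^2})$, which is by definition the alternating sum of the torsion forms of the ${'d}_2$-complexes ${'E}_2^{0,q}\to {'E}_2^{2,q-1}$. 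On the other hand each $P_q$ is the exact five-term sequence obtained by adjoining to the row $({'E}_1^{\bullet,q},{'d}_1)$ its own cohomology bundles with their Hodge metrics, so $T_f(P_q)$ coincides, up to a sign and in $Q^{S}/Q^{S,0}$, with the torsion form of $({'E}_1^{\bullet,q},{'d}_1)$ itself --- the standard comparison between the torsion of a complex and that of the acyclic complex assembled from it and its cohomology, which can be read off from \cite[Thm.~A1.4]{BL} as in the proof of Lemma \ref{l.36}. Summing these with the correct signs produces $T_f(A^{{'E}_1}_1,h^{{'E}_1}_{L^2})$, and adding the two groups of terms yields (\ref{5.21}).

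The main obstacle is the twofold bookkeeping. On the metric side one must check that at each splicing step the bundle glued in (a kernel, an image, or a cokernel) inherits exactly the $L^2$-Hodge metric, so that no secondary $\widetilde{f}$-classes are created by Theorem \ref{t.16} and (\ref{5.21}) holds on the nose in $Q^{S}/Q^{S,0}$; since Hodge theory for ${'E}_2$ is nested inside that for ${'E}_1$ and the latter inside that for $\mathscr{H}$, this compatibility should hold, but it must be spelled out carefully. On the combinatorial side one must check that the signs produced by iterating Lemma \ref{l.36} --- which depend on the lengths and grading shifts of the spliced pieces --- collapse to the clean right-hand side of (\ref{5.21}); here the specific grading conventions on $\mathscr{H}$, ${'E}_1$ and ${'E}_2$ do the work. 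As a sanity check, applying $d$ to both sides of (\ref{5.21}) and using Theorem \ref{t.21} together with ${'E}_3=0$ reduces everything to the identity $f(\nabla^{\mathscr{H}})=f(\nabla^{{'E}_1})$, which holds because $\mathscr{H}$ and the ${'E}_1$-page have the same underlying flat bundles with gradings agreeing modulo $2$.
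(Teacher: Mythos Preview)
Your proposal is correct and follows essentially the same route as the paper. The paper names your five-term sequences $A_q$ (your $P_q$), your two-term $({'d}_2)^{-1}$-complexes $B_q^{-1}$, and your splices $C_q:=A_q\circ B_q^{-1}$; it then writes $\mathscr{H}=C_m\circ\cdots\circ C_1\circ A_0$ and applies Lemma~\ref{l.36} repeatedly, together with the functoriality under direct sums (to decompose $T_f({'E}_1)$ and $T_f({'E}_2)$ row by row), exactly as you outline. The two points you flag as obstacles---that the Hodge metrics on ${'E}_2^{0,q}$ and ${'E}_2^{2,q}$ agree with those inherited as sub- and quotient bundles so that no $\widetilde f$-terms appear, and that the signs reassemble correctly---are handled in the paper tersely (the former is implicit, the latter by the explicit bookkeeping in (\ref{e.791})--(\ref{e.794})); your plan would benefit from writing them out, but the strategy is the right one.
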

\begin{proof} By (\ref{e.376})---(\ref{e.382}), we have the following exact sequences:
 \begin{align}\begin{aligned}\label{e.790}
  &A_{0}: \quad 0\rightarrow H^{0}(Z_{2}, Y, F)
 \overset{{'d}_{1}}\rightarrow
H^{0}(Z, F)\overset{{'d}_{1}}\rightarrow H^{0}(Z_{1}, F)\rightarrow H^{0}(Z_{1}, F)/\Im({{'d}_{1}})\rightarrow 0,\\
 &A_{q\geq 1}: \quad 0\rightarrow \Ker({{'d}}_{1})\cap H^{q}(Z_{2}, Y, F)\rightarrow H^{q}(Z_{2}, Y, F)\\
& \qquad \qquad \qquad \qquad\overset{{'d}_{1}}\rightarrow
H^{q}(Z, F)\overset{{'d}_{1}}\rightarrow H^{q}(Z_{1}, F)\rightarrow H^{q}(Z_{1}, F)/\Im({{'d}_{1}})\rightarrow 0,\\
&B_{q\geq 1}:\quad 0\rightarrow \Ker({{'d}}_{1})\cap H^{q}(Z_{2}, Y, F)\overset{{'d}_{2}}\rightarrow H^{q-1}(Z_{1}, F)/\Im({{'d}_{1}})\rightarrow 0.
\end{aligned}\end{align}
Their torsion forms will be denoted by $T_{f}(A_{q})$ and $T_{f}(B_{q})$ for short with respect to $L_{2}-$metrics. For $1\leq q\leq m$, let
\begin{align}\begin{aligned}\label{e.793}
&B_{q}^{-1}:\quad 0\rightarrow H^{q-1}(Z_{1}, F)/\Im({{'d}_{1}})\overset{{'d}^{-1}_{2}}\rightarrow \Ker({{'d}}_{1})\cap H^{q}(Z_{2}, Y, F) \rightarrow 0,\\
&C_{q}:=A_{q}\circ B_{q}^{-1} :\quad 0\rightarrow H^{q-1}(Z_{1}, F)/\Im({{'d}_{1}}) \rightarrow H^{q}(Z_{2}, Y, F)\\
& \qquad \qquad \qquad\overset{{'d}_{1}}\rightarrow
H^{q}(Z, F)\overset{{'d}_{1}}\rightarrow H^{q}(Z_{1}, F)\rightarrow H^{q}(Z_{1}, F)/\Im({{'d}_{1}})\rightarrow 0,
 \end{aligned}\end{align}
 then by Lemma \ref{l.36}, we get in $Q^{S}/Q^{S, 0}$
 \begin{align}\begin{aligned}\label{e.792}
 T_{f}(B_{q})=-T_{f}(B^{-1}_{q})\quad \text{and} \quad T_{f}(C_{q})=T_{f}(A_{q})+T_{f}(B^{-1}_{q}).
  \end{aligned}\end{align}
 By functoriality of torsion form (cf. \cite[Lemma 3.1 (d)]{Ma02}), Lemma \ref{l.36}, (\ref{e.378}), (\ref{e.380}) and (\ref{e.792}), we have in $Q^{S}/Q^{S, 0}$
 \begin{align}\begin{aligned}\label{e.791}
 &T_{f}\left(A^{{'E}_{1}}_{1}, h^{{'E}_{1}}_{L^{2}}\right)=\sum_{q=0}^{m}(-1)^{q}T_{f}({'E}^{*, q}_{1})=T_{f}(A_{0})+\sum_{q=1}^{m}(-1)^{q-1}T_{f}(A_{q}),\\
 &T_{f}\left(A^{{'E}_{2}}_{2},h^{{'E}_{2}}_{L^{2}}\right)=\sum_{q=1}^{m}(-1)^{q}T_{f}(B_{q})=\sum_{q=1}^{m}(-1)^{q-1}T_{f}(B^{-1}_{q}).
\end{aligned}\end{align}
Hence by (\ref{e.792}) and (\ref{e.791}) the right side of equation (\ref{5.21}) in $Q^{S}/Q^{S, 0}$ is equal to
\begin{align}\begin{aligned}\label{e.794}
T_{f}(A_{0})&+\sum_{q=1}^{m}(-1)^{q-1}T_{f}(A_{q})+\sum_{q=1}^{m}(-1)^{q-1}T_{f}(B^{-1}_{q})\\
&=T_{f}(A_{0})+\sum_{q=1}^{m}(-1)^{q-1}T_{f}(C_{q})=T_{f}(A^{\mathscr{H}}, h_{L^{2}}^{\mathscr{H}}),
\end{aligned}\end{align}
for the last equality we have used Lemma \ref{l.36} and the fact that $\mathscr{H}=C_{m}\circ C_{m-1}\circ\cdots\circ C_{1}\circ A_{0}$.
The proof of Lemma \ref{l.37} is completed.
\end{proof}

  Let $h^{E_{k}}$ be the metric on $E_{k}$ induced by the metric $h^{c^{p, q}}$ on $c^{p, q}$ through the finite dimensional Hodge theory. Recall that the filtration on $H^{q}(C^{*})$ is defined in (\ref{e.820}) and we have  $E^{p,q}_{\infty}=\text{Gr}^{p}H^{p+q}(C^{*})$ (see (\ref{e.789})).
We assume that $\widetilde{h}^{H(C^{*})}$ is a metric which is adapted to the induced filtration on $H^{*}(C^{*})$ (see \cite[Def. 7.35]{Goette03}).
Let $T_{f}\big(A^{E_{k}}_{k}, h^{E_{k}}\big)$ be the torsion form associated to the complexes of flat vector bundles $\{E_{k}, d_{k}\}$ by
Definition \ref{d.9}.
Let $\widetilde{f}\big(\nabla^{H(C^{*})}, \widetilde{h}^{H(C^{*})}, h^{H(C^{*})}\big)$ and $\widetilde{f}\big(\nabla^{E_{\infty}},
\widetilde{h}^{H(C^{*})}, h^{E_{\infty}}\big)$ be the torsion forms associated to filtered flat complex vector bundle defined in
\cite[Def. 3.1]{Ma02}.
Goette proved the following theorem \cite[Thm. 7.37]{Goette03}, which is a finite dimensional version of a result of Ma \cite[Thm. 0.1]{Ma02} in an infinite-dimensional setting.
\begin{thm}\label{t.15}
The following identity holds in $Q^{S}/Q^{S, 0}$
\begin{align}\begin{aligned}\label{e.381}
T_{f}\big(A^{C^{*}}, h^{C^{*}}\big)+\widetilde{f}&\big(\nabla^{H(C^{*})}, \widetilde{h}^{H(C^{*})}, h^{H(C^{*})}\big) \\
&=T_{f}\big(A^{E_{0}}_{0}, h^{E_{0}}\big)+\sum_{k=1}^{k_{0}}T_{f}\big(A^{E_{k}}_{k}, h^{E_{k}}\big)+\widetilde{f}\big(\nabla^{E_{\infty}},
\widetilde{h}^{H(C^{*})}, h^{E_{\infty}}\big),
\end{aligned}\end{align}
where $f(x)=xe^{x^{2}}$ and $k_{0}$ is chosen such that $E_{\infty}={E_{k_{0}+1}}$.
\end{thm}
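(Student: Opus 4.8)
The plan is to follow Ma's strategy from \cite{Ma02}, adapted to the finite-dimensional setting as in Goette \cite{Goette03}. The key observation is that passage from the total complex $(C^*, D)$ to the cohomology $H(C^*)$ can be factored through the successive pages $E_0, E_1, \dots, E_{k_0}, E_\infty$ of the spectral sequence, and at each stage one has a short exact (or, more precisely, appropriately filtered) structure to which the anomaly formulas of Section \ref{ss1.3} apply. Concretely, I would first note that the differential $D = \partial + v$ on $C^*$ is a flat superconnection of total degree $1$, and its associated graded with respect to the filtration $F^p C^*$ recovers $E_0$ with differential $d_0$; the metric $h^{C^*}$ induces $h^{E_0}$ on $E_0$. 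This furnishes the base case relating $T_f(A^{C^*}, h^{C^*})$ to $T_f(A^{E_0}_0, h^{E_0})$ plus a correction measured by $\widetilde f$ of the induced filtration metrics.

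The heart of the argument is an inductive step from $E_k$ to $E_{k+1}$. For each $k$, $(E_k, d_k)$ is a complex of flat vector bundles on $S$ with induced metric $h^{E_k}$, and its cohomology is $E_{k+1}$; by finite-dimensional Hodge theory this gives a metric $h^{E_{k+1}}$. Applying Theorem \ref{t.21} (the anomaly formula for $T_f$ of a complex) together with the deformation-of-metric formula \eqref{e.391} of Theorem \ref{t.16}, one obtains a relation of the form
\begin{align}\begin{aligned}\label{e.indstep}
T_f\big(A^{E_k}_k, h^{E_k}\big) + \widetilde f\big(\nabla^{E_{k+1}}, \text{(two metrics)}\big) = \text{(transgression terms)}
\end{aligned}\end{align}
in $Q^S/Q^{S,0}$. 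Summing these relations over $k = 0, 1, \dots, k_0$ telescopes the $\widetilde f$-corrections, since the metric on $E_{k+1}$ produced as cohomology of $(E_k, h^{E_k})$ feeds into the next step. The metrics at the two ends are $h^{C^*}$ (via the filtration, expressed through the adapted metric $\widetilde h^{H(C^*)}$) on one side and $h^{E_\infty}$ versus the Hodge metric $h^{H(C^*)}$ on the other; here one invokes the definition of the filtered torsion forms $\widetilde f(\nabla^{H(C^*)}, \widetilde h^{H(C^*)}, h^{H(C^*)})$ and $\widetilde f(\nabla^{E_\infty}, \widetilde h^{H(C^*)}, h^{E_\infty})$ from \cite[Def. 3.1]{Ma02}, together with the compatibility $E^{p,q}_\infty = \mathrm{Gr}^p H^{p+q}(C^*)$ from \eqref{e.789}. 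This is precisely where the adaptedness hypothesis on $\widetilde h^{H(C^*)}$ is used: it makes the boundary contributions match the prescribed filtered torsion terms rather than arbitrary anomaly corrections.

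The main obstacle is bookkeeping the metric corrections consistently across all pages so that the telescoping is exact in $Q^S/Q^{S,0}$, rather than merely up to exact forms at each stage --- one must check that the class $\widetilde f$ attached to the cohomology metric of $(E_k, h^{E_k})$ coincides with the class entering the $(k+1)$-st step, which requires the path-independence of $\widetilde f$ and the functoriality of $T_f$ under taking cohomology (the finite-dimensional analogues of \cite[Thm. 2.22, 2.24]{BL}). A secondary subtlety is the sign and degree conventions in the filtration $F^p$ and in the identification $E_\infty \cong \mathrm{Gr}\, H(C^*)$; these must be handled exactly as in \cite[$\S$3]{Ma02} so that the filtered torsion forms appear with the correct orientation. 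Since this theorem is quoted verbatim from \cite[Thm. 7.37]{Goette03}, in the paper itself it suffices to cite that reference; the sketch above indicates how the proof goes should one wish to reconstruct it, reducing everything to the anomaly and functoriality properties of $T_f$ recalled in Section \ref{ss1.3}.
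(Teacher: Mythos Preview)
Your proposal is correct in its conclusion: the paper does not supply a proof of Theorem~\ref{t.15} but simply quotes it from Goette \cite[Thm.~7.37]{Goette03}, noting that it is the finite-dimensional analogue of Ma \cite[Thm.~0.1]{Ma02} (with a holomorphic version in \cite[Thm.~1.2]{Ma00}). Your final paragraph already recognizes this, and the sketch you give of the underlying telescoping argument through the spectral sequence pages, controlled by the anomaly formulas of Theorems~\ref{t.21} and~\ref{t.16}, is an accurate outline of how the cited proof proceeds; so there is nothing to correct.
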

\begin{rem}The holomorphic version of Theorem \ref{t.15} has been established by Ma \cite[Thm. 1.2]{Ma00}. In our application of Theorem \ref{t.15} in this paper, we always have $H(C^{*})=0$, therefore by (\ref{e.789}) we have $E_{\infty}=0$.
Thus we can assume that there exists a metric adapted to the induced filtration on $H^{*}(C^{*})$.
\end{rem}

By applying Theorem \ref{t.15} for the filtration (\ref{e.371}), we get from (\ref{e.374}), (\ref{e.377}) that
\begin{align}\begin{aligned}\label{e.384}
T_{f}\big(A^{C^{*}}, h^{C^{*}}\big)=T_{f}\left(A^{{'E}_{0}}_{0}, h^{{'E}_{0}}\right)+T_{f}\left(A^{{'E}_{1}}_{1}, h^{{'E}_{1}}\right)
+T_{f}\left(A^{{'E}_{2}}_{2}, h^{{'E}_{2}}\right).
\end{aligned}\end{align}

Since the sequence (\ref{e.382}) is exact and split, by \cite[Thm. A1.1]{BL} we have
\begin{align}\begin{aligned}\label{e.386}
 T_{f}\big(A^{{''E}_{0}}_{0}, h^{{''E}_{0}}\big)=0.
\end{aligned}\end{align}
 By applying Theorem \ref{t.15} for the filtration (\ref{e.372}), we get from (\ref{e.385}) and (\ref{e.386}) that the torsion form of the total complex $\{C^{*}, D\}$ is vanished, i.e.,
\begin{align}\begin{aligned}\label{e.387}
T_{f}\left(A^{C^{*}}, h^{C^{*}}\right)=0.
\end{aligned}\end{align}

 Let $h^{E_{1}}_{L^{2}},\,h^{E_{2}}_{L^{2}}$ be the $L^{2}$-metric induced by
Hodge-de Rham theory (see Section \ref{ss1.4}). By Theorem \ref{t.16} and (\ref{e.377}), we have in $Q^{S}/Q^{S, 0}$
\begin{align}\begin{aligned}\label{5.9}
T_{f}\left(A^{{'E}_{1}}_{1}, h^{{'E}_{1}}\right)-&T_{f}\left(A^{{'E}_{1}}_{1}, h^{{'E}_{1}}_{L^{2}}\right)\\
&=\widetilde{f}\left(\nabla^{{'E}_{1}},
h^{{'E}_{1}}_{L^{2}}, h^{{'E}_{1}}\right)-\widetilde{f}\left(\nabla^{{'E}_{2}},
h^{{'E}_{2}}_{L^{2}}, h^{{'E}_{2}}\right),\end{aligned}\end{align}
and
\begin{align}\begin{aligned}\label{5.35}
T_{f}\left(A^{{'E}_{2}}_{2}, h^{{'E}_{2}}\right)-T_{f}\left(A^{{'E}_{2}}_{2}, h^{{'E}_{2}}_{L^{2}}\right)
=\widetilde{f}\left(\nabla^{{'E}_{2}},
h^{{'E}_{2}}_{L^{2}}, h^{{'E}_{2}}\right).\end{aligned}\end{align}
From (\ref{e.384}), (\ref{e.387}), (\ref{5.9}) and (\ref{5.35}), it follows
\begin{align}\begin{aligned}\label{e.413}
0=T_{f}\left(A^{{'E}_{0}}_{0}, h^{{'E}_{0}}\right)+&T_{f}\left(A^{{'E}_{1}}_{1}, h^{{'E}_{1}}_{L^{2}}\right)\\
&+T_{f}\left(A^{{'E}_{2}}_{2}, h^{{'E}_{2}}_{L^{2}}\right)
+\widetilde{f}\left(\nabla^{{'E}_{1}},
h^{{'E}_{1}}_{L^{2}}, h^{{'E}_{1}}\right).\end{aligned}\end{align}

\subsection{Double formula for the analytic torsion form}\label{ss2.4}
In Sections \ref{ss2.4} and \ref{ss2.5}, we adopt the notations of Section \ref{ss1.1}. Recall that the double fibration $(\overline{M},\phi)$ is defined in Definition \ref{d.39}.
Then $(\overline{M},\overline{F})$ admits a $\mathbb{Z}_{2}$-action where $\mathbb{Z}_{2}:=\{\Id, \phi\}$.

 For $g\in \mathbb{Z}_{2}$, set
\begin{align}\begin{aligned}\label{e.647}
\chi_{g}(\overline{Z})=\sum_{p=0}^{m}(-1)^{p}\tr^{H^{p}(\overline{Z},\mathbb{R})}[g],\quad \chi_{g}'(\overline{Z}, \overline{F})=\sum_{p=0}^{m}(-1)^{p}p\tr^{H^{p}(\overline{Z},\overline{F})}[g].
\end{aligned}\end{align}
The $\mathbb{Z}_{2}-$equivariant analytic torsion form was defined by Bismut-Goette in \cite[Def. 3.31]{BGo}.
\begin{defn}\label{d.20} For $g\in \mathbb{Z}_{2}$, we define
\begin{align}\begin{aligned}\label{6.1}
\mathscr{T}_{g}(T^{H}\overline{M}, &g^{T\overline{Z}}, h^{\overline{F}}):=-\int_{0}^{+\infty}\left[f_{g}^{\wedge}(\overline{C}'_{t}, h^{\Omega^{\bullet}(\overline{Z},F|_{\overline{Z}})})-\frac{\chi_{g}'(\overline{Z}, \overline{F})}{2}f'(0)\right.\\
&\left.-\left(\frac{1}{4}m\rk(\overline{F})\chi_{g}(\overline{Z})-\frac{\chi_{g}'(\overline{Z}, \overline{F})}{2}\right)f'(\frac{i\sqrt{t}}{2})\right]\frac{dt}{t}.
\end{aligned}\end{align}\index{$\mathscr{T}_{g}(T^{H}\overline{M}, g^{T\overline{Z}}, h^{\overline{F}})$}
\end{defn}

Analogous to \cite[Prop. 2.1]{BruMa12}, we have the double formula for analytic torsion form.
\begin{prop}\label{p6.1}{\rm (Double formula for the analytic torsion form)} For $g\in \mathbb{Z}_{2}$, we have
\begin{align}\begin{aligned}\label{6.3}
\mathscr{T}_{g}(T^{H}\overline{M}, g^{T\overline{Z}}, h^{\overline{F}})=\mathscr{T}_{{\rm abs}}(T^{H}M, g^{TZ}, h^{F})+\chi(g)\mathscr{T}_{{\rm rel}}(T^{H}M, g^{TZ}, h^{F}),
\end{aligned}\end{align}
where $\chi$ is the nontrivial character of $\mathbb{Z}_{2}$.
\end{prop}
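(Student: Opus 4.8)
The plan is to establish the double formula $\mathscr{T}_{g}(T^{H}\overline{M}, g^{T\overline{Z}}, h^{\overline{F}})=\mathscr{T}_{{\rm abs}}(T^{H}M, g^{TZ}, h^{F})+\chi(g)\mathscr{T}_{{\rm rel}}(T^{H}M, g^{TZ}, h^{F})$ by comparing the integrands in the defining formulas \eqref{6.1} and \eqref{e.43} term by term, using the double formula of heat kernels (Lemma \ref{l1.1}) and of $f'(D_t)$ (Lemma \ref{l.46}) to reduce the equivariant trace on $\overline{Z}$ to a sum of the absolute and relative traces on $Z$. The key observation is that for $g=\Id$, $\chi(g)=1$ and the equivariant quantities are just the ordinary ones on $\overline{Z}$, while for $g=\phi$, $\chi(g)=-1$, and the $\phi$-twisted trace picks out the off-diagonal contribution $\phi^{*}_{\phi^{-1}(x)}f'(\overline{D}_{t})(\phi^{-1}(x),x)$.

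\textbf{Step 1: Decompose the equivariant heat-kernel supertrace.} For $g\in\mathbb{Z}_2$, by definition
$$
f^{\wedge}_{g}(\overline{C}'_{t},h^{\Omega^{\bullet}(\overline{Z},F|_{\overline{Z}})})=\varphi\tr_{s}\Big[g\,\frac{N}{2}f'(\overline{D}_{t})\Big]=\varphi\int_{\overline{Z}}\tr_{s}\Big[\frac{N}{2}\big(g\cdot f'(\overline{D}_{t})\big)(g^{-1}x,x)\Big]\,dv_{\overline{Z}}(x).
$$
For $g=\Id$ this integral over $\overline{Z}=Z\cup_Y Z'$ splits as twice the integral over $Z$ (by the $\phi$-invariance of all the geometric data), giving (after identifying the two copies) precisely $f^{\wedge}(\overline{C}'_{t})$ evaluated diagonally. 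For $g=\phi$, the integrand involves $\phi^{*}_{\phi^{-1}(x)}f'(\overline{D}_{t})(\phi^{-1}(x),x)$, which is exactly the second term appearing on the right-hand side of the double formula \eqref{1.74} in Lemma \ref{l.46}. Thus, by \eqref{1.74} applied separately with ${\rm\mathbf{bd}}=0$ (absolute) and ${\rm\mathbf{bd}}=1$ (relative),
$$
f'(D^{\rm abs}_{t})(x,x)+f'(D^{\rm rel}_{t})(x,x)=2\,f'(\overline{D}_{t})(x,x),\qquad
f'(D^{\rm abs}_{t})(x,x)-f'(D^{\rm rel}_{t})(x,x)=2\,\phi^{*}_{\phi^{-1}(x)}f'(\overline{D}_{t})(\phi^{-1}(x),x),
$$
for $x\in Z$. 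Integrating against $\tfrac{N}{2}\tr_s$ and using that $N$ on $\overline{Z}$ restricts to $N$ on each copy $Z,Z'$ gives
$$
f^{\wedge}_{\Id}(\overline{C}'_{t})=f^{\wedge}(C'^{\rm abs}_{t})+f^{\wedge}(C'^{\rm rel}_{t}),\qquad
f^{\wedge}_{\phi}(\overline{C}'_{t})=f^{\wedge}(C'^{\rm abs}_{t})-f^{\wedge}(C'^{\rm rel}_{t}),
$$
i.e. in both cases $f^{\wedge}_{g}(\overline{C}'_{t})=f^{\wedge}(C'^{\rm abs}_{t})+\chi(g)\,f^{\wedge}(C'^{\rm rel}_{t})$.

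\textbf{Step 2: Match the counterterms.} It remains to check that the subtracted polynomial counterterms in \eqref{6.1}, namely $\frac{\chi'_{g}(\overline{Z},\overline{F})}{2}f'(0)$ and $\big(\frac{1}{4}m\rk(\overline{F})\chi_{g}(\overline{Z})-\frac{\chi'_{g}(\overline{Z},\overline{F})}{2}\big)f'(\tfrac{i\sqrt t}{2})$, decompose the same way. This is a purely cohomological/combinatorial computation: by the Mayer--Vietoris sequence for $\overline{Z}=Z\cup_Y Z$ together with the long exact sequence of the pair $(Z,Y)$ and the $\phi$-action, one has $\chi_{\Id}(\overline{Z})=\chi(Z,\mathbb{C})+\chi(Z,Y,\mathbb{C})$ and $\chi_{\phi}(\overline{Z})=\chi(Z,\mathbb{C})-\chi(Z,Y,\mathbb{C})$, and likewise $\chi'_{g}(\overline{Z},\overline{F})=\chi'(Z,F)+\chi(g)\chi'(Z,Y,F)$; both follow because $H^{\bullet}(\overline{Z},\overline{F})$ decomposes $\phi$-equivariantly into its $(\pm1)$-eigenspaces, which are canonically $H^{\bullet}(Z,F)$ and $H^{\bullet}(Z,Y,F)$ (the $\phi$-anti-invariant forms being precisely those satisfying the relative boundary condition). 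Since $\rk(\overline{F})=\rk(F)$ and $\dim\overline{Z}=m$, the counterterm for $g$ equals the absolute counterterm plus $\chi(g)$ times the relative counterterm. Combining Steps 1 and 2 and integrating $-\int_0^{\infty}(\,\cdot\,)\frac{dt}{t}$ yields \eqref{6.3}.

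\textbf{Expected main obstacle.} The genuinely delicate point is the precise identification in Step 1 of $N$ and the supertrace across the gluing, together with the sign bookkeeping in the reflection $\phi^*dx_m=-dx_m$: one must ensure that the factor $(-1)^{{\rm\mathbf{bd}}}$ in Lemma \ref{l.46} and the way $\widehat{c}(dx_m)$ exchanges the $\mathbb{Z}_2$-grading of $\Lambda(N^*)$ (as already exploited in the proof of Theorem \ref{t1.1}, see \eqref{1.80}) combine so that the two signed trace identities above hold \emph{on the nose}, not merely up to a sign. Once this local analysis near $Y$ is pinned down exactly as in \eqref{1.67}--\eqref{1.72}, the rest is formal. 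The convergence of the $t$-integral and integrability of all integrands is already guaranteed by Theorem \ref{t1.4} and its equivariant analogue in \cite[Def. 3.31]{BGo}.
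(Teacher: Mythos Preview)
Your proposal is correct and follows essentially the same approach as the paper's own proof, which is a one-line citation of \cite[(2.14)]{BruMa12}, Lemmas \ref{l1.1}, \ref{l.46}, Definitions \ref{d.6}, \ref{d.20} and \eqref{e.568}. You have simply spelled out what those citations amount to: the double formula \eqref{1.74} for $f'(D_t)$ gives the identity $f^{\wedge}_{g}(\overline{C}'_{t})=f^{\wedge}(C'^{\rm abs}_{t})+\chi(g)f^{\wedge}(C'^{\rm rel}_{t})$ of Step~1, and the reference \cite[(2.14)]{BruMa12} encodes exactly the eigenspace decomposition $H^{\bullet}(\overline{Z},\overline{F})^{\pm}\cong H^{\bullet}_{\rm abs/rel}(Z,F)$ that you invoke in Step~2 to match the counterterms.
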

\begin{proof}
The equation (\ref{6.3}) is obtained by \cite[(2.14)]{BruMa12}, Lemmas \ref{l1.1}, \ref{l.46}, Definitions \ref{d.6}, \ref{d.20} and (\ref{e.568}).
\end{proof}

\subsection{Double formula for the combinatorial torsion form}\label{ss2.5}
 By our assumption (\ref{e.650}), the fiberwise Morse function $h$ induces naturally a $\mathbb{Z}_{2}-$invariant Morse function $\overline{h}$ on $\overline{M}$,
 such that $\overline{h}|_{M}=h$. Then the Thom-Smale complex of flat vector bundles
\begin{align}\begin{aligned}\label{6.4}
\big(C^{\bullet}(W^{u}_{\overline{Z}}, \overline{F}), \nabla^{C^{\bullet}(W^{u}_{\overline{Z}}, \overline{F})},\widetilde{\partial}\big)
\end{aligned}\end{align}
 admits a natural $\mathbb{Z}_{2}-$action of
$\phi$. By \cite[Def. 1.29]{BGo}, we associate a $\mathbb{Z}_{2}-$equivariant torsion form $T_{f, g}(A^{C^{\bullet}(W^{u}_{\overline{Z}}, \overline{F})}, h^{C^{\bullet}(W^{u}_{\overline{Z}}, \overline{F})})$ to (\ref{6.4}).

The bundle $C^{\bullet}(W^{u}_{\overline{Z}}, \overline{F})$ with the action of $\phi$ has an eigen-decomposition:
\begin{align}\begin{aligned}\label{6.5}
C^{\bullet}(W^{u}_{\overline{Z}}, \overline{F})=C^{\bullet}(W^{u}_{\overline{Z}}, \overline{F})^{+}\bigoplus C^{\bullet}(W^{u}_{\overline{Z}}, \overline{F})^{-},
\end{aligned}\end{align}
where $C^{\bullet}(W^{u}_{\overline{Z}}, \overline{F})^{\pm}$ is the subbundle of
$C^{\bullet}(W^{u}_{\overline{Z}}, \overline{F})$ on which $\phi$ acts by multiplication by
$\pm 1$. As $A^{C^{\bullet}(W^{u}_{\overline{Z}}, \overline{F})}$ and $h^{C^{\bullet}(W^{u}_{\overline{Z}}, \overline{F})}$
induce the corresponding flat superconnections and metrics on the sub-complex of flat vector bundles
$(C^{\bullet}(W^{u}_{\overline{Z}}, \overline{F})^{\pm}, \widetilde{\partial})$, then we can define their combinatorial torsion forms
 $T_{f}(A^{C^{\bullet}(W^{u}_{\overline{Z}}, \overline{F})^{\pm}}, h^{C^{\bullet}(W^{u}_{\overline{Z}}, \overline{F})^{\pm}})$ by Definition \ref{d.9}.

\begin{prop}\label{p6.2}The following identity holds in $Q^{S}/Q^{S, 0}$, for $g\in \mathbb{Z}_{2}$
\begin{align}\begin{aligned}\label{6.6}
&T_{f, g}\big(A^{C^{\bullet}(W^{u}_{\overline{Z}}, \overline{F})}), h^{C^{\bullet}(W^{u}_{\overline{Z}}, \overline{F})}\big)\\
&=T_{f}\big(A^{C^{\bullet}(W^{u}_{\overline{Z}}, \overline{F})^{+}}, h^{C^{\bullet}(W^{u}_{\overline{Z}}, \overline{F})^{+}}\big)
+\chi(g)T_{f}\big(A^{C^{\bullet}(W^{u}_{\overline{Z}}, \overline{F})^{-}}, h^{C^{\bullet}(W^{u}_{\overline{Z}}, \overline{F})^{-}}\big),
\end{aligned}\end{align}
where $\chi$ is the nontrivial character of $\mathbb{Z}_{2}$.
\end{prop}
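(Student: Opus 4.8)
The plan is to reduce the equivariant torsion form $T_{f,g}$ on the $\mathbb{Z}_2$-equivariant complex $\big(C^{\bullet}(W^u_{\overline Z},\overline F),\widetilde\partial\big)$ to the ordinary torsion forms of the $\pm 1$-eigen-subcomplexes, exploiting that the $\mathbb{Z}_2$-action $\phi$ is unitary (it preserves $h^{C^{\bullet}(W^u_{\overline Z},\overline F)}$, since $h^{\overline F}$ is $\mathbb{Z}_2$-invariant by the construction in Definition \ref{d.39}) and commutes with both the flat superconnection $A^{C^{\bullet}(W^u_{\overline Z},\overline F)}=\widetilde\partial+\nabla^{C^{\bullet}(W^u_{\overline Z},\overline F)}$ and the metric. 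Because $\phi$ is unitary and $\phi^2=\Id$, the eigenbundle decomposition (\ref{6.5}) is $h^{C^{\bullet}(W^u_{\overline Z},\overline F)}$-orthogonal and is preserved by $\widetilde\partial$, by $\nabla^{C^{\bullet}(W^u_{\overline Z},\overline F)}$, by their adjoints $A''$ and $A''_t$, and by the number operator $N$ (the grading is $\phi$-invariant). Consequently every operator entering the definition of the equivariant torsion integrand — $X_t=\tfrac12(A''_t-A')$, $f'(X_t)$, $N$ — is block-diagonal with respect to (\ref{6.5}).

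The key step is then a purely linear-algebraic observation about equivariant supertraces. For an endomorphism $Q$ commuting with $\phi$ and preserving (\ref{6.5}), one has $\tr_s\big[\phi\,Q\big]=\tr_s\big[Q|_{C^+}\big]-\tr_s\big[Q|_{C^-}\big]$ while $\tr_s[Q]=\tr_s\big[Q|_{C^+}\big]+\tr_s\big[Q|_{C^-}\big]$, since $\phi$ acts by $+1$ on $C^+$ and $-1$ on $C^-$. Writing $\chi(g)=+1$ for $g=\Id$ and $\chi(g)=-1$ for $g=\phi$, both cases are subsumed in
\begin{align}\begin{aligned}\label{e.eqsplit}
\tr_s\big[g\,Q\big]=\tr_s\big[Q|_{C^+}\big]+\chi(g)\,\tr_s\big[Q|_{C^-}\big].
\end{aligned}\end{align}
Applying (\ref{e.eqsplit}) with $Q=\tfrac{N}{2}f'(X_t)$ gives the corresponding identity for the integrand $f^{\wedge}_g(A',h^{C^{\bullet}(W^u_{\overline Z},\overline F)}_t)$; applying it with $Q=f(X_t)$ and $Q=\Id$ gives the analogous splittings for the degree-counting constants appearing in Definition \ref{d.9}. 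Specifically, $d\big(C^{\bullet}(W^u_{\overline Z},\overline F)\big)$ and $d\big(H(C^{\bullet})\big)$ split as the sum of their $\pm$-counterparts (note $H(C^{\bullet}(W^u_{\overline Z},\overline F))=H(\overline Z,\overline F)$ also decomposes under $\phi$, and the adapted-metric issue is vacuous since in our application all relevant cohomology vanishes by (\ref{e.374})). Here I will use the fact, recorded in \cite[Def. 1.29]{BGo}, that the equivariant torsion form $T_{f,g}$ is defined by exactly the same formula as in Definition \ref{d.9} but with $\tr_s$ replaced by $\tr_s[g\,\cdot\,]$ throughout — including in the two subtraction terms.

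Assembling these, the equivariant torsion integrand for $g$ equals the $C^+$-integrand plus $\chi(g)$ times the $C^-$-integrand, term by term, and since each of the three subcomplexes — $C^{\bullet}(W^u_{\overline Z},\overline F)$, $C^{\bullet}(W^u_{\overline Z},\overline F)^+$, $C^{\bullet}(W^u_{\overline Z},\overline F)^-$ — individually satisfies the small-$t$ and large-$t$ asymptotics guaranteeing integrability (Theorem \ref{t1.4} applied fiberwise, or directly \cite[Thm. 2.22]{BL} in this finite-dimensional setting), we may integrate against $dt/t$ and obtain (\ref{6.6}). The main obstacle is bookkeeping rather than conceptual: one must check that the two normalizing constants in the definition of the torsion form are the ones that Bismut--Goette use in their equivariant definition and that they split correctly under (\ref{e.eqsplit}); once the sign/normalization conventions are matched, the equality is immediate. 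In short, the proof follows \cite[Prop. 2.1]{BruMa12} mutatis mutandis, replacing the analytic Bismut--Lott superconnection by the finite-dimensional flat superconnection $\widetilde\partial+\nabla^{C^{\bullet}(W^u_{\overline Z},\overline F)}$ of the Thom--Smale complex.
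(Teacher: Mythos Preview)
Your proposal is correct and follows essentially the same approach as the paper: the paper's proof is a single sentence stating that (\ref{6.6}) follows from \cite[Def.~1.29]{BGo} and the decomposition (\ref{6.5}), and you have simply spelled out what that means --- the eigen-decomposition is orthogonal and preserved by all operators in the torsion integrand, so $\tr_s[g\,\cdot\,]$ splits as $\tr_s[\cdot|_{C^+}]+\chi(g)\tr_s[\cdot|_{C^-}]$ and one integrates term by term.

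One small remark: your parenthetical invoking (\ref{e.374}) is misplaced. That equation says $H^*(C^*)=0$ for the \emph{total complex} of the double complex (\ref{e.373}), not for the Thom--Smale complex $C^{\bullet}(W^u_{\overline Z},\overline F)$ itself, whose cohomology $H(\overline Z,\overline F)$ is in general nonzero. Fortunately this does not matter: there is no ``adapted-metric issue'' here at all (that arises only in Theorem~\ref{t.15} for spectral sequences), and the constants $d(E)$, $d(H(E))$ in Definition~\ref{d.9} split under (\ref{6.5}) simply because $\phi$ preserves both the grading and the harmonic subspace. You can delete that aside without loss.
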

\begin{proof}
The formula (\ref{6.6}) follows from \cite[Def. 1.29]{BGo} and the decomposition (\ref{6.5}).
\end{proof}

\section{Gluing formula with the existence of fiberwise Morse function}\label{s.3}
In this section, we will prove Theorem \ref{t.14} under the existence of fiberwise Morse function
by using the ideal similar to that of Br\"{u}ning and Ma \cite{BruMa12}.

Roughly speaking, our strategy to prove Theorem \ref{t.14} is: First, we establish some comparison theorems between analytic torsion forms and
 combinatorial torsion forms on $M$
(resp. $M_{1}$ and $M_{2}$). Second, we prove a ``gluing" formula for the three combinatorial torsion forms introduced in Section \ref{s.2}.
In one-point case, i.e., $S$ is a point, this relation is a trivial result (cf. \cite[(3.71)]{BruMa12}). In the family case, the gluing relation is non-trivial, which is the main difficulty in generalizing the result of Br\"{u}ning-Ma to
the family case. Last, we obtain the gluing formula of analytic torsion forms through that of the combinatorial torsion form in certain sense.

In Section \ref{ss2.3}, we state a key result, Theorem \ref{t.17}, which will be proved in the next subsections.
Then we establish the gluing formula (\ref{e.355}) of analytic torsion forms.
The remaining sections are all devoted to prove Theorem \ref{t.17}.
In Section \ref{ss2.6}, we prove some intermediate lemmas.
In Section \ref{ss2.7}, we explicitly compute certain combinatorial torsion forms $T_{f}\big(A^{E_{k}}_{k}, h^{E_{k}}\big)$ associated to certain spectral sequences.
These terms will be responsible for the appearance of the summand $\frac{1}{2}\log 2\rk(F)\chi(Y)$ in (\ref{e.355}).
In Section \ref{ss2.8}, we apply a result due to Bismut and Goette \cite{BGo} for the equivariant analytic torsion form
 to link the analytic torsion forms and their combinatorial counterparts.

\subsection{The main result}\label{ss2.3}

We state a theorem which will be proved in the next subsections.
\begin{thm}\label{t.17}The following identity holds in $Q^{S}/Q^{S, 0}$
\begin{align}\begin{aligned}\label{5.11}
\mathscr{T}&(T^{H}M, g^{TZ}, h^{F})-\mathscr{T}_{\rm abs}(T^{H}M_{1}, g^{TZ_{1}}, h^{F})-\mathscr{T}_{\rm rel}(T^{H}M_{2}, g^{TZ_{2}}, h^{F})\\
&=\frac{\log 2}{2}\rk(F)\chi(Y)-T_{f}\left(A^{{'E}_{0}}_{0}, h^{{'E}_{0}}\right)-\widetilde{f}\left(\nabla^{{'E}_{1}},
h^{{'E}_{1}}_{L^{2}}, h^{{'E}_{1}}\right).\end{aligned}\end{align}
\end{thm}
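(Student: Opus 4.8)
The plan is to transport the strategy of Br\"uning--Ma \cite{BruMa12} to the family setting, replacing each numerical cancellation in their argument by an identity in $Q^S/Q^{S,0}$ obtained from the filtered-complex and spectral-sequence torsion form identities recorded above (Theorems \ref{t.16}, \ref{t.21}, \ref{t.15} and Lemmas \ref{l.36}, \ref{l.37}, \ref{l.49}). The first step is to reduce the two boundary torsion forms to closed fibrations: applying the double construction of Definition \ref{d.39} to $M_1$ and to $M_2$ produces closed fibrations $\overline{M_1},\overline{M_2}\to S$ carrying the $\mathbb{Z}_2$-actions generated by $\phi$, and by assumption (\ref{e.650}) the fiberwise Morse function $h$ doubles to $\mathbb{Z}_2$-invariant fiberwise Morse functions $\overline h_i$ on $\overline{M_i}$ whose fiberwise gradients still satisfy the Smale transversality conditions (here one uses (\ref{e.636}) so that the boundary critical points keep their index, and (\ref{e.650}) together with the product structure (\ref{e.761}) so that the gradient flow is a product near $Y$). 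The double formulas---Proposition \ref{p6.1} on the analytic side and Proposition \ref{p6.2} on the combinatorial side---then exhibit $\mathscr{T}_{\rm abs}(T^HM_1,g^{TZ_1},h^F)$, $\mathscr{T}_{\rm rel}(T^HM_2,g^{TZ_2},h^F)$ and the absolute (resp.\ relative) combinatorial torsion forms on $M_1$ (resp.\ $M_2$) as the $\pm$-eigencomponents (corresponding to $g=\Id,\phi$) of $\mathbb{Z}_2$-equivariant torsion forms on $\overline{M_1},\overline{M_2}$.

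Next I would apply the equivariant family extension of the Bismut--Zhang theorem due to Bismut--Goette \cite{BGo} on the three closed fibrations $M$, $\overline{M_1}$, $\overline{M_2}$: for each of them it expresses the ($\mathbb{Z}_2$-equivariant, where relevant) analytic torsion form as the corresponding combinatorial torsion form, modified by a $\widetilde{f}$-term comparing the $L^2$-metric with the combinatorial (Morse) metric on the cohomology bundles, and by a purely local term built from the Mathai--Quillen current of $(TZ,\nabla^{TZ})$ and the function $\mathbf{V}$ attached to the fiberwise gradient flow. The crucial point, and the step I expect to be the main obstacle, is that in the combination $\mathscr{T}(T^HM,g^{TZ},h^F)-\mathscr{T}_{\rm abs}(T^HM_1,g^{TZ_1},h^F)-\mathscr{T}_{\rm rel}(T^HM_2,g^{TZ_2},h^F)$ all of these local terms must cancel: the identity part localizes to an integral over $Z$ (resp.\ $\overline{Z_i}$) of a local density which is additive for $Z=Z_1\cup_Y Z_2$ and whose boundary contribution near $Y$ vanishes by the product structure (\ref{e.360})--(\ref{e.361}); the $g=\phi$ part localizes on the fixed-point submanifold $X$, where the product structure again forces the absolute and relative boundary contributions to cancel. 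This is a family-and-equivariant version of the boundary computations of \cite{BruMa12} and \cite{BGo}, and the $\mathbf{V}$-invariant contributions have to be tracked with care; this is the content of Section \ref{ss2.8}.

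Once the local terms are gone, the left side of (\ref{5.11}) equals the combination of the combinatorial torsion forms of the three Thom--Smale complexes of $h$ plus the residual $\widetilde{f}$-corrections on cohomology, and two bookkeeping tasks remain. First, the $\mathbb{Z}_2$-eigenbundle isomorphisms $C^\bullet(W^u_{\overline{Z_i}},\overline F)^{\pm}\cong C^\bullet_{\rm bd}(W^u_{Z_i},F)$ used in undoubling are not isometric---a critical point of $h$ off $Y$ is counted twice on the double, whereas one on $Y$ contributes once to the $+$-side and not at all to the $-$-side---so each undoubling costs a metric rescaling by $\sqrt2$; computing the resulting torsion-form discrepancies via Lemma \ref{l.49} and its spectral-sequence refinements (the explicit computation of the forms $T_f(A^{E_k}_k,h^{E_k})$ in Section \ref{ss2.7}) produces terms proportional to $\log 2$ and to Euler characteristics, which recombine, using $\chi(Z)=\chi(Z_1)+\chi(Z_2)-\chi(Y)$ and the Morse identity $\chi(Y)=\sum_i(-1)^i\#B^i_\partial$, exactly into $\tfrac{\log 2}{2}\rk(F)\chi(Y)$. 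Second, the three Thom--Smale complexes fit into the short exact sequence $0\to C^\bullet(W^u_{Z_2}/W^u_Y,F)\to C^\bullet(W^u_Z,F)\to C^\bullet(W^u_{Z_1},F)\to 0$, i.e.\ as the rows of the double complex (\ref{e.373}); applying Theorem \ref{t.15} to its two filtrations and using $H^\bullet(C^\bullet)=0$, the exactness of (\ref{e.378}) and (\ref{e.380}) and the exact split of (\ref{e.382}) gives $T_f(A^{C^\bullet},h^{C^\bullet})=0$ together with the decomposition (\ref{e.384}), hence (\ref{e.413}), which organizes the three combinatorial torsion forms together with their $L^2$-versus-Morse cohomology corrections exactly into $-T_f(A^{{'E}_0}_0,h^{{'E}_0})-\widetilde{f}(\nabla^{{'E}_1},h^{{'E}_1}_{L^2},h^{{'E}_1})$. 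Collecting these contributions yields (\ref{5.11}).
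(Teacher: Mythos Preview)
Your proposal is correct and follows essentially the same route as the paper's proof in Sections \ref{ss2.6}--\ref{ss2.8}: double $M_1,M_2$ to closed fibrations, apply the Bismut--Goette theorem on $M,\overline{M_1},\overline{M_2}$, cancel the local (Mathai--Quillen and ${}^0I_g$) terms to reach (\ref{6.13}), then undouble the combinatorial torsion forms via the auxiliary double complexes (\ref{e.422}), (\ref{e.423}) and Theorem \ref{t.15} (Lemmas \ref{l.40}, \ref{l.41}, \ref{l6.1}, \ref{l.55}). Two small corrections to your bookkeeping: the $\log 2$ contribution arises \emph{only} on the $M_1$ side (Lemma \ref{l6.1}), since $\psi_2^-$ is already isometric while $\psi_1^+$ is not (Lemma \ref{l.38}), so no Euler-characteristic additivity for $Z$ is needed; and (\ref{e.413}) is not used to prove Theorem \ref{t.17}---the final reorganization of the three Thom--Smale torsion forms and their cohomology corrections into $-T_f(A^{{'E}_0}_0,h^{{'E}_0})-\widetilde f(\nabla^{{'E}_1},h^{{'E}_1}_{L^2},h^{{'E}_1})$ is just the definitions (\ref{6.94}), (\ref{6.95}).
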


From (\ref{e.413}), Lemma \ref{l.37} and Theorem \ref{t.17}, we get
\begin{thm}\label{t5.2}The following identity holds in $Q^{S}/Q^{S, 0}$
\begin{align}\begin{aligned}\label{5.12}
\mathscr{T}(T^{H}M, g^{TZ}, h^{F})-&\mathscr{T}_{\rm abs}(T^{H}M_{1}, g^{TZ_{1}}, h^{F})-\mathscr{T}_{\rm rel}(T^{H}M_{2}, g^{TZ_{2}}, h^{F})\\
&=\frac{\log 2}{2}\rk(F)\chi(Y)+T_{f}(A^{\mathscr{H}}, h_{L^{2}}^{\mathscr{H}}).\end{aligned}\end{align}
\end{thm}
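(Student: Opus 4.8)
The plan is to obtain Theorem \ref{t5.2} as a purely algebraic consequence of three facts already at hand: the key comparison Theorem \ref{t.17}, the vanishing identity (\ref{e.413}) for the torsion forms attached to the double complex (\ref{e.373}), and the splitting Lemma \ref{l.37}. The genuine analytic work --- the double formulas of Sections \ref{ss2.4}--\ref{ss2.5} and the Bismut--Goette comparison between analytic and combinatorial torsion forms --- is concentrated in the proof of Theorem \ref{t.17}; once that is granted, only bookkeeping in $Q^{S}/Q^{S,0}$ remains, and this is exactly the precision in which (\ref{5.12}) is asserted.

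First I would start from the right-hand side of (\ref{5.11}) and eliminate the two combinatorial correction terms $-T_{f}(A^{{'E}_{0}}_{0}, h^{{'E}_{0}})$ and $-\widetilde{f}(\nabla^{{'E}_{1}}, h^{{'E}_{1}}_{L^{2}}, h^{{'E}_{1}})$. Recall that (\ref{e.413}) was deduced from the vanishing $T_{f}(A^{C^{*}}, h^{C^{*}})=0$ of the torsion form of the total complex --- which follows, via Theorem \ref{t.15} applied to the filtration (\ref{e.372}), from the exactness and splitting of the columns (\ref{e.382}), i.e. from (\ref{e.385}) and (\ref{e.386}) --- together with the anomaly formula Theorem \ref{t.16} applied to the rows, namely (\ref{5.9}) and (\ref{5.35}). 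Solving (\ref{e.413}) gives, in $Q^{S}/Q^{S,0}$,
\begin{align*}
-T_{f}\!\left(A^{{'E}_{0}}_{0}, h^{{'E}_{0}}\right)-\widetilde{f}\!\left(\nabla^{{'E}_{1}}, h^{{'E}_{1}}_{L^{2}}, h^{{'E}_{1}}\right)
= T_{f}\!\left(A^{{'E}_{1}}_{1}, h^{{'E}_{1}}_{L^{2}}\right)+T_{f}\!\left(A^{{'E}_{2}}_{2}, h^{{'E}_{2}}_{L^{2}}\right).
\end{align*}
Substituting this into (\ref{5.11}) and then invoking Lemma \ref{l.37}, which identifies the right-hand side above with $T_{f}(A^{\mathscr{H}}, h_{L^{2}}^{\mathscr{H}})$, turns the right-hand side of (\ref{5.11}) into exactly $\tfrac{\log 2}{2}\rk(F)\chi(Y)+T_{f}(A^{\mathscr{H}}, h_{L^{2}}^{\mathscr{H}})$, which is (\ref{5.12}).

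The only point in this deduction that needs a moment's attention is the bookkeeping of the $Q^{S,0}$-ambiguities: the identities (\ref{e.384}), (\ref{5.9}), (\ref{5.35}), (\ref{5.21}) and the functoriality formula of Lemma \ref{l.36} each hold only modulo exact even forms, so the conclusion can legitimately be stated only in $Q^{S}/Q^{S,0}$, as it is. One also checks that the hypothesis of Theorem \ref{t.15} --- existence of a metric adapted to the induced filtration on $H^{*}(C^{*})$ --- is automatically satisfied, since every row of (\ref{e.373}) is exact, whence $H^{*}(C^{*})=0$ and $E_{\infty}=0$. Thus the real obstacle lies not in this deduction but in Theorem \ref{t.17}, whose proof occupies the remaining subsections.
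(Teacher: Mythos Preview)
Your proposal is correct and follows exactly the same approach as the paper: the paper's proof of Theorem \ref{t5.2} is a one-line deduction from (\ref{e.413}), Lemma \ref{l.37} and Theorem \ref{t.17}, and your argument unpacks precisely this chain of implications. The additional remarks you make about the provenance of (\ref{e.413}) and the $Q^{S,0}$-ambiguities are accurate and simply supply detail that the paper leaves implicit.
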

 The next subsections will be devoted to prove Theorem \ref{t.17}.

\subsection{Some intermediate results}\label{ss2.6}
Let $j_{1}:M_{1}\hookrightarrow \overline{M_{1}}$ be the including map of $M_{1}$ into its double $\overline{M_{1}}$ (see Def. \ref{d.39}) as one copy and
$j_{2}$ be that of $M_{1}$ into $\overline{M_{1}}$ as another copy. We identify $Z_{1}$ with $j_{1}(Z_{1})$ and denote $j_{2}(Z_{1})$ by $Z'_{1}$, then we have
 \begin{align}\begin{aligned}\label{e.417}
 j_{1}=\phi\circ j_{2}\,, \quad j_{2}= \phi\circ j_{1},\quad \overline{Z_{1}}=Z_{1}\cup_{X} Z'_{1}.
\end{aligned}\end{align}
Let $h_{1}=h|_{M_{1}}$ and $\overline{h_{1}}$ be the corresponding $\mathbb{Z}_{2}$-invariant Morse function on $\overline{M_{1}}$ (see Section \ref{ss2.4}). Denote by $\mathbb{C}^{+}$, $\mathbb{C}^{-}$ the trivial and the nontrivial one dimensional complex $\mathbb{Z}_{2}-$representation, respectively, and let $1_{\mathbb{C}^{+}}$, $1_{\mathbb{C}^{-}}$ be their unit elements. We define a $\mathbb{Z}_{2}$-equivariant short exact sequence of Thom-Smale complexes of flat vector bundles (cf. \cite[(2.7)]{BruMa12})
\begin{align}\begin{aligned}\label{6.49}
0 \longrightarrow C^{\bullet}(W_{\overline{Z_{1}}}^{u}, \overline{F})^{+}
\overset{\psi^{+}_{1}}\rightarrow C^{\bullet}(W^{u}_{Z_{1}}, F)\otimes \mathbb{C}^{+}\rightarrow 0,
\end{aligned}\end{align}
given by (cf. (\ref{6.8}))
\begin{align}\begin{aligned}\label{6.50}
\psi^{+}_{1}\left(\mathfrak{a}^{*}\right)
=\frac{\sqrt{2}}{2}\Big(j_{1}^{*}(\mathfrak{a}^{*}|_{Z_{1}}) +j_{2}^{*}(\mathfrak{a}^{*}|_{Z'_{1}})\Big)\otimes 1_{\mathbb{C}^{+}}.
\end{aligned}\end{align}

For the fibration $M_{2}$, we have the following $\mathbb{Z}_{2}$-equivariant short exact sequence:
\begin{align}\begin{aligned}\label{6.49}
 0\rightarrow C^{\bullet}(W^{u}_{Z_{2}}/W^{u}_{Y}, F)\otimes \mathbb{C}^{-}\overset{\psi^{-}_{2}} \longrightarrow C^{\bullet}(W_{\overline{Z_{2}}}^{u}, \overline{F})^{-}\rightarrow 0,
\end{aligned}\end{align}
given by
 \begin{align}\begin{aligned}\label{e.811}
\psi^{-}_{2}\left(\mathfrak{b}^{*}\otimes 1_{\mathbb{C}^{-}}\right)
=\frac{\sqrt{2}}{2}\Big((j_{1}^{-1})^{*}\mathfrak{b}^{*} -(j_{2}^{-1})^{*}\mathfrak{b}^{*}\Big),
\end{aligned}\end{align}
where $j_{1},j_{2}$ are defined in the same way as for $M_{1}$.

\begin{lemma}\label{l.38}
The following diagrams of flat vector bundles
\begin{align}\begin{aligned}\label{e.422}
\xymatrix{
&&&\\
0\ar[r]&C^{\bullet}(W^{u}_{\overline{Z_{1}}}, \overline{F})^{+}\ar[r]^{ \psi^{+}_{1}}\ar[u]^{\widetilde{\overline{\partial}}}&C^{\bullet}(W^{u}_{Z_{1}}, F)\otimes \mathbb{C}^{+}
\ar[u]^{\widetilde{\partial}}\ar[r]&0,
}
\end{aligned}\end{align}
and
\begin{align}\begin{aligned}\label{e.423}
\xymatrix{
&&&\\
0\ar[r]&C^{\bullet}(W^{u}_{Z_{2}}/W^{u}_{Y}, F)\otimes \mathbb{C}^{-}\ar[u]^{\widetilde{\partial}}\ar[r]^{\quad\quad\psi_{2}^{-}}&C^{\bullet}(W^{u}_{\overline{Z_{2}}}, \overline{F})^{-}\ar[r]\ar[u]^{\widetilde{\overline{\partial}}}&0,
}
\end{aligned}\end{align}
are commutative. Moreover $\psi^{-}_{2}$ is isometric, but $\psi^{+}_{1}$ is not isometric with respect to the metrics induced by $h^{F}$ on these chain groups {\rm (cf. \cite[(1.34)]{BruMa12})}.
\end{lemma}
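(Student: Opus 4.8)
The plan is to reduce every assertion of Lemma~\ref{l.38} to the combinatorics of the critical set of the doubled fiberwise Morse functions $\overline{h_1},\overline{h_2}$ under the $\mathbb Z_2$-action, exactly in the spirit of the manifold case of Br\"uning--Ma. The first step is to describe that critical set. Using (\ref{e.636}) and (\ref{e.650}), a fiberwise critical point $y\in{\bf B}_\partial$ of $h|_X$ is again a fiberwise critical point of $\overline{h_1}$ whose Hessian is block--diagonal along $T_yY\oplus\mathbb R\frac{\partial}{\partial x_m}$, with positive normal eigenvalue; hence $W^u_{\overline Z_1}(y)\subset Y$, $\text{ind}_{\overline Z_1}(y)=\text{ind}_Y(y)$, and the involution $\phi$ — which fixes $y$, acts by $-1$ on $\frac{\partial}{\partial x_m}$, by the identity on $T_yY$ and on $\overline F_y$ — acts trivially on $F_y\otimes o^u_y$. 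The remaining fiberwise critical points of $\overline{h_1}$ come in $\phi$-pairs (one in $Z_1$, its mirror in $Z_1'$), so one obtains a $\phi$-equivariant orthogonal decomposition
\[
C^{\bullet}(W^{u}_{\overline Z_1},\overline F)=C^{\bullet}_{\rm int}(Z_1)\oplus C^{\bullet}_{\rm int}(Z_1')\oplus C^{\bullet}(W^{u}_{Y},F),
\]
on which $\phi^{*}$ exchanges the first two summands and is the identity on the third. Hence $C^{\bullet}(W^{u}_{\overline Z_1},\overline F)^{+}\cong C^{\bullet}_{\rm int}(Z_1)\oplus C^{\bullet}(W^{u}_{Y},F)$ and $C^{\bullet}(W^{u}_{\overline Z_1},\overline F)^{-}\cong C^{\bullet}_{\rm int}(Z_1)$, with the analogous statements for $\overline Z_2$. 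Since, by definition, $C^{\bullet}(W^{u}_{Z_1},F)=C^{\bullet}_{\rm int}(Z_1)\oplus C^{\bullet}(W^{u}_{Y},F)$ (absolute case) and $C^{\bullet}(W^{u}_{Z_2}/W^{u}_{Y},F)=C^{\bullet}_{\rm int}(Z_2)$ (relative case) as graded bundles, the maps $\psi^{+}_1,\psi^{-}_2$ are, up to the scalar $\sqrt2$ on suitable summands, tautological identifications; in particular they are bundle isomorphisms.

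\textbf{Commutativity of (\ref{e.422}) and (\ref{e.423}).} Here the key point is that the fiberwise gradient flow of $\overline{h_1}$ preserves $Y=\{x_m=0\}$, because $\partial\overline{h_1}/\partial x_m$ vanishes there by (\ref{e.650}); therefore every flow line joining two critical points lying in $Z_1$ (interior or on $Y$) stays in $Z_1$ and is exactly a flow line of $h|_{M_1}$, and by the product structure (\ref{e.5})--(\ref{e.6}) of $(F,\nabla^F,h^F)$ near $X$ the relevant incidence numbers $n_\gamma$ and parallel transports $\tau_\gamma$ coincide. Thus $j_1^{*}\circ(\cdot|_{Z_1})$ and $j_2^{*}\circ(\cdot|_{Z_1'})$ each intertwine $\widetilde{\overline{\partial}}$ with $\widetilde{\partial}$; since $j_1=\phi\circ j_2$ and $\phi^{*}=\mathrm{id}$ on the $(+)$-eigenspace, a short check gives $j_1^{*}(\mathfrak a^{*}|_{Z_1})=j_2^{*}(\mathfrak a^{*}|_{Z_1'})$ for $\mathfrak a^{*}\in C^{\bullet}(W^{u}_{\overline Z_1},\overline F)^{+}$, so $\psi^{+}_1=\sqrt2\,j_1^{*}\!\circ(\cdot|_{Z_1})\otimes 1_{\mathbb C^{+}}$ is a chain map and (\ref{e.422}) commutes. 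For (\ref{e.423}) one argues identically: a relative cochain $\mathfrak b^{*}$ is supported on interior critical points of $\overline{h_2}$, so $(j_1^{-1})^{*}\mathfrak b^{*}$ and $(j_2^{-1})^{*}\mathfrak b^{*}$ are supported on the two disjoint interior copies inside $\overline Z_2$, their difference lies in the $(-)$-eigenspace (the $Y$-part is forced to vanish there because $\phi^{*}$ is trivial on $C^{\bullet}(W^{u}_{Y},F)$), and the same flow--line comparison yields $\psi^{-}_2\,\widetilde{\partial}=\widetilde{\overline{\partial}}\,\psi^{-}_2$.

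\textbf{The metric statements.} Recall that $h^{C^{\bullet}(W^{u}_{Z},F)}$ makes the oriented critical points orthogonal with fiber--lengths given by $h^{F}$, and that by (\ref{e.6}) the $h^{F}$-factors at the critical points on $Y$, and at the two copies of an interior critical point of $\overline{h_2}$, all coincide. For $\psi^{-}_2$, the two terms in $\psi^{-}_2(\mathfrak b^{*}\otimes 1_{\mathbb C^{-}})=\tfrac{\sqrt2}{2}\big((j_1^{-1})^{*}\mathfrak b^{*}-(j_2^{-1})^{*}\mathfrak b^{*}\big)$ are orthogonal of common norm $\|\mathfrak b^{*}\|$, whence $\|\psi^{-}_2(\mathfrak b^{*}\otimes 1_{\mathbb C^{-}})\|^{2}=\tfrac12(\|\mathfrak b^{*}\|^{2}+\|\mathfrak b^{*}\|^{2})=\|\mathfrak b^{*}\otimes 1_{\mathbb C^{-}}\|^{2}$, so $\psi^{-}_2$ is isometric (and, being onto the $(-)$-eigenspace, an isometric isomorphism). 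For $\psi^{+}_1$, decompose $\mathfrak a^{*}=\alpha\oplus\alpha\oplus\beta$ along the splitting above; then $\|\mathfrak a^{*}\|^{2}=2\|\alpha\|^{2}+\|\beta\|^{2}$, whereas $\psi^{+}_1(\mathfrak a^{*})=\sqrt2\,(\alpha,\beta)\otimes 1_{\mathbb C^{+}}$ has $\|\psi^{+}_1(\mathfrak a^{*})\|^{2}=2\|\alpha\|^{2}+2\|\beta\|^{2}$. Thus $\psi^{+}_1$ multiplies the length of the $C^{\bullet}(W^{u}_{Y},F)$-part by $\sqrt2$ and fails to be isometric whenever ${\bf B}_\partial\neq\emptyset$. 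It is precisely this discrepancy on the boundary factor that, via Lemma~\ref{l.49} applied in Section~\ref{ss2.7}, will produce the summand $\tfrac{\log 2}{2}\rk(F)\chi(Y)$ in Theorem~\ref{t.14}.

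\textbf{Main obstacle.} I expect the only genuinely delicate point to be the orientation and sign bookkeeping in the first two steps: pinning down the $\phi$-action on the orientation lines $o^{u}_{y}$ of the boundary critical points (so that they sit in the $(+)$-eigenspace and in no other), verifying that under $j_1=\phi\circ j_2$ the two restrictions agree on $C^{\bullet}(W^{u}_{\overline Z_1},\overline F)^{+}$, and confirming that the sign conventions of (\ref{6.8}), (\ref{6.50}), (\ref{e.811}) are compatible with the gradient--flow incidence numbers of $\overline{h_1},\overline{h_2}$. Once the product structures near $X$ are used to match flow lines and metrics, the remaining arguments are formal.
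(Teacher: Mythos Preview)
Your proposal is correct and takes essentially the same approach as the paper: both rest on the fact that the gradient flow of the doubled Morse function cannot cross $Y$ (you invoke (\ref{e.650}), the paper invokes (\ref{e.636})), so the Thom--Smale differentials on $\overline{Z_i}$ restrict compatibly with $\psi_1^+,\psi_2^-$, and both trace the failure of $\psi_1^+$ to be isometric to the extra factor $\sqrt{2}$ on the $C^{\bullet}(W^u_Y,F)$-summand. Your write-up is a bit more structural---you set up the eigenspace decomposition explicitly and argue directly at the cochain level with a full norm computation---whereas the paper verifies commutativity by pairing against chains and dualizing, and for the metric statement records only the key case (\ref{e.816}) and refers to \cite[(2.10),(2.11)]{BruMa12}; but the underlying mechanism is identical.
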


\begin{proof}
 By (\ref{e.636}), we know that the flow
 $\varphi_{t}$ ($t\geq 0$) of $\mathscr{Y}=-\nabla h$ can not pass through the frontier $Y$, i.e.,
 $$
\{\varphi_{t}(x)|t\geq 0\}\subset j_{1}(Z_{1}) (\text{resp.}\, j_{2}(Z_{1}))\quad \text{for}\, x\in j_{1}(Z_{1})\,(\text{resp.} \,j_{2}(Z_{1})).
$$
Let $\overline{{\bf{B}}_{1}}$ be the set of critical points of $\overline{h_{1}}$ and $\overline{B_{1}}$ be its fiber. Hence, for $x\in \overline{B_{1}}^{i}$, $x'\in \overline{B_{1}}^{i-1}$, if there exists an integral curve
 $\overline{\gamma}\in\Gamma_{\overline{Z_{1}}}(x, x')=W^{u}_{\overline{Z_{1}}}(x)\cap W^{s}_{\overline{Z_{1}}}(x')$, then
$x,\,x'$ and $\overline{\gamma}$ must stay in the same side of $Y$. By the above observation and (\ref{6.51}), for $x\in \overline{B}$ and $f^{*}\in \overline{F}^{*}_{x}$, $e\in o^{u}_{x}$ we get
\begin{align}\begin{aligned}\label{e.775}
\overline{\partial}(f^{*}\otimes e)=\partial(f^{*}\otimes e).
\end{aligned}\end{align}
Let $\mathfrak{a}^{*}\in C^{\bullet}(W^{u}_{\overline{Z_{1}}}, \overline{F})^{+}$, then for all $\mu\in C_{\bullet+1}(W^{u}_{Z_{1}}, F)$ by (\ref{6.50}) and (\ref{e.775}) we have
\begin{align}\begin{aligned}\label{e.776}
\sqrt{2}\langle\widetilde{\partial}\psi_{1}^{+}\mathfrak{a}^{*},\mu\rangle&=\langle j_{1}^{*}(\mathfrak{a}^{*}|_{Z_{1}}) +j_{2}^{*}(\mathfrak{a}^{*}|_{Z'_{1}}),\partial\mu\rangle=\langle \mathfrak{a}^{*}|_{Z_{1}},j_{1*}\partial\mu\rangle+\langle \mathfrak{a}^{*}|_{Z'_{1}},j_{2*}\partial\mu\rangle\\
&=\langle \mathfrak{a}^{*}|_{Z_{1}},\partial j_{1*}\mu\rangle+\langle \mathfrak{a}^{*}|_{Z'_{1}},\partial j_{2*}\mu\rangle=\langle \mathfrak{a}^{*},\overline{\partial} j_{1*}\mu\rangle+\langle \mathfrak{a}^{*},\overline{\partial} j_{2*}\mu\rangle\\
&=\langle \big(\widetilde{\overline{\partial}}\mathfrak{a}^{*}\big)|_{Z_{1}}, j_{1*}\mu\rangle+\langle \big(\widetilde{\overline{\partial}}\mathfrak{a}^{*}\big)|_{Z'_{1}}, j_{2*}\mu\rangle=\sqrt{2}\langle\psi_{1}^{+}\widetilde{\partial}\mathfrak{a}^{*},\mu\rangle,
\end{aligned}\end{align}
where $\overline{\partial}$ is the boundary operator of the Thom-Smale complex of $(\overline{M_{1}}, \overline{h_{1}})$ and $\widetilde{\overline{\partial}}$ denotes its dual. By (\ref{e.776}), we see that the diagram (\ref{e.422}) is commutative.

Now we show that the diagram (\ref{e.423}) is commutative. Let $\mathfrak{b}^{*}\in C^{\bullet}(W^{u}_{Z_{2}}/W^{u}_{Y}, F)$, for any $b\in C_{\bullet+1}(W^{u}_{\overline{Z_{2}}},\overline{F}^{*})$ we have
\begin{align}\begin{aligned}\label{e.812}
\sqrt{2}\langle\widetilde{\overline{\partial}}\psi_{2}^{-}\mathfrak{b}^{*},b\rangle&=\langle (j^{-1}_{1})^{*}\mathfrak{b}^{*} -(j^{-1}_{2})^{*}\mathfrak{b}^{*},\overline{\partial} b\rangle\\
&=\langle \mathfrak{b}^{*},j^{-1}_{1*}\big(\overline{\partial} b \cap Z_{2}\big)\rangle-\langle \mathfrak{b}^{*},j^{-1}_{2*}\big(\overline{\partial} b\cap Z'_{2}\big)\rangle
\end{aligned}\end{align}
and
\begin{align}\begin{aligned}\label{e.813}
\sqrt{2}\langle \psi_{2}^{-}\widetilde{\overline{\partial}}\mathfrak{b}^{*},b\rangle&=\langle (j^{-1}_{1})^{*}\widetilde{\overline{\partial}}\mathfrak{b}^{*} -(j^{-1}_{2})^{*}\widetilde{\overline{\partial}}\mathfrak{b}^{*},b\rangle\\
&=\langle \mathfrak{b}^{*},\overline{\partial}j^{-1}_{1*}\big( b \cap Z_{2}\big)\rangle-\langle \mathfrak{b}^{*},\overline{\partial}j^{-1}_{2*}\big( b\cap Z'_{2}\big)\rangle.
\end{aligned}\end{align}
Without lost of generality, we assume that $b\in Z'_{2}\backslash Y$ (for $b\in Y$ both (\ref{e.812}) and (\ref{e.813}) are null), hence by (\ref{e.775}) we see that
\begin{align}\begin{aligned}\label{e.814}
&j^{-1}_{2*}\big(\overline{\partial} b\cap Z'_{2}\big)=\overline{\partial}j^{-1}_{2*}\big( b\cap Z'_{2}\big),\\
&\overline{\partial}j^{-1}_{1*}\big( b \cap Z_{2}\big)=\emptyset,\quad \emptyset\neq j^{-1}_{1*}\big(\overline{\partial} b \cap Z_{2}\big)\in C_{\bullet}(W^{u}_{Y},F^{*}).
\end{aligned}\end{align}
By \cite[(1.32)]{BruMa12} and (\ref{e.814}), we have
\begin{align}\begin{aligned}\label{e.815}
\langle \mathfrak{b}^{*},j^{-1}_{1*}\big(\overline{\partial} b \cap Z_{2}\big)\rangle=\langle \mathfrak{b}^{*},\overline{\partial}j^{-1}_{1*}\big( b \cap Z_{2}\big)\rangle=0.
\end{aligned}\end{align}
By (\ref{e.812})--(\ref{e.815}), we prove that the diagram (\ref{e.423}) is commutative.

Let $\mathfrak{a}^{*}\in C^{\bullet}(W^{u}_{\overline{Z_{1}}}, \overline{F})$ be an element generated by critical points contained in $Y$, then we have
 \begin{align}\begin{aligned}\label{e.816}
\psi^{+}_{1}(\mathfrak{a}^{*})=\sqrt{2}\mathfrak{a}^{*}\otimes 1_{\mathbb{C}^{+}}.
\end{aligned}\end{align}
Similar to \cite[(2.10), (2.11)]{BruMa12}, we see that $\psi^{-}_{2}$ is isometric, but $\psi^{+}_{1}$ is not isometric with respect to the metric induced by $h^{F}$. The proof is completed.
\end{proof}

The double complex (\ref{e.422}) yields two spectral sequences: $({'E^{+}_{r}}, {{'d}}_{r})$ with the filtration (\ref{e.371}) and $( {''E^{+}_{r}}, {'{'d}}_{r})$ with the filtration (\ref{e.372}). The double complex (\ref{e.423}) yields also two spectral sequences: $({{'E}}^{-}_{r}, {{'d}}_{r})$ with the filtration (\ref{e.371}) and $( {{''E}}^{-}_{r}, {''d}_{r})$ with the filtration (\ref{e.372}).

By applying Theorem \ref{t.15} to the double complex (\ref{e.422}), we get the following lemma.

\begin{lemma}\label{l.40}The following identity holds in $Q^{S}/Q^{S, 0}$
\begin{align}\begin{aligned}\label{6.20}
T_{f}&(A^{C^{\bullet}(W_{\overline{Z_{1}}}^{u}, \overline{F})^{+}}, h^{C^{\bullet}(W_{\overline{Z_{1}}}^{u}, \overline{F})^{+}})
-\widetilde{f}\left(\nabla^{H^{\bullet}(\overline{Z_{1}}, \overline{F})^{+}}, h_{C^{\bullet}(W_{\overline{Z_{1}}}^{u}, \overline{F})^{+}}^{H^{\bullet}(\overline{Z_{1}}, \overline{F})^{+}},
 h_{L^{2}}^{H^{\bullet}(\overline{Z_{1}}, \overline{F})^{+}}\right)\\
&=T_{f}(A^{C^{\bullet}(W^{u}_{Z_{1}}, F)}, h^{C^{\bullet}(W^{u}_{Z_{1}}, F)})-\widetilde{f}\left(\nabla^{H^{\bullet}(Z_{1}, F)}, h_{C^{\bullet}(W^{u}_{Z_{1}}, F)}^{H^{\bullet}(Z_{1}, F)}, h_{L^{2}}^{H^{\bullet}(Z_{1}, F)}\right)\\
&\quad\quad\quad\quad\quad\quad-T_{f}(A_{1}^{{'E}^{+}_{1}}, h^{{'E}^{+}_{1}}_{L^{2}})+T_{f}(A^{{''E}^{+}_{0}}_{0}, h^{{''E}^{+}_{0}}).
\end{aligned}\end{align}
\end{lemma}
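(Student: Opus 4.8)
The plan is to apply Theorem~\ref{t.15} to the double complex of flat vector bundles (\ref{e.422}) for \emph{both} of its filtrations (\ref{e.371}) and (\ref{e.372}), and to compare the two resulting formulas for the torsion form $T_{f}(A^{C^{*}},h^{C^{*}})$ of its total complex $\{C^{*},D\}$. This follows the same scheme by which (\ref{e.384})--(\ref{e.387}) were derived from (\ref{e.373}); the new feature is that $\psi^{+}_{1}$ is an isomorphism of complexes which is \emph{not} isometric, so the contribution of the ``row'' $E_{0}$-page does not vanish (contrary to (\ref{e.386})) but persists as the term $T_{f}(A^{{''E}^{+}_{0}}_{0},h^{{''E}^{+}_{0}})$ appearing in (\ref{6.20}).

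First I would record the homological input. By Lemma~\ref{l.38} the map $\psi^{+}_{1}$ is a morphism of complexes of flat vector bundles over $S$, and since the row of (\ref{e.422}) is a short exact sequence it is in fact an isomorphism of complexes (this also follows from the eigenbundle decomposition of $C^{\bullet}(W^{u}_{\overline{Z_{1}}},\overline{F})$ under $\phi$ together with (\ref{6.50}) after using the $\phi$-invariance of $\mathfrak{a}^{*}$). Hence (\ref{e.422}) has exact rows and $H^{*}(C^{*})=0$. For the column filtration (\ref{e.371}) the differential ${'d}_{1}$ of $\{{'E}^{+}_{r}\}$ is the isomorphism induced by $\psi^{+}_{1}$ on the cohomologies of the two columns, namely $H^{q}(\overline{Z_{1}},\overline{F})^{+}\to H^{q}(Z_{1},F)$; therefore ${'E}^{+}_{2}={'E}^{+}_{\infty}=0$ and one may take $k_{0}=1$. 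For the row filtration (\ref{e.372}) one already has ${''E}^{+}_{1}={''E}^{+}_{\infty}=0$. In both cases, by the Remark following Theorem~\ref{t.15} applied to the zero bundle, the terms $\widetilde{f}(\nabla^{H(C^{*})},\cdot,\cdot)$ and $\widetilde{f}(\nabla^{E_{\infty}},\cdot,\cdot)$ vanish. Theorem~\ref{t.15} then gives, from the row filtration, $T_{f}(A^{C^{*}},h^{C^{*}})=T_{f}(A^{{''E}^{+}_{0}}_{0},h^{{''E}^{+}_{0}})$, and from the column filtration $T_{f}(A^{C^{*}},h^{C^{*}})=T_{f}(A^{{'E}^{+}_{0}}_{0},h^{{'E}^{+}_{0}})+T_{f}(A^{{'E}^{+}_{1}}_{1},h^{{'E}^{+}_{1}})$.

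Next I would identify the two column-side torsion forms. The complex $({'E}^{+}_{0},{'d}_{0})$ splits as the direct sum of the two columns of (\ref{e.422}) with their $h^{F}$-induced metrics, the $p=1$ column placed one degree higher; using the behaviour of $T_{f}$ under direct sums and grading shifts (as in \cite[Thm.~A1.1]{BL}, exactly as (\ref{e.791}) is obtained) and the triviality of $\mathbb{C}^{+}$, this gives $T_{f}(A^{{'E}^{+}_{0}}_{0},h^{{'E}^{+}_{0}})=T_{f}(A^{C^{\bullet}(W^{u}_{\overline{Z_{1}}},\overline{F})^{+}},h^{C^{\bullet}(W^{u}_{\overline{Z_{1}}},\overline{F})^{+}})-T_{f}(A^{C^{\bullet}(W^{u}_{Z_{1}},F)},h^{C^{\bullet}(W^{u}_{Z_{1}},F)})$. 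The metric that Theorem~\ref{t.15} puts on ${'E}^{+}_{1}$ is the one induced from the chain metrics by finite-dimensional Hodge theory, i.e. $h^{H^{\bullet}(\overline{Z_{1}},\overline{F})^{+}}_{C^{\bullet}(W^{u}_{\overline{Z_{1}}},\overline{F})^{+}}$ on the $p=0$ row and $h^{H^{\bullet}(Z_{1},F)}_{C^{\bullet}(W^{u}_{Z_{1}},F)}$ on the $p=1$ row; applying the anomaly formula Theorem~\ref{t.16} (with $H({'E}^{+}_{1})={'E}^{+}_{2}=0$) I would replace these by the $L^{2}$-metrics, obtaining $T_{f}(A^{{'E}^{+}_{1}}_{1},h^{{'E}^{+}_{1}}_{L^{2}})+\widetilde{f}(\nabla^{{'E}^{+}_{1}},h^{{'E}^{+}_{1}}_{L^{2}},h^{{'E}^{+}_{1}})$, and then expand the last term via (\ref{e.748}) as $\widetilde{f}(\nabla^{H^{\bullet}(\overline{Z_{1}},\overline{F})^{+}},h^{H^{\bullet}(\overline{Z_{1}},\overline{F})^{+}}_{L^{2}},h^{H^{\bullet}(\overline{Z_{1}},\overline{F})^{+}}_{C^{\bullet}(W^{u}_{\overline{Z_{1}}},\overline{F})^{+}})-\widetilde{f}(\nabla^{H^{\bullet}(Z_{1},F)},h^{H^{\bullet}(Z_{1},F)}_{L^{2}},h^{H^{\bullet}(Z_{1},F)}_{C^{\bullet}(W^{u}_{Z_{1}},F)})$.

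Finally I would substitute these identifications into the equality of the two expressions for $T_{f}(A^{C^{*}},h^{C^{*}})$ and rearrange, using only $\widetilde{f}(\nabla^{E},h_{0},h_{1})=-\widetilde{f}(\nabla^{E},h_{1},h_{0})$ to flip the arguments of the two $\widetilde{f}$-terms; this produces precisely (\ref{6.20}). Apart from this bookkeeping, the one point that genuinely has to be watched is the correct tracking of the degree-shift signs $(-1)^{p}$ and $(-1)^{q}$ when splitting the $E_{0}$- and $E_{1}$-page torsion forms into their column/row pieces, together with the check that the metric Theorem~\ref{t.15} places on ${'E}^{+}_{1}$ is exactly the finite-dimensional Hodge metric so that Theorem~\ref{t.16} applies verbatim; I expect this sign- and metric-bookkeeping to be the main, though routine, obstacle.
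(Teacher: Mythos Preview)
Your proposal is correct and follows essentially the same approach as the paper: apply Theorem~\ref{t.15} to the double complex (\ref{e.422}) with respect to both filtrations, use (\ref{6.21}) to eliminate the $E_{\infty}$ terms, equate the two resulting expressions to obtain (\ref{6.18}), and then invoke Theorem~\ref{t.16} to pass from the Hodge metric $h^{{'E}^{+}_{1}}$ to the $L^{2}$-metric $h^{{'E}^{+}_{1}}_{L^{2}}$ on the ${'E}^{+}_{1}$-page. Your write-up is in fact somewhat more explicit than the paper's about the identification of $T_{f}(A^{{'E}^{+}_{0}}_{0},h^{{'E}^{+}_{0}})$ with the alternating difference of the column torsion forms and about the sign-flip $\widetilde{f}(\nabla^{E},h_{0},h_{1})=-\widetilde{f}(\nabla^{E},h_{1},h_{0})$, but these are exactly the bookkeeping steps the paper summarizes in ``From Theorem~\ref{t.16}, (\ref{6.21}), (\ref{6.18}) and (\ref{6.19}), we get (\ref{6.20}).''
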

\begin{proof}
By (\ref{e.422}), we have
\begin{align}\begin{aligned}\label{6.21}
{''E}^{+}_{1}={{''E}^{+}_{\infty}}=0, \quad {{'E}^{+}_{2}}={{'E}^{+}_{\infty}}=0,
\end{aligned}\end{align}
hence by applying Theorem \ref{t.15} two times for the different filtrations, we get in $Q^{S}/Q^{S, 0}$
\begin{align}\begin{aligned}\label{6.18}
T_{f}(A^{{''E}^{+}_{0}}_{0}, h^{{''E}^{+}_{0}})=T_{f}(A^{{'E}^{+}_{0}}_{0}, h^{{'E}^{+}_{0}})+T_{f}(A^{{'E}^{+}_{1}}_{1}, h^{{'E}^{+}_{1}}),
\end{aligned}\end{align}
where for $0\leq q\leq m$
\begin{align}\begin{aligned}\label{6.19}
({'E}^{+}_{1})^{\cdot, q}&:\quad 0\longrightarrow H^{q}(\overline{Z_{1}}, \overline{F})^{+} \overset{\psi^{+}_{1}}\longrightarrow H^{q}(Z_{1}, F)\longrightarrow 0.
\end{aligned}\end{align}
In (\ref{6.19}) the isomorphism $\psi^{+}_{1}$ between the cohomology groups is induced by that of (\ref{e.422}).
From Theorem \ref{t.16}, (\ref{6.21}), (\ref{6.18}) and (\ref{6.19}), we get (\ref{6.20}).
\end{proof}

Apply Theorem \ref{t.15} to the double complex (\ref{e.423}), we get the following lemma.
\begin{lemma}\label{l.41}The following identity holds in $Q^{S}/Q^{S, 0}$
\begin{align}\begin{aligned}\label{6.59}
&T_{f}(A^{C^{\bullet}(W_{\overline{Z_{2}}}^{u}, \overline{F})^{-}}, h^{C^{\bullet}(W_{\overline{Z_{2}}}^{u}, \overline{F})^{-}})
-\widetilde{f}\big(\nabla^{H^{\bullet}(\overline{Z_{2}}, \overline{F})^{-}}, h_{C^{\bullet}(W_{\overline{Z_{2}}}^{u}, \overline{F})^{-}}^{H^{\bullet}(\overline{Z_{2}}, \overline{F})^{-}}, h_{L^{2}}^{H^{\bullet}(\overline{Z_{2}}, \overline{F})^{-}}\big)\\
=&T_{f}(A^{C^{\bullet}(W^{u}_{Z_{2}}/W_{Y}^{u}, F)}, h^{C^{\bullet}(W^{u}_{Z_{2}}/W_{Y}^{u}, F)})\\
&-\widetilde{f}\big(\nabla^{H^{\bullet}(Z_{2}, Y, F)}, h_{C^{\bullet}(W^{u}_{Z_{2}}/W_{Y}^{u}, F)}^{H^{\bullet}(Z_{2}, Y, F)}, h_{L^{2}}^{H^{\bullet}(Z_{2}, Y, F)}\big)
+T_{f}(A^{{'E}^{-}_{1}}, h^{{'E}^{-}_{1}}_{L^{2}}).
\end{aligned}\end{align}
\end{lemma}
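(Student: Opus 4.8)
The plan is to argue in close parallel with the proof of Lemma~\ref{l.40}, replacing the double complex (\ref{e.422}) by (\ref{e.423}); the one structural difference is that here the horizontal map $\psi^{-}_{2}$ is isometric, so the extra term $T_{f}(A^{{''E}^{+}_{0}}_{0},h^{{''E}^{+}_{0}})$ that survives in (\ref{6.20}) for $M_{1}$ will be absent.

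First I would record the two spectral sequences of (\ref{e.423}). Since $\psi^{-}_{2}$ identifies each row of (\ref{e.423}) with a short exact sequence
\[
0\longrightarrow C^{\bullet}(W^{u}_{Z_{2}}/W^{u}_{Y},F)\otimes\mathbb{C}^{-}\overset{\psi^{-}_{2}}{\longrightarrow}C^{\bullet}(W^{u}_{\overline{Z_{2}}},\overline{F})^{-}\longrightarrow 0 ,
\]
the total cohomology vanishes, $H^{\bullet}(C^{\bullet})=0$, and for the filtration (\ref{e.372}) one has ${''E}^{-}_{1}={{''E}}^{-}_{\infty}=0$. For the filtration (\ref{e.371}) the ${'E}^{-}_{0}$-differential is $\widetilde{\partial}$, so $({'E}^{-}_{1})^{\cdot, q}$ is the short exact sequence of cohomology bundles
\[
0\longrightarrow H^{q}(Z_{2},Y,F)\overset{\psi^{-}_{2}}{\longrightarrow}H^{q}(\overline{Z_{2}},\overline{F})^{-}\longrightarrow 0,\qquad 0\le q\le m,
\]
induced by (\ref{e.423}), whence ${'E}^{-}_{2}={{'E}}^{-}_{\infty}=0$.

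Next I would compute the torsion form of the total complex, $T_{f}(A^{C^{\bullet}},h^{C^{\bullet}})$, in two ways by means of Theorem~\ref{t.15}. Because $\psi^{-}_{2}$ is isometric (the nontrivial assertion of Lemma~\ref{l.38}), the rows of (\ref{e.423}) are exact and split orthogonally, so, exactly as in (\ref{e.386}) and by the elementary computation underlying Lemma~\ref{l.49} (here $|\det\psi^{-}_{2}|=1$), one gets $T_{f}(A^{{''E}^{-}_{0}}_{0},h^{{''E}^{-}_{0}})=0$ via \cite[Thm. A1.1]{BL}. Feeding this, together with $H^{\bullet}(C^{\bullet})=0$ and ${''E}^{-}_{1}=0$, into Theorem~\ref{t.15} for the filtration (\ref{e.372}) yields $T_{f}(A^{C^{\bullet}},h^{C^{\bullet}})=0$; applying Theorem~\ref{t.15} for the filtration (\ref{e.371}) and using $H^{\bullet}(C^{\bullet})=0$, ${'E}^{-}_{2}=0$ then gives the analogue of (\ref{6.18}), namely
\[
0=T_{f}\big(A^{{'E}^{-}_{0}}_{0},h^{{'E}^{-}_{0}}\big)+T_{f}\big(A^{{'E}^{-}_{1}}_{1},h^{{'E}^{-}_{1}}\big).
\]

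Finally I would unwind these identities exactly as in the passage from (\ref{6.18}) to (\ref{6.20}): after accounting for the grading shift by the column index, $T_{f}(A^{{'E}^{-}_{0}}_{0},h^{{'E}^{-}_{0}})$ is the (signed) combination of the two column combinatorial torsion forms $T_{f}(A^{C^{\bullet}(W^{u}_{Z_{2}}/W^{u}_{Y},F)},h^{C^{\bullet}(W^{u}_{Z_{2}}/W^{u}_{Y},F)})$ and $T_{f}(A^{C^{\bullet}(W^{u}_{\overline{Z_{2}}},\overline{F})^{-}},h^{C^{\bullet}(W^{u}_{\overline{Z_{2}}},\overline{F})^{-}})$; and Theorem~\ref{t.16} applied to the short exact sequence $({'E}^{-}_{1})^{\cdot, q}$ above (whose cohomology ${'E}^{-}_{2}$ vanishes) replaces $h^{{'E}^{-}_{1}}$ by the $L^{2}$-metric $h^{{'E}^{-}_{1}}_{L^{2}}$ at the cost of the two $\widetilde{f}$-terms comparing the chain-induced metrics $h^{H^{\bullet}(Z_{2},Y,F)}_{C^{\bullet}(W^{u}_{Z_{2}}/W^{u}_{Y},F)}$ and $h^{H^{\bullet}(\overline{Z_{2}},\overline{F})^{-}}_{C^{\bullet}(W^{u}_{\overline{Z_{2}}},\overline{F})^{-}}$ with the corresponding $L^{2}$-metrics. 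Assembling these pieces produces (\ref{6.59}). The only point requiring care — and it is pure bookkeeping, identical to that in Lemma~\ref{l.40} — is tracking the signs and the grading shift in expressing $T_{f}(A^{{'E}^{-}_{0}}_{0},h^{{'E}^{-}_{0}})$ through the column torsion forms and matching the $\widetilde{f}$-terms with those in the statement; there is no analytic obstacle here, the whole content being the degeneration of the two spectral sequences together with $T_{f}(A^{{''E}^{-}_{0}}_{0},h^{{''E}^{-}_{0}})=0$.
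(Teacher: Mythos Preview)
Your proposal is correct and follows essentially the same route as the paper: apply Theorem~\ref{t.15} to the double complex (\ref{e.423}) for both filtrations, use the isometry of $\psi^{-}_{2}$ from Lemma~\ref{l.38} to kill $T_{f}(A^{{''E}^{-}_{0}}_{0},h^{{''E}^{-}_{0}})$, and then invoke Theorem~\ref{t.16} together with ${'E}^{-}_{2}=0$ to pass from $h^{{'E}^{-}_{1}}$ to $h^{{'E}^{-}_{1}}_{L^{2}}$ and produce the two $\widetilde{f}$-terms. The paper additionally records that $T_{f}(A^{{'E}^{-}_{1}}_{1},h^{{'E}^{-}_{1}})=0$ (since an isometric chain isomorphism induces an isometry on cohomology with the Hodge metrics), but this is a byproduct and not needed for the argument you outline.
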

\begin{proof}
 By (\ref{e.423}), we have
\begin{align}\begin{aligned}\label{6.60}
{''E}^{-}_{1}={''E}^{-}_{\infty}=0, \quad {'E}^{-}_{2}={'E}^{-}_{\infty}=0.
\end{aligned}\end{align}
By Theorem \ref{t.15}, we get in $Q^{S}/Q^{S, 0}$
\begin{align}\begin{aligned}\label{6.61}
T_{f}(A^{{''E}^{-}_{0}}_{0}, h^{{''E}^{-}_{0}})=T_{f}(A^{{'E}^{-}_{0}}_{0}, h^{{'E}^{-}_{0}})+T_{f}(A^{{'E}^{-}_{1}}_{1}, h^{{'E}^{-}_{1}}),
\end{aligned}\end{align}
where for $0\leq q\leq m$
\begin{align}\begin{aligned}\label{6.62}
({'E}^{-}_{1})^{\cdot, q}&:\quad 0\longrightarrow H^{q}(Z_{2}, Y, F)\overset{\psi_{2}^{-}}\longrightarrow H^{q}(\overline{Z_{2}}, \overline{F})^{-}\longrightarrow 0.
\end{aligned}\end{align}
In (\ref{6.62}), the isometric $\psi_{2}^{-}$ between the cohomology groups is induced by that of (\ref{e.423}) in Lemma \ref{l.38}. Since $\psi_{2}^{-}$ is an isometric and $({''E}^{-}_{0})^{\cdot, q}$, $({'E}^{-}_{1})^{\cdot, q}$ are exact, they split.
From \cite[Thm. A1.1]{BL}, it follows that
\begin{align}\begin{aligned}\label{6.63}
T_{f}(A^{{''E}^{-}_{0}}_{0}, h^{{''E}^{-}_{0}})=0, \quad T_{f}(A^{{'E}^{-}_{1}}_{1}, h^{{'E}^{-}_{1}})=0.
\end{aligned}\end{align}
From Theorem \ref{t.16}, (\ref{6.60}), (\ref{6.61}), (\ref{6.62}) and (\ref{6.63}), we get (\ref{6.59}).
\end{proof}

Next we try to compute the fourth term $T_{f}(A^{{''E}^{+}_{0}}_{0}, h^{{''E}^{+}_{0}})$ at the right side of (\ref{6.20}).

\begin{lemma}\label{l6.1} For $({''E}^{+}_{0},{''d}_{0})$, we have in $Q^{S}/Q^{S, 0}$
\begin{align}\begin{aligned}\label{6.22}
T_{f}(A^{{''E}^{+}_{0}}_{0}, h^{{''E}^{+}_{0}})=-\frac{\log 2}{2}\chi(Y)\rk(F).
\end{aligned}\end{align}
\end{lemma}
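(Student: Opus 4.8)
The plan is to identify the complex $({''E}^{+}_{0},{''d}_{0})$ explicitly and then apply Lemma~\ref{l.49} fiberwise. Recall that for the double complex (\ref{e.422}), the filtration (\ref{e.372}) gives as ${''E}^{+}_{0}$ the columns of (\ref{e.422}) equipped with the vertical differential $\widetilde{\overline{\partial}}$ (resp. $\widetilde{\partial}$); that is, for each $p$ we must compute the torsion form of the short exact sequence of flat vector bundles
\begin{align}\begin{aligned}\label{e.plan1}
0\longrightarrow C^{p}(W^{u}_{\overline{Z_{1}}},\overline{F})^{+}\overset{\psi^{+}_{1}}\longrightarrow C^{p}(W^{u}_{Z_{1}},F)\otimes\mathbb{C}^{+}\longrightarrow 0.
\end{aligned}\end{align}
This is an isomorphism of flat vector bundles, so it fits the framework of (\ref{e.780}) with $\tau=\psi^{+}_{1}$, and $T_{f}(A^{{''E}^{+}_{0}}_{0},h^{{''E}^{+}_{0}})=\sum_{p}(-1)^{p}T_{f}\big(A',h^{(\ref{e.plan1})}\big)$. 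By Lemma~\ref{l.49}, each summand equals $-\log|\det\psi^{+}_{1}|$ computed on $C^{p}$.

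First I would analyze $\psi^{+}_{1}$ fiberwise using (\ref{6.50}). A basis of $C^{p}(W^{u}_{\overline{Z_{1}}},\overline{F})^{+}$ is obtained from the $\mathbb{Z}_{2}$-action: critical points of $\overline{h_{1}}$ come in two types, those lying on $X=Y$ (fixed by $\phi$) and those coming in pairs $\{x,\phi(x)\}$ with $x\in Z_{1}$. For a pair $\{x,\phi(x)\}$, the $+$-eigenvector is $\tfrac{1}{\sqrt 2}(x+\phi(x))$ (up to the $F$ and orientation factors), and (\ref{6.50}) sends it, after the $\tfrac{\sqrt 2}{2}$ normalization, to a unit vector $x\otimes 1_{\mathbb{C}^{+}}$ in $C^{p}(W^{u}_{Z_{1}},F)$; so on this part $\psi^{+}_{1}$ is isometric. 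For a critical point $\mathfrak{a}^{*}$ on $Y$, the $+$-eigenvector is $\mathfrak{a}^{*}$ itself (norm $1$ in $\overline{Z_{1}}$), but (\ref{e.816}) gives $\psi^{+}_{1}(\mathfrak{a}^{*})=\sqrt 2\,\mathfrak{a}^{*}\otimes 1_{\mathbb{C}^{+}}$, a vector of norm $\sqrt 2$. Hence $|\det\psi^{+}_{1}|$ restricted to $C^{p}$ equals $(\sqrt 2)^{\#\{\text{index-}p\text{ critical points on }Y\}\cdot\rk(F)}$, i.e. $\log|\det\psi^{+}_{1}|\big|_{C^{p}}=\tfrac{1}{2}\log 2\cdot\rk(F)\cdot\#B^{p}_{\partial}$ where $B^{p}_{\partial}$ is the fiber of ${\bf B}^{p}_{\partial}={\bf B}^{p}\cap X$.

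Next I would assemble the alternating sum. Using $T_{f}(A^{{''E}^{+}_{0}}_{0},h^{{''E}^{+}_{0}})=-\sum_{p}(-1)^{p}\log|\det\psi^{+}_{1}|\big|_{C^{p}}=-\tfrac{1}{2}\log 2\,\rk(F)\sum_{p}(-1)^{p}\#B^{p}_{\partial}$, it remains to recognize $\sum_{p}(-1)^{p}\#B^{p}_{\partial}$ as $\chi(Y)$. This is the Morse-theoretic computation of the Euler characteristic: by our assumptions (\ref{e.634}), (\ref{e.636}) the restriction $h|_{X}$ is a fiberwise Morse function on $Y$ whose index-$p$ critical points are exactly the points of $B^{p}_{\partial}$ (the normal Hessian being positive by (\ref{e.636}), so the index on $Y$ coincides with the index on $Z$), and the Morse inequalities (equality for the alternating sum) give $\sum_{p}(-1)^{p}\#B^{p}_{\partial}=\chi(Y)$. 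Substituting yields (\ref{6.22}).

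The main obstacle is the careful bookkeeping of the normalization constants: one must track simultaneously the $\tfrac{\sqrt 2}{2}$ factor built into $\psi^{+}_{1}$ in (\ref{6.50}), the $\tfrac{1}{\sqrt 2}$ in the definition of the $+$-eigenvectors of the $\mathbb{Z}_{2}$-action, and the $\sqrt 2$ discrepancy (\ref{e.816}) occurring precisely at the critical points on $Y$, and confirm that these cancel on the "paired" part but leave a genuine factor $\sqrt 2$ on the "boundary" part. A secondary point requiring a line of justification is that the metric $h^{{''E}^{+}_{0}}$ used in the statement is indeed the one induced from $h^{F}$ on the Thom--Smale chain groups (so that Lemma~\ref{l.49} applies verbatim), together with the identification that the index of a boundary critical point for $h|_Y$ agrees with its index for $h|_Z$, which is exactly what (\ref{e.636}) guarantees.
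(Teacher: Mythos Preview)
Your proposal is correct and follows essentially the same approach as the paper: decompose the basis of $C^{p}(W^{u}_{\overline{Z_{1}}},\overline{F})^{+}$ into the ``paired'' and ``boundary'' parts, observe that $\psi^{+}_{1}$ is isometric on the former and scales by $\sqrt 2$ on the latter (equation~(\ref{e.816})), apply Lemma~\ref{l.49}, and sum using the Morse identity $\sum_{p}(-1)^{p}\#B^{p}_{\partial}=\chi(Y)$. The only point the paper makes more explicit is that, since Lemma~\ref{l.49} is literally stated for an \emph{automorphism} of a single flat bundle, it first introduces an isometric identification $\alpha$ of source and target (exactly your ``unit vector'' identification on the paired part), after which $\psi^{+}_{1}$ becomes the diagonal automorphism $\mathrm{diag}(\Id,\sqrt 2\,\Id)$---you carry this out implicitly when you compute $|\det\psi^{+}_{1}|$.
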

\begin{proof} By (\ref{e.422}) and Lemma \ref{l.40}, we have the exact sequence
\begin{align}\begin{aligned}\label{6.25}
({''E}^{+}_{0})^{\cdot, q}:\quad 0\longrightarrow C^{q}(W_{\overline{Z_{1}}}^{u}, \overline{F})^{+}\overset{\psi^{+}_{1}}\longrightarrow C^{q}(W^{u}_{Z_{1}}, F) \longrightarrow 0.
\end{aligned}\end{align}
For short, we use $V^{q}_{1}\subset C^{q}(W_{\overline{Z_{1}}}^{u}, \overline{F})^{+}$ to denote the subspace of $C^{q}(W_{\overline{Z_{1}}}^{u}, \overline{F})^{+}$ generated by critical points in $\overline{Z_{1}}\backslash Y$ and $V^{q}_{2}\subset C^{q}(W_{\overline{Z_{1}}}^{u}, \overline{F})^{+}$ to denote the subspace generated by critical points in $Y$. Since all the Hermitian metrics appearing here are induced by $h^{F}$, we have the orthogonal decomposition $C^{q}(W_{\overline{Z_{1}}}^{u}, \overline{F})^{+}=V^{q}_{1}\oplus V^{q}_{2}$.
We define a map $\alpha: V^{q}_{1}\oplus V^{q}_{2}\rightarrow C^{q}(W^{u}_{Z_{1}}, F)$ by
\begin{align}\begin{aligned}\label{e.795}
\alpha(x)=\left\{
\begin{array}{cc}
  \psi^{+}_{1}(x) & \text{for}\quad x\in V^{q}_{1}, \\
  x &  \text{for}\quad x\in V^{q}_{2}.
\end{array}
\right.
 \end{aligned}\end{align}
By \cite[(2.10), (2.11)]{BruMa12} and (\ref{6.50}), we see that $\alpha$ is an isometric isomorphism. If we use $\alpha$ to identify $V^{q}_{1}\oplus V^{q}_{2}$ with $C^{q}(W_{Z_{1}}^{u}, F)$, then the exact sequence (\ref{6.25}) is equivalent to
\begin{align}\begin{aligned}\label{e.796}
\mathscr{V}_{q}: \quad 0\rightarrow V^{q}_{1}\oplus V^{q}_{2}\overset{\widetilde{\psi^{+}_{1}}}\rightarrow V^{q}_{1}\oplus V^{q}_{2}\rightarrow 0,\quad \text{with}\quad \widetilde{\psi^{+}_{1}}=\left(
                                            \begin{array}{cc}
                                              \Id|_{V^{q}_{1}} & 0 \\
                                              0 & \sqrt{2}\Id|_{V^{q}_{2}} \\
                                            \end{array}
                                          \right).
 \end{aligned}\end{align}

By Lemma \ref{l.49}, we get
in $Q^{S}/Q^{S, 0}$
\begin{align}\begin{aligned}\label{6.24}
T_{f}&(A^{{''E}^{+}_{0}}_{0}, h^{{''E}^{+}_{0}})=\sum_{q=0}^{m}(-1)^{q}T_{f}(A^{({''E}^{+}_{0})^{\cdot, q}}_{0}, h^{({''E}^{+}_{0})^{\cdot, q}})
=\sum_{q=0}^{m}(-1)^{q}T_{f}(A^{\mathscr{V}_{q}},h^{\mathscr{V}_{q}})\\
&=-\frac{\log 2}{2}\sum_{q=0}^{m-1}(-1)^{q}\rk \Big(C^{q}(W^{u}_{Y}, F)\Big)=-\frac{\log 2}{2}\chi(Y)\rk(F),
\end{aligned}\end{align}
 by using the fact that $\sum_{q=0}^{m-1}(-1)^{q}\cdot
\rk\Big(C^{q}(W^{u}_{Y}, F)\Big)=\chi(Y)\rk(F)$ (cf. \cite[(2.17)]{BruMa12}. Now the proof of Lemma \ref{6.22} is completed.
\end{proof}

\subsection{Computations of $T_{f}(A^{{'E}^{+}_{1}}, h^{{'E}^{+}_{1}}_{L^{2}})$ in (\ref{6.20}) and $T_{f}(A^{{'E}^{-}_{1}}, h^{{'E}^{-}_{1}}_{L^{2}})$ in (\ref{6.59})}\label{ss2.7}
For $i=1, 2$, let $\mathscr{H}(Z_{i}, F)$ denote the space of harmonic forms on $Z_{i}$ satisfying the absolute boundary conditions,
 $\mathscr{H}(Z_{i}, Y, F)$ for that with the relative boundary conditions and $\mathscr{H}(\overline{Z_{i}}, \overline{F}_{i})$
for the space of harmonic forms on $\overline{Z_{i}}$.
As \cite[Prop. 2.1]{BruMa12}, we have a natural isometry of $\mathbb{Z}_{2}$-vector spaces with respect to $L^{2}-$metrics,
\begin{align}\begin{aligned}\label{6.70}
\widetilde{\phi}_{i}&:\quad \mathscr{H}(\overline{Z_{i}}, \overline{F})\longrightarrow \mathscr{H}(Z_{i}, F)\otimes 1_{\mathbb{C}^{+}}\oplus \mathscr{H}(Z_{i}, Y, F)\otimes 1_{\mathbb{C}^{-}}, \\
&\widetilde{\phi}_{i}(\sigma)=\frac{\sqrt{2}}{2}\cdot(\sigma+\phi^{*}_{i}\sigma)|_{Z_{i}} +\frac{\sqrt{2}}{2}\cdot(\sigma-\phi^{*}_{i}\sigma)|_{Z_{i}}.
\end{aligned}\end{align}
Let $P_{\infty}$ be the de Rham map (cf. \cite[(1.15)]{BruMa12}). For $\pi:M_{1}\rightarrow S$, the isometry $\widetilde{\phi}_{1}$ induces the isometry:
\begin{align}\begin{aligned}\label{6.71}
P_{\infty}\circ\widetilde{\phi}_{1}\circ (P_{\infty})^{-1}:\quad H^{\bullet}(\overline{Z_{1}}, \overline{F})^{+}\longrightarrow H^{\bullet}(Z_{1}, F)\otimes 1_{\mathbb{C}^{+}}.\end{aligned}\end{align}
For $\pi:M_{2}\rightarrow S$, the isometry $\widetilde{\phi}_{2}$ induces the isometry:
\begin{align}\begin{aligned}\label{6.72}
P_{\infty}\circ\widetilde{\phi}_{2}\circ (P_{\infty})^{-1}:\quad H^{\bullet}(\overline{Z_{2}}, \overline{F})^{-}\longrightarrow H^{\bullet}(Z_{2}, Y, F)\otimes 1_{\mathbb{C}^{-}}.
\end{aligned}\end{align}

For $\sigma_{1}\in H^{\bullet}(\overline{Z_{1}},\overline{F}),\, \sigma_{2}\in H^{\bullet}(Z_{2},Y,F)$, we have

\begin{align}\begin{aligned}\label{e.817}
&P_{\infty}\circ\widetilde{\phi}_{1}\circ (P_{\infty})^{-1}(\sigma_{1})|_{C_{\bullet}(W^{u}_{Z_{1}},F^{*})}=
\frac{\sqrt{2}}{2}\big(\sigma_{1}+\phi^{*}\sigma_{1}\big)|_{C_{\bullet}(W^{u}_{Z_{1}},F^{*})}=\psi^{+}_{1}\sigma_{1}|_{C_{\bullet}(W^{u}_{Z_{1}},F^{*})},\\
&P_{\infty}\circ\widetilde{\phi}_{2}\circ (P_{\infty})^{-1}\circ \psi^{-}_{2}(\sigma_{2}\otimes 1_{\mathbb{C}^{-}})|_{C_{\bullet}(W^{u}_{Z_{2}}/W^{u}_{Y},\overline{F}^{*})}=\sigma_{2}|_{C_{\bullet}(W^{u}_{Z_{2}}/W^{u}_{Y},\overline{F}^{*})}.
\end{aligned}\end{align}

\begin{lemma}\label{l.55} We have the following identities in $Q^{S}/Q^{S, 0}$
\begin{subequations}
\begin{align}
T_{f}(A^{{'E}^{+}_{1}}, h^{{'E}^{+}_{1}}_{L^{2}})=0,\label{6.85}\\
T_{f}(A^{{'E}^{-}_{1}}, h^{{'E}^{-}_{1}}_{L^{2}})=0.\label{6.91}
\end{align}
\end{subequations}
\end{lemma}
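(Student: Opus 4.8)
The plan is to identify the spectral sequence terms ${'E}^{\pm}_{1}$ explicitly as two-term complexes of flat vector bundles and then apply the computation of Lemma \ref{l.49}. Recall from (\ref{6.19}) that for each $0\leq q\leq m$ the complex $({'E}^{+}_{1})^{\cdot,q}$ is the short exact sequence
$$
0\longrightarrow H^{q}(\overline{Z_{1}},\overline{F})^{+}\overset{\psi^{+}_{1}}\longrightarrow H^{q}(Z_{1},F)\longrightarrow 0,
$$
where $\psi^{+}_{1}$ on cohomology is the isomorphism induced by the chain map (\ref{e.422}); similarly (\ref{6.62}) exhibits $({'E}^{-}_{1})^{\cdot,q}$ as
$$
0\longrightarrow H^{q}(Z_{2},Y,F)\overset{\psi^{-}_{2}}\longrightarrow H^{q}(\overline{Z_{2}},\overline{F})^{-}\longrightarrow 0.
$$
The torsion form $T_{f}(A^{{'E}^{\pm}_{1}},h^{{'E}^{\pm}_{1}}_{L^{2}})$ is, by definition (\ref{e.771}) and the additivity over the $\mathbb{Z}$-grading, the alternating sum $\sum_{q=0}^{m}(-1)^{q}T_{f}$ of the torsion forms of these two-term complexes, computed with the $L^{2}$-metrics. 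Each such two-term complex is of the shape (\ref{e.780}) with $\tau$ the induced isomorphism on cohomology, so Lemma \ref{l.49} applies and gives $T_{f}=-\log|\det\tau|$ for each $q$.

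The key point is then that $\tau$ is \emph{isometric} with respect to the relevant $L^{2}$-metrics, so $|\det\tau|=1$ and each contribution vanishes. For the minus case this is immediate: by Lemma \ref{l.38} the chain map $\psi^{-}_{2}$ is isometric, hence so is the induced map on $L^{2}$-cohomology, which by (\ref{6.72}) is exactly $P_{\infty}\circ\widetilde{\phi}_{2}\circ(P_{\infty})^{-1}$, an isometry by (\ref{6.70}). Thus (\ref{6.91}) follows at once from Lemma \ref{l.49}. For the plus case one must be slightly more careful, since Lemma \ref{l.38} tells us $\psi^{+}_{1}$ is \emph{not} isometric at the chain level. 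However, the torsion form ${'E}^{+}_{1}$ is computed with the $L^{2}$-metrics on cohomology, and on cohomology the relevant map is $P_{\infty}\circ\widetilde{\phi}_{1}\circ(P_{\infty})^{-1}$ as recorded in (\ref{6.71}) and (\ref{e.817}); by (\ref{6.70}) the map $\widetilde{\phi}_{1}$ is an isometry of $L^{2}$-spaces, and $P_{\infty}$ intertwines the $L^{2}$-metric on harmonic forms with the $L^{2}$-metric on cohomology. Hence the induced isomorphism $H^{q}(\overline{Z_{1}},\overline{F})^{+}\to H^{q}(Z_{1},F)$ is isometric for the $L^{2}$-metrics, its determinant has absolute value $1$, and Lemma \ref{l.49} yields $T_{f}((' E^{+}_{1})^{\cdot,q},h^{{'E}^{+}_{1}}_{L^{2}})=0$ for every $q$; summing gives (\ref{6.85}).

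I expect the main (minor) obstacle to be the plus case: one has to be scrupulous about \emph{which} metric is being used. The chain-level non-isometry of $\psi^{+}_{1}$ (the $\sqrt2$ factor on generators supported in $Y$, cf. (\ref{e.816})) is precisely what produced the $\tfrac{\log 2}{2}\chi(Y)\rk(F)$ term in Lemma \ref{l6.1}; here, by contrast, the torsion form carries the subscript $L^{2}$, so it is the $L^{2}$-metric on the \emph{cohomology} bundles that enters, and with respect to that metric the comparison isomorphism is genuinely isometric by (\ref{6.70})--(\ref{6.71}). Once this is pinned down, the proof is a one-line application of Lemma \ref{l.49} (with $|\det\tau|=1$) together with the alternating-sum formula for the torsion of a $\mathbb{Z}$-graded two-term complex.
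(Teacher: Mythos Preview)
Your treatment of the plus case is correct and matches the paper's: both use the first line of (\ref{e.817}) to identify the induced map $\psi^{+}_{1}$ on cohomology with the $L^{2}$-isometry $P_{\infty}\circ\widetilde{\phi}_{1}\circ P_{\infty}^{-1}$, and then invoke Lemma \ref{l.49} (or \cite[Thm.~A1.1]{BL}).

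In the minus case your argument has two slips. First, ``$\psi^{-}_{2}$ is isometric at the chain level, hence so is the induced map on $L^{2}$-cohomology'' is a non sequitur: the chain-level metric on the Thom--Smale complex is the one induced by $h^{F}$, whereas the $L^{2}$-metric on cohomology comes from harmonic differential forms via Hodge theory, and an isometry for one need not be an isometry for the other. Second, the induced map $\psi^{-}_{2}$ on cohomology is not $P_{\infty}\circ\widetilde{\phi}_{2}\circ P_{\infty}^{-1}$ but its \emph{inverse}: the second line of (\ref{e.817}) reads $(P_{\infty}\circ\widetilde{\phi}_{2}\circ P_{\infty}^{-1})\circ\psi^{-}_{2}=\Id$. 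With this correction your direct approach does work --- $\psi^{-}_{2}$ on cohomology is the inverse of an $L^{2}$-isometry, hence itself an $L^{2}$-isometry, and Lemma \ref{l.49} (or the splitting criterion) gives (\ref{6.91}). The paper instead argues the minus case by composing $({'E}^{-}_{1})^{\cdot,q}$ with the sequence $\mathscr{P}^{q}_{2}$ coming from (\ref{6.72}), observing via (\ref{e.817}) that the composite map is the identity, and then unwinding with Lemma \ref{l.36}; this is more roundabout than your corrected argument but amounts to the same thing.
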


\begin{proof}
By the first equation of (\ref{e.817}), the isomorphisms $P_{\infty}\circ\widetilde{\phi}_{1}\circ (P_{\infty})^{-1}$ and $\psi^{+}_{1}$
are the same maps from $H^{\bullet}(\overline{Z_{1}}, \overline{F})^{+}$ to $H^{\bullet}(Z_{1}, F)$, hence $\psi_{1}^{+}$ is isometric with
respect to the $L^{2}-$metrics. This means that the exact sequences $\big(({'E^{+}_{1}})^{\cdot,q},h^{({'E^{+}_{1}})^{\cdot,q}}_{L^{2}}\big),\,0\leq q \leq m$ (see (\ref{6.19})) split. Then (\ref{6.85}) follows from \cite[Thm. A1.1]{BL} or Lemma \ref{l.49}.

Next we try to compute the term $T_{f}(A^{{'E}^{-}_{1}}, h^{{'E}^{-}_{1}}_{L^{2}})$ appearing in (\ref{6.59}) of Lemma \ref{l.41}. In this case
${'E}^{-}_{1}$ represents the exact sequence induced by the horizontal lines of diagram (\ref{e.423}):
\begin{align}\begin{aligned}\label{6.86}
({'E}^{-}_{1})^{\cdot,q}:\quad 0\longrightarrow H^{q}(Z_{2}, Y, F)\overset{\psi_{2}^{-}}\longrightarrow H^{q}(\overline{Z_{2}}, \overline{F})^{-}\longrightarrow 0.\end{aligned}\end{align}
We use a new notation to represent the exact sequence (\ref{6.72}), which splits with respect to $L^{2}-$metric, by
\begin{align}\begin{aligned}\label{6.87}
\mathscr{P}^{q}_{2}: 0\longrightarrow H^{q}(\overline{Z_{2}}, \overline{F})^{-}\overset{P_{\infty}\circ\widetilde{\phi}_{2}\circ (P_{\infty})^{-1}}{\longrightarrow} H^{q}(Z_{2}, Y, F)\longrightarrow 0.
\end{aligned}\end{align}
 Then the composition of (\ref{6.86}) and (\ref{6.87}) yields
\begin{align}\begin{aligned}\label{6.88}
\mathscr{O}^{q}_{2}=({'E}^{-}_{1})^{\cdot,q}\circ\mathscr{P}^{q}_{2}:0\longrightarrow H^{q}(\overline{Z_{2}}, \overline{F})^{-}\overset{w}\longrightarrow H^{q}(\overline{Z_{2}}, \overline{F})^{-}\longrightarrow 0,
\end{aligned}\end{align}
where $w=\psi_{2}^{-}\circ P_{\infty}\circ\widetilde{\phi}_{2}\circ P_{\infty}^{-1}$ is an identity map by the second equation of (\ref{e.817}).
As the sequences (\ref{6.87}) and (\ref{6.88}) split, by \cite[Thm. A1.1]{BL} or Lemma \ref{l.49} we get
\begin{align}\begin{aligned}\label{6.89}
T_{f}(A^{\mathscr{P}^{q}_{2}}, h^{\mathscr{P}^{q}_{2}}_{L^{2}})=0, \quad T_{f}(A^{\mathscr{O}^{q}_{2}}, h^{\mathscr{O}^{q}_{2}}_{L^{2}})=0.\end{aligned}\end{align}
By Lemma \ref{l.36} we find that
\begin{align}\begin{aligned}\label{6.90}
T_{f}(A^{\mathscr{O}^{q}_{2}}, h^{\mathscr{O}^{q}_{2}}_{L^{2}})=T_{f}(A^{\mathscr{P}^{q}_{2}}, h^{\mathscr{P}^{q}_{2}}_{L^{2}})+T_{f}(A^{({'E}^{-}_{1})^{\cdot,q}}, h^{({'E}^{-}_{1})^{\cdot,q}}_{L^{2}}),
\end{aligned}\end{align}
then from (\ref{6.86}), (\ref{6.89}) and (\ref{6.90}) we get (\ref{6.91}).
\end{proof}

\subsection{Comparison of analytic torsion forms and combinatorial torsion forms}\label{ss2.8}
By \cite[Thm. 0.1]{BGo}, the following identity holds in $Q^{S}/Q^{S, 0}$, for $i=1, 2$,
\begin{align}\begin{aligned}\label{6.7}
\mathscr{T}_{g}&(T^{H}\overline{M}_{i}, g^{T\overline{Z_{i}}}, h^{\overline{F}_{i}})-T_{f, g}\big(A^{C^{\bullet}(W^{u}_{\overline{Z_{i}}}, \overline{F}_{i})}, h^{C^{\bullet}(W^{u}_{\overline{Z_{i}}}, \overline{F}_{i})}\big)\\
&+\widetilde{f}_{g}\big(\nabla^{H^{\bullet}(\overline{Z_{i}}, \overline{F}_{i})}, h_{C^{\bullet}(W^{u}_{\overline{Z_{i}}}, \overline{F}_{i})}^{H^{\bullet}(\overline{Z_{i}}, \overline{F}_{i})}, h_{L^{2}}^{H^{\bullet}(\overline{Z_{i}}, \overline{F}_{i})}\big)\\
=&-\int_{\overline{Z}_{i, g}}f_{g}(\nabla^{\overline{F}}, h^{\overline{F}})(\nabla h)^{*}\psi(T\overline{Z}_{i, g}, \nabla^{T\overline{Z}_{i, g}})\\
&+\sum_{x\in \overline{B}_{i, g}}(-1)^{\text{ind}(x)}\tr ^{\overline{F}_{x}\otimes \overline{o}^{u}_{x}}[g]\cdot{^{0}I_{g}(T_{x}\overline{Z_{i}}|_{\overline{\mathbf{B}}_{i, g}})},
\end{aligned}\end{align}
where $g\in \mathbb{Z}_{2}=\{1, \phi_{i}\}$ and ${^{0}I_{g}}$ is an additive class (cf. \cite[(0.20)]{BGo}).
For the third term of the left side of the identity (\ref{6.7}), we have the following identity from the decomposition of
$H^{\bullet}(\overline{Z_{i}}, \overline{F}_{i})$ into the eigen-subbundles with respect to the action of the involution $\phi_{i}$, i.e.,
\begin{align}\begin{aligned}\label{6.9}
\widetilde{f}_{g}\big(\nabla^{H^{\bullet}(\overline{Z_{i}}, \overline{F}_{i})},& h_{C^{\bullet}(W^{u}_{\overline{Z_{i}}}, \overline{F}_{i})}^{H^{\bullet}(\overline{Z_{i}}, \overline{F}_{i})}, h_{L^{2}}^{H^{\bullet}(\overline{Z_{i}}, \overline{F}_{i})}\big)\\
=\widetilde{f}\big(\nabla^{H^{\bullet}(\overline{Z_{i}}, \overline{F}_{i})^{+}},& h_{C^{\bullet}(W^{u}_{\overline{Z_{i}}}, \overline{F}_{i})^{+}}^{H^{\bullet}(\overline{Z_{i}}, \overline{F}_{i})^{+}}, h_{L^{2}}^{H^{\bullet}(\overline{Z_{i}}, \overline{F}_{i})^{+}}\big)\\
&+\chi(g)\widetilde{f}\big(\nabla^{H^{\bullet}(\overline{Z_{i}}, \overline{F}_{i})^{-}}, h_{C^{\bullet}(W^{u}_{\overline{Z_{i}}}, \overline{F}_{i})^{-}}^{H^{\bullet}(\overline{Z_{i}}, \overline{F}_{i})^{-}}, h_{L^{2}}^{H^{\bullet}(\overline{Z_{i}}, \overline{F}_{i})^{-}}\big).
\end{aligned}\end{align}

Recall that $N$ denotes the normal bundle of $X\subset M$. We have
$TZ|_{X}=N\oplus TY$, so we get $TZ|_{\mathbf{B}\cap X}=N|_{\mathbf{B}\cap X}\oplus TY|_{\mathbf{B}\cap X}$ regarded as vector bundles over $S$ as direct image of $\pi: \mathbf{B}\cap X\rightarrow S$. Since ${^{0}I_{g}}$ is an additive class (cf. \cite[(0.20)]{BGo}), we get
\begin{align}\begin{aligned}\label{e.756}
{^{0}I_{g}}(TZ|_{\mathbf{B}\cap X})={^{0}I_{g}}(N|_{\mathbf{B}\cap X})+{^{0}I_{g}}(TY|_{\mathbf{B}\cap X}).
\end{aligned}\end{align}
(Since $N|_{\mathbf{B}\cap X}$ is a trivial line bundle over $S$, the cohomology class of ${^{0}I_{g}}(N|_{\mathbf{B}\cap X})$ vanishes.) From (\ref{6.3}), (\ref{6.6}), (\ref{6.7}) and (\ref{6.9}), we get for $Z\mathrm{}=Z_{1}\cup_{Y} Z_{2}$
\begin{align}\begin{aligned}\label{6.10}
\mathscr{T}&(T^{H}M, g^{TZ}, h^{F})-T_{f}(A^{C^{\bullet}(W^{u}_{Z}, F)}, h^{C^{\bullet}(W^{u}_{Z}, F)})\\
&+\widetilde{f}\big(\nabla^{H^{\bullet}(Z, F)}, h_{C^{\bullet}(W^{u}, F)}^{H^{\bullet}(Z, F)}, h_{L^{2}}^{H^{\bullet}(Z, F)}\big)\\
=&-\int_{Z}f(\nabla^{F}, h^{F})(\nabla h)^{*}\psi(TZ, \nabla^{TZ})+\sum_{x\in B}(-1)^{\text{ind}(x)}\rk(F)\cdot {^{0}I(T_{x}Z|_{\mathbf{B}})},
\end{aligned}\end{align}
\begin{align}\begin{aligned}\label{6.11}
&\mathscr{T}_{{\rm abs}}(T^{H}M_{1}, g^{TZ_{1}}, h^{F})
-T_{f}(A^{C^{\bullet}(W_{\overline{Z_{1}}}^{u}, \overline{F})^{+}}, h^{C^{\bullet}(W_{\overline{Z_{1}}}^{u}, \overline{F})^{+}})\\
&+\widetilde{f}\big(\nabla^{H^{\bullet}(W_{\overline{Z_{1}}}^{u}, \overline{F})^{+}}, h_{C^{\bullet}(W_{\overline{Z_{1}}}^{u}, \overline{F})^{+}}^{H^{\bullet}(\overline{Z_{1}}, \overline{F})^{+}}, h_{L^{2}}^{H^{\bullet}(\overline{Z_{1}}, \overline{F})^{+}}\big)\\
=&-\int_{Z_{1}}f(\nabla^{F}, h^{F})(\nabla f)^{*}\psi(TZ_{1}, \nabla^{TZ_{1}})+\frac{1}{2}\rk(F)\sum_{x\in \overline{B}_{1}}(-1)^{\text{ind}(x)} \cdot{^{0}I(T_{x}\overline{Z_{1}}|_{\overline{\mathbf{B}}_{1}})}\\
&-\frac{1}{2}\int_{Y}f(\nabla^{\overline{F}}, h^{\overline{F}})(\nabla f)^{*}\psi(TY, \nabla^{TY})+\frac{1}{2}\rk(F)\sum_{x\in B_{Y}}(-1)^{\text{ind}(x)} \cdot{^{0}I(T_{x}Y|_{\mathbf{B}\cap X})}\\
&+\frac{1}{2}\rk(F)\sum_{x\in B_{Y}}(-1)^{\text{ind}(x)}\cdot {^{0}I_{\phi}(N_{x}|_{\mathbf{B}\cap X})},
\end{aligned}\end{align}
and
\begin{align}\begin{aligned}\label{6.12}
&\mathscr{T}_{{\rm rel}}(T^{H}M_{2}, g^{TZ_{2}}, h^{F})-
T_{f}(A^{C^{\bullet}(W_{\overline{Z_{2}}}^{u}, \overline{F})^{-}}, h^{C^{\bullet}(W_{\overline{Z_{2}}}^{u}, \overline{F})^{-}})\\
&\qquad\qquad+\widetilde{f}\big(\nabla^{H^{\bullet}(\overline{Z_{2}}, \overline{F})^{-}}, h_{C^{\bullet}(W_{\overline{Z_{2}}}^{u}, \overline{F})^{-}}^{H^{\bullet}(\overline{Z_{2}}, \overline{F})^{-}}, h_{L^{2}}^{H^{\bullet}(\overline{Z_{2}}, \overline{F})^{-}}\big)\\
=&-\int_{Z_{2}}f(\nabla^{F}, h^{F})(\nabla f)^{*}\psi(TZ_{2}, \nabla^{TZ_{2}})+\frac{1}{2}\rk(F)\sum_{x\in \overline{B}_{2}}(-1)^{\text{ind}(x)} \cdot{^{0}I(T_{x}\overline{Z_{2}}|_{\overline{\mathbf{B}}_{2}})}\\
&+\frac{1}{2}\int_{Y}f(\nabla^{\overline{F}}, h^{\overline{F}})(\nabla f)^{*}\psi(TY, \nabla^{TY})-\frac{1}{2}\rk(F)\sum_{x\in B_{Y}}(-1)^{\text{ind}(x)} \cdot{^{0}I(T_{x}Y|_{\mathbf{B}\cap X})}\\
&-\frac{1}{2}\rk(F)\sum_{x\in B_{Y}}(-1)^{\text{ind}(x)}\cdot {^{0}I_{\phi}(N_{x}|_{\mathbf{B}\cap X})}.
\end{aligned}\end{align}
From (\ref{e.756}), (\ref{6.10}), (\ref{6.11}) and (\ref{6.12}), we get
\begin{align}\begin{aligned}\label{6.13}
&\mathscr{T}(T^{H}M, g^{TZ}, h^{F})-\mathscr{T}_{{\rm abs}}(T^{H}M_{1}, g^{TZ_{1}}, h^{F})-\mathscr{T}_{{\rm rel}}(T^{H}M_{2}, g^{TZ_{2}}, h^{F})\\
=&T_{f}(A^{C^{\bullet}(W^{u}_{Z}, F)}, h^{C^{\bullet}(W^{u}_{Z}, F)})
-T_{f}(A^{C^{\bullet}(W_{\overline{Z_{1}}}^{u}, \overline{F})^{+}}, h^{C^{\bullet}(W_{\overline{Z_{1}}}^{u}, \overline{F})^{+}})\\
&-T_{f}(A^{C^{\bullet}(W_{\overline{Z_{2}}}^{u}, \overline{F})^{-}}, h^{C^{\bullet}(W_{\overline{Z_{2}}}^{u}, \overline{F})^{-}})
-\widetilde{f}\big(\nabla^{H^{\bullet}(Z, F)}, h_{C^{\bullet}(W^{u}_{Z}, F)}^{H^{\bullet}(Z, F)}, h_{L^{2}}^{H^{\bullet}(Z, F)}\big)\\
&+\widetilde{f}\big(\nabla^{H^{\bullet}(\overline{Z_{1}}, \overline{F})^{+}}, h_{C^{\bullet}(W_{\overline{Z_{1}}}^{u}, \overline{F})^{+}}^{H^{\bullet}(\overline{Z_{1}}, \overline{F})^{+}}, h_{L^{2}}^{H^{\bullet}(\overline{Z_{1}}, \overline{F})^{+}}\big)\\
&+\widetilde{f}\big(\nabla^{H^{\bullet}(\overline{Z_{2}}, \overline{F})^{-}}, h_{C^{\bullet}(W_{\overline{Z_{2}}}^{u}, \overline{F})^{-}}^{H^{\bullet}(\overline{Z_{2}}, \overline{F})^{-}}, h_{L^{2}}^{H^{\bullet}(\overline{Z_{2}}, \overline{F})^{-}}\big).
\end{aligned}\end{align}
Recall that ${'E}_{0}$ is defined in (\ref{e.375}) and ${'E}_{1}$ in (\ref{e.376}), hence we have
 \begin{align}\begin{aligned}\label{6.94}
 T_{f}\big(A^{{'E}_{0}}_{0}, h^{{'E}_{0}}\big)=&T_{f}(A^{C^{\bullet}(W^{u}_{Z_{2}}/W_{Y}^{u}, F)}, h^{C^{\bullet}(W^{u}_{Z_{2}}/W_{Y}^{u}, F)})\\
&-T_{f}(A^{C^{\bullet}(W^{u}_{Z}, F)}, h^{C^{\bullet}(W^{u}_{Z}, F)})+T_{f}(A^{C^{\bullet}(W^{u}_{Z_{1}}, F)}, h^{C^{\bullet}(W^{u}_{Z_{1}}, F)}),
 \end{aligned}\end{align}
and
 \begin{align}\begin{aligned}\label{6.95}
 \widetilde{f}\big(\nabla^{{'E}_{1}}, h^{{'E}_{1}}_{L^{2}}, h^{{'E}_{1}}\big)=&
 \widetilde{f}\big(\nabla^{H^{\bullet}(Z, F)}, h_{C^{\bullet}(W^{u}_{Z}, F)}^{H^{\bullet}(Z, F)}, h_{L^{2}}^{H^{\bullet}(Z, F)}\big)\\
&-\widetilde{f}\big(\nabla^{H^{\bullet}(Z_{1}, F)}, h_{C^{\bullet}(W^{u}_{Z_{1}}, F)}^{H^{\bullet}(Z_{1}, F)}, h_{L^{2}}^{H^{\bullet}(Z_{1}, F)}\big)\\
&-\widetilde{f}\big(\nabla^{H^{\bullet}(Z_{2}, Y, F)}, h_{C^{\bullet}(W^{u}_{Z_{2}}/W_{Y}^{u}, F)}^{H^{\bullet}(Z_{2}, Y, F)}, h_{L^{2}}^{H^{\bullet}(Z_{2}, Y, F)}\big).
\end{aligned}\end{align}
By Lemmas \ref{l.40}, \ref{l.41}, \ref{l6.1}, \ref{l.55}, (\ref{6.13}), (\ref{6.94}) and (\ref{6.95}), we get the equation (\ref{5.11}).
The proof of Theorem \ref{t.17} is completed.

\bibliographystyle{plain}

\end{document}